\tikzset{->-/.style={decoration={markings,mark=at position #1 with {\arrow{>}}},postaction={decorate}}}
\definecolor{red}{rgb}{1,0,0} 
 \definecolor{darkgreen}{rgb}{0, .7, 0}
 \definecolor{purple}{rgb}{.7, 0, 1}
\newcommand{\Z}{{\mathbb{Z}}}
\newcommand{\Q}{{\mathbb{Q}}}
\newcommand{\R}{{\mathbb{R}}}
\newcommand{\C}{{\mathbb{C}}}
\tikzset{mynode/.style={draw,circle,fill=black,inner sep=2pt,outer sep=0.5pt}}
\newtheorem{theorem}{Theorem}[section]
\newtheorem*{theorem*}{Theorem}
\newtheorem*{lemma*}{Lemma}
\newtheorem{proposition}[theorem]{Proposition}
\newtheorem{lemma}[theorem]{Lemma}
\newtheorem{corollary}[theorem]{Corollary}
\theoremstyle{definition}
\newtheorem{definition}[theorem]{Definition}
\newtheorem{examples}[theorem]{Examples}
\theoremstyle{remark}
\newtheorem{remark}[theorem]{Remark}
\begin{document}
\title{Higher dimensional algebraic fiberings of group extensions}
\author{Dessislava H. Kochloukova}
\address
{Department of Mathematics, State University of Campinas (UNICAMP), 13083-859, Campinas, SP, Brazil }
\email{desi@unicamp.br}

\author{Stefano Vidussi}
\address{Department of Mathematics, University of California, Riverside, CA 92521, USA\\} 
\email{ svidussi@ucr.edu}
\keywords{Algebraic fibering, Coherence, Property $FP_m$, Bieri-Neumann-Strebel-Renz invariants}

\maketitle

\begin{abstract} We prove some conditions for the existence of higher dimensional algebraic fibering of group extensions. This leads to various corollaries on incoherence of groups and some geometric examples of algebraic fibers of type $F_n$ but not $FP_{n+1}$ of some groups including  pure braid groups and families of poly-surface groups that are fundamental groups of complex projective varieties.
\end{abstract}

\section{Introduction}

In \cite[Theorem 1]{F-V} Friedl and the second author established a criterion for algebraic fibering of a group $G$ i.e. the existence of an epimorphism $G \to \mathbb{Z}$ with finitely generated kernel, that we refer to as {\em algebraic fiber}.  A special case of \cite[Theorem 1]{F-V} was earlier proved by Kropholler and Walsh in \cite{K-W}. The proof relies on the Bieri-Neumann-Strebel $\Sigma^1$-invariant. This invariant was first defined in dimension 1 but as there are higher dimensional homotopical and homological  versions we show higher dimensional versions of  \cite[Theorem 1]{F-V}.  There is an obstacle to the original argument in  \cite[Theorem 1]{F-V} since quotients of finitely presented groups or of type $FP_2$ are not necessary of the same type but we resolve the problem using a homotopical criterion due to Kuckuck \cite{K}  and its homological version due to the first author and Lima \cite{K-L}.

We recall that  a group $G$ is of type $F_m$ if there is a $K(G,1)$ complex with finite $m$-skeleton. Type $F_1$ is equivalent with the finite generation of $G$. Type $F_2$ is equivalent with the finite presentability of $G$.
 
 A group $G$ is of type $FP_m$ if the trivial $\mathbb{Z} G$-module $\mathbb{Z}$ has a projective resolution with all modules in dimension $\leq m$ being finitely generated. $FP_1$ is equivalent with $G$ being finitely generated, $FP_2$ is equivalent with the relation module (associated with a finite generating set of $G$) being finitely generated as $\mathbb{Z} G$-module via conjugation.
 A group of type $F_m$ is of type $FP_m$ but if $m \geq 2$ the converse does not hold, see \cite{B-B}.
 If $m \geq 2$ then a group $G$ is of type $F_m$ if and only if $G$ is $FP_m$ and finitely presented.

 We state below our main result. In the case $n_0 = 1$ it coincides with  \cite[Theorem 1]{F-V}. 
 
\begin{theorem} \label{Main1} Let $1 \to K \to G \to \Gamma \to 1$ be a short exact sequence of groups such that $G$ and $K$ are of type $F_{n_0}$ (resp. $FP_{n_0}$) and the abelianization $\Gamma^{ab} = \Gamma / \Gamma'$ is infinite. Suppose that $\phi \colon K \to \R$ is  a discrete character  with kernel $Ker(\phi) = N$ of type $F_{n_0-1}$ (resp. $FP_{n_0-1}$) such that  $\phi$  extends to a real homomorphism of $G$. Then there exists a discrete homomorphism $\psi \colon G \to \mathbb{R}$ that extends $\phi$ such that $M = Ker (\psi)$ is of type $F_{n_0}$ (resp. $FP_{n_0}$). Furthermore if $K$, $G$ and $N$ are of type $F_{\infty}$ (resp. $FP_{\infty}$) then $M$ can be chosen of type $F_{\infty}$ (resp. $FP_{\infty}$).
\end{theorem}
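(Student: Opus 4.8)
\emph{(Proof sketch / plan.)}
The plan is to translate the statement into the language of the Bieri–Neumann–Strebel–Renz invariants and then imitate the argument of \cite[Theorem 1]{F-V}, using the criteria of Kuckuck and of Kochloukova–Lima to compensate for the absence of any finiteness hypothesis on $\Gamma$. After rescaling $\phi$ — which affects neither the hypotheses nor the conclusion — we may assume $\phi$ is $\Z$-valued. Recall that, for $G$ of type $F_m$ (resp.\ $FP_m$) and a discrete character $\psi\colon G\to\R$, the kernel $\ker\psi$ is of type $F_m$ (resp.\ $FP_m$) if and only if both $[\psi]$ and $[-\psi]$ lie in $\Sigma^m(G)$ (resp.\ $\Sigma^m(G;\Z)$), and $\ker\psi$ is of type $F_\infty$ (resp.\ $FP_\infty$) if and only if $\{[\psi],[-\psi]\}\subseteq\bigcap_m\Sigma^m(G)$ (resp.\ the homological analogue). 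Dually, since $K$ is of type $F_{n_0-1}$ (resp.\ $FP_{n_0-1}$), the hypothesis that $N=\ker\phi$ is of type $F_{n_0-1}$ (resp.\ $FP_{n_0-1}$) is equivalent to $\{[\phi],[-\phi]\}\subseteq\Sigma^{n_0-1}(K)$ (resp.\ $\Sigma^{n_0-1}(K;\Z)$). So it suffices to produce a discrete $\psi\colon G\to\R$ with $\psi|_K=\phi$ and $\{[\psi],[-\psi]\}\subseteq\Sigma^{n_0}(G)$ (resp.\ $\Sigma^{n_0}(G;\Z)$); from now on I write the homotopical case, the homological one being identical.

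Second, I would isolate the space of candidate characters. As $\Gamma$ is finitely generated (a quotient of $G$) and $\Gamma^{ab}$ is infinite, $\mathrm{Hom}(\Gamma,\R)\neq 0$. Fix a real homomorphism $\tilde\phi\colon G\to\R$ extending $\phi$; then $A:=\tilde\phi+\pi^{*}\mathrm{Hom}(\Gamma,\R)$ is the affine subspace of all real extensions of $\phi$, of dimension $\geq 1$, and no member of $A$ vanishes on $K$. Since $\phi$ is a restriction of $\tilde\phi$ it vanishes on $[G,G]\cap K$, and since $\Q$ is divisible the restriction map $\mathrm{Hom}(G,\Q)\to\mathrm{Hom}(K,\Q)$ has image exactly the characters of $K$ vanishing on $[G,G]\cap K$; hence $\phi$ extends to a $\Q$-valued — so discrete — character $\psi_0$ of $G$. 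One checks that the discrete extensions of $\phi$ are exactly the members of $A_\Q:=\psi_0+\pi^{*}\mathrm{Hom}(\Gamma,\Q)$, a non-empty rational affine subspace dense in $A$. Finally, $\Sigma^{n_0}(G)$ is open in the character sphere $S(G)$, so $\{\psi\in A:\{[\psi],[-\psi]\}\subseteq\Sigma^{n_0}(G)\}$ is open in $A$; it therefore suffices to show it is non-empty and then to take a point of it lying in the dense set $A_\Q$.

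The heart of the matter is to find $\psi\in A$ with $\{[\psi],[-\psi]\}\subseteq\Sigma^{n_0}(G)$, and I would argue by induction on $n_0$, the base case $n_0=1$ being \cite[Theorem 1]{F-V}. In the inductive step, the theorem in dimension $n_0-1$ applies — its hypotheses hold, since $G$ and $K$ are of type $F_{n_0-1}$ and $N$ of type $F_{n_0-2}$ — and it yields that $U:=\{\psi\in A:\{[\psi],[-\psi]\}\subseteq\Sigma^{n_0-1}(G)\}$ is non-empty; $U$ is also open. For $\psi\in U$ the kernel $M=\ker\psi$ is of type $F_{n_0-1}$, satisfies $M\cap K=N$, and sits in a short exact sequence $1\to N\to M\to L\to 1$ with $L=MK/K$ a finite-index, hence finitely generated, subgroup of $\Gamma$. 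What remains is to promote $M$ from type $F_{n_0-1}$ to type $F_{n_0}$ (resp.\ $FP_{n_0}$) knowing only that $N$ is of type $F_{n_0-1}$ (resp.\ $FP_{n_0-1}$) and $L$ is finitely generated — exactly the point at which a naive argument would require, and cannot obtain, a higher finiteness property of the quotient $\Gamma$. This is supplied by the homotopical criterion of Kuckuck \cite{K} (resp.\ the homological criterion of Kochloukova–Lima \cite{K-L}), which certifies type $F_{n_0}$ (resp.\ $FP_{n_0}$) of such an extension by relating $M$ to auxiliary groups with the requisite higher finiteness — e.g.\ the fibre product of $G\to\Gamma$ with a finitely generated free group surjecting onto $\Gamma$, which replaces $\Gamma$ by a free, hence $F_\infty$, quotient — and transferring the conclusion back. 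Choosing $\psi$ inside $U$ so that the hypotheses of this criterion are met — this is where the positive dimension of $A$, i.e.\ the freedom to move $\psi$ in the direction of a nonzero element of $\mathrm{Hom}(\Gamma,\R)$, is used — cuts $U$ down to a non-empty open subset on which $\{[\psi],[-\psi]\}\subseteq\Sigma^{n_0}(G)$.

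I expect this third step to be the real obstacle: controlling $\Sigma^{n_0}(G)$ — equivalently the finiteness type of $M$ — with no finiteness assumption on $\Gamma=G/K$, which is precisely the difficulty flagged in the introduction; in particular the passage through the auxiliary group must be made to interact correctly with the $\Sigma$-invariants, not merely with the kernels of individual characters. For the last assertion, under the stronger hypotheses the argument runs uniformly in $m$: the $F_\infty$ (resp.\ $FP_\infty$) forms of the Kuckuck / Kochloukova–Lima criteria apply at every level with the same auxiliary data, so one may select a single $\psi\in A_\Q$ with $\{[\psi],[-\psi]\}\subseteq\bigcap_m\Sigma^m(G)$ (resp.\ the homological analogue), which gives $M=\ker\psi$ of type $F_\infty$ (resp.\ $FP_\infty$).
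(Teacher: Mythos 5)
You have the right high‑level strategy — re‑cast the conclusion in $\Sigma$‑invariant terms via Theorem~\ref{criterion}, exploit openness of $\Sigma^m$ (Theorem~\ref{open}), and invoke Proposition~\ref{Kuck} to compensate for the missing finiteness hypothesis on $\Gamma$ — and you correctly identify the fibre product of $G\to\Gamma$ with a free group $F_n\twoheadrightarrow\Gamma$ as the auxiliary group (this is the paper's $\Pi = \Pi_1 *_K \cdots *_K \Pi_n$, $\Pi_i = K\rtimes\langle s_i\rangle$). But the proposal stops precisely at the load‑bearing step that you yourself flag as ``the real obstacle'': Proposition~\ref{Kuck} needs as input a group $B_0$ already known to be of type $F_{n_0}$ with $N\hookrightarrow B_0$ and $\theta|_N=\mathrm{id}_N$, and nothing in your sketch supplies one. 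In the paper this is exactly where Lemma~\ref{gen-lemma} (the higher‑dimensional generalization of \cite[Lemma~2.2]{F-V}) comes in: one takes $\beta = \alpha + \mu\gamma$ on $\Pi$ for small rational $\mu>0$ and proves $[\pm\beta]\in\Sigma^{n_0}(\Pi)$ by an $(n-1)$‑fold application of the graph‑of‑groups results on $\Sigma$‑invariants (Theorem~\ref{SS}, Corollary~\ref{SSS}) to the Bass--Serre trees of the successive HNN steps, using $[\pm\beta|_K]=[\pm\phi]\in\Sigma^{n_0-1}(K)$ on the edge groups and $[\pm\beta_1]\in\Sigma^{n_0}(\Pi_1)$ (from openness, since $[\pm\alpha_1]\in\Sigma^{n_0}(\Pi_1)$ because $\ker\alpha_1=K$ is $F_{n_0}$). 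Only then does one know $B_0:=\ker\beta$ is $F_{n_0}$, which is what makes Proposition~\ref{Kuck} applicable with $B=\ker(b)$, $C=\overline{\ker(b)}\leq\Gamma$. Without this $\Sigma$‑invariant computation of $\Sigma^{n_0}(\Pi)$ the ``promote to $F_{n_0}$'' step is not established.

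Two secondary remarks. First, the paper does not induct on $n_0$; it establishes $[\pm\beta]\in\Sigma^{n_0}(\Pi)$ directly. Your induction is not incorrect in spirit, but you would have to argue that the $\psi$ produced by the inductive hypothesis (so that $\ker\psi$ is $F_{n_0-1}$) is one for which a suitable $B_0$ of type $F_{n_0}$ exists; the paper avoids this by constructing $\beta$, hence $\psi=\mu^{-1}b$, explicitly in one stroke. Second, you write that $L$ is merely ``finitely generated,'' but Proposition~\ref{Kuck} requires the quotient $C=L$ to be of type $F_{n_0}$; this does hold, because $\Gamma$ is $F_{n_0}$ (from $G$ being $F_{n_0}$ and $K$ being $F_{n_0-1}$ in the extension $1\to K\to G\to\Gamma\to1$) and $L$ has finite index in $\Gamma$ — a point the paper makes explicitly and which you should state.
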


There is a lot of freedom in the choice of  the map $\psi$ from Theorem \ref{Main1}. In fact $\psi$ can be chosen as any extension of $\phi$ that is  a rational point of a specific cone of characters. We discuss this fact in more details in Remark \ref{rem:cone}.

\medskip
We say that a group $G$ is $n$-coherent if any subgroup of $G$ that is of  type $F_n$ should be  of type $F_{n+1}$.  Thus a group is 1-coherent if it is coherent. We say that a group $G$ is homologically $n$-coherent if any subgroup of $G$ that is of  type $FP_n$ is of type $FP_{n+1}$.  Note that, for $n > 1$, $n$-coherence does not imply, nor is implied, by homological $n$-coherence.

 The following corollary follows from the above theorem. The case $n_0 = 1$ was proved earlier  in \cite{K-W}.

\begin{corollary} \label{cor1}  Let $G = K \rtimes \Gamma$, where $\Gamma$ is a finitely generated free non-cyclic  group, $K$ is of type $F_{n_0}$ (resp. $FP_{n_0}$) for some $n_0 \geq 1$. 
 Suppose that $\phi \colon K \to \R$ is a discrete character  with kernel $Ker(\phi) = N$ of type $F_{n_0-1}$ (resp.  $FP_{n_0-1}$) but not of type $F_{n_0}$  (resp. $FP_{n_0}$) and  such that  $\phi$  extends to a real homomorphism of $G$. Then there exists a discrete homomorphism $\psi \colon G \to \mathbb{R}$ that extends $\phi$ such that $Ker (\psi)$ is of type $F_{n_0}$ (resp. $FP_{n_0}$)  but is not of type $F_{n_0 + 1}$ (resp. $FP_{n_0 + 1}$). In particular $G$ is not $n_0$-coherent (resp. homologically $n_0$-coherent).
\end{corollary}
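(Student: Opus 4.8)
The plan is to obtain $\psi$ from Theorem~\ref{Main1}, which already supplies a discrete character whose kernel has the correct lower finiteness type, and then to rule out the higher finiteness type by a contradiction argument using the cone of admissible extensions from Remark~\ref{rem:cone}.

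First, one verifies that Theorem~\ref{Main1} applies to the short exact sequence $1\to K\to G\to\Gamma\to 1$. Since $\Gamma$ is a finitely generated free non-cyclic group, it is of type $F_\infty$ and $\Gamma^{ab}$ is infinite; as type $F_{n_0}$ (resp. $FP_{n_0}$) is inherited by an extension whose kernel and quotient are of that type, $G=K\rtimes\Gamma$ is of type $F_{n_0}$ (resp. $FP_{n_0}$). By hypothesis $N=Ker(\phi)$ is of type $F_{n_0-1}$ (resp. $FP_{n_0-1}$) and $\phi$ extends to a real homomorphism of $G$. Hence Theorem~\ref{Main1} yields a discrete $\psi\colon G\to\R$ extending $\phi$ with $M:=Ker(\psi)$ of type $F_{n_0}$ (resp. $FP_{n_0}$), and by Remark~\ref{rem:cone} the same is true for every rational $\psi$ in a fixed open cone $\mathcal C$ of extensions of $\phi$. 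It then remains to choose $\psi\in\mathcal C$ with $Ker(\psi)$ not of type $F_{n_0+1}$ (resp. $FP_{n_0+1}$).

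Next one records the structure of $M=Ker(\psi)$. Since $\psi$ is a discrete character restricting to $\phi$ on $K$, we have $M\cap K=Ker(\phi)=N$, and $\phi(K)$ is infinite cyclic, so the image $\bar M:=MK/K\cong M/N$ of $M$ in $\Gamma$ satisfies $[\Gamma:\bar M]=[\psi(G):\phi(K)]<\infty$; thus $\bar M$ is a finite-index subgroup of the finitely generated free non-cyclic group $\Gamma$, hence is itself free of rank $\geq 2$. Moreover $N=Ker(\psi)\cap K$ and $M=Ker(\psi)$ are normal in $G$, and $G/M\cong\Z$, so $G\cong M\rtimes\Z$. Thus we have an exact sequence $1\to N\to M\to\bar M\to 1$ with $\bar M$ free non-cyclic of finite rank.

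Finally, suppose for a contradiction that $Ker(\psi)$ is of type $F_{n_0+1}$ (resp. $FP_{n_0+1}$) for every rational $\psi\in\mathcal C$. Fixing one such $\psi$, it follows that $G\cong M\rtimes\Z$ is of type $F_{n_0+1}$ (resp. $FP_{n_0+1}$), and the goal is to feed this back through $1\to K\to G\to\Gamma\to 1$ to force $N=Ker(\phi)$ to be of type $F_{n_0}$ (resp. $FP_{n_0}$), contrary to hypothesis. I would do this by re-running the argument behind Theorem~\ref{Main1} one degree higher: now that $G$ (and hence $M$) carries the required finiteness, the homotopical criterion of Kuckuck~\cite{K} (resp. the homological criterion of Kochloukova--Lima~\cite{K-L}), combined with the openness of the higher Bieri--Neumann--Strebel--Renz invariants across the whole cone $\mathcal C$, translates the hypothesis into the assertion that $N=Ker(\phi)$ is of type $F_{n_0}$ (resp. $FP_{n_0}$) --- the desired contradiction. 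Hence some rational $\psi\in\mathcal C$ has $Ker(\psi)$ of type $F_{n_0}$ but not $F_{n_0+1}$ (resp. $FP_{n_0}$ but not $FP_{n_0+1}$); since $Ker(\psi)\leq G$, it follows that $G$ is not $n_0$-coherent (resp. not homologically $n_0$-coherent). The step I expect to be the real obstacle is exactly this downward transfer: a finiteness property of $Ker(\psi)$, or of $G$, does not descend to $N$ for a single character, so one must genuinely exploit the full cone $\mathcal C$ and the openness of the BNSR invariants --- precisely the point at which the hypothesis that $\Gamma$ is non-cyclic (equivalently, that $\bar M$ is non-cyclic) is used.
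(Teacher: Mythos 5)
The setup in your proposal is fine, and you have correctly identified the overall shape of the argument: obtain $\psi$ from Theorem~\ref{Main1} so that $M=Ker(\psi)$ is $F_{n_0}$ (resp.\ $FP_{n_0}$), and then argue by contradiction that it cannot be $F_{n_0+1}$ (resp.\ $FP_{n_0+1}$) by somehow forcing $N=Ker(\phi)$ to be $F_{n_0}$ (resp.\ $FP_{n_0}$). But the step you flag as ``the real obstacle'' is a genuine gap, and your proposed mechanism for it --- re-running Kuckuck/Kochloukova--Lima one degree higher plus openness across the cone --- does not supply the needed descent. Kuckuck's criterion (Proposition~\ref{Kuck}) only propagates finiteness \emph{upward}, from the auxiliary extension $B_0$ to the target $B$, given that $A$ and $C$ already have good finiteness; it gives no way to take finiteness of $M$ (or $G$) and push it \emph{down} to $N$. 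Likewise, openness of $\Sigma^{n_0+1}(G,\Z)$ tells you the set of admissible $[\psi]$ is open near the one you have, but says nothing about $\Sigma^{n_0}(K,\Z)$; there is no continuity argument along the cone $\mathcal C$ that degenerates to information about $[\phi]\in S(K)$.

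The tool the paper actually uses here is Theorem~\ref{SS}(iii) (Schmitt's result on actions on trees), applied to the iterated HNN decomposition of $G=K\rtimes F_n$ as $\Pi_1 *_K \Pi_2 *_K \cdots *_K \Pi_n$ with each $\Pi_i=K\rtimes\langle s_i\rangle$. One first (after a possible sign change, since $N$ not $FP_{n_0}$ only says that one of $[\phi],[-\phi]$ fails to lie in $\Sigma^{n_0}(K,\Z)$) arranges that $[\beta|_K]=[\phi]\notin\Sigma^{n_0}(K,\Z)$, and checks, as in the proof of Theorem~\ref{Main1}, that $[\beta|_{G_i}]\in\Sigma^{n_0}(G_i,\Z)$ for each truncation $G_i=\Pi_1*_K\cdots*_K\Pi_i$. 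Assuming $[\beta]\in\Sigma^{n_0+1}(G,\Z)$ (i.e.\ $M$ is $FP_{n_0+1}$), Theorem~\ref{SS}(iii) applied to the Bass--Serre tree of the HNN extension $G=G_{n-1}*_K\Pi_n$ --- using $[\beta]\in\Sigma^{n_0+1}(G,\Z)$ and $[\beta|_{G_{n-1}}]\in\Sigma^{n_0}(G_{n-1},\Z)$ at the vertex, and that $\beta|_K\neq 0$ on the edge group --- forces $[\beta|_K]\in\Sigma^{n_0}(K,\Z)$, the contradiction. This tree-theoretic descent is precisely what is missing from your argument; it is also why the paper fixes a single $\beta$ (up to sign) rather than assuming, as you do, that $Ker(\psi)$ is $FP_{n_0+1}$ for \emph{every} $\psi$ in the cone --- that uniform hypothesis is both unnecessary and unhelpful. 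Finally, note that the homotopical case with $n_0\geq 2$ is then obtained by combining the homotopical Theorem~\ref{Main1} (to get $M$ finitely presented) with the homological obstruction above ($M$ not $FP_{n_0+1}$), since $F_{n_0+1}$ implies $FP_{n_0+1}$; and the $n_0=1$ homotopical case is deduced directly from the homological one, as finitely generated but not $FP_2$ implies not finitely presented.
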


\begin{remark}
1) There are versions of  Theorem \ref{Main1} and Corollary \ref{cor1},  where type $FP_m$ is substituted with type $FP_m(R)$,  $R$ is any commutative ring with $1$. It suffices that in the proof of Corollary \ref{cor1} we use $\Sigma^m( - , R)$ (some authors use the notation  $\Sigma_R^m( - , R)$) instead of $\Sigma^m( - , \mathbb{Z})$. In particular this applies for $R = \mathbb{Q}$.

2) The above implies that  there is a mixed version of the corollary : if  $G = K \rtimes \Gamma$, where $\Gamma$ is finitely generated free non-cyclic, $K$ is of type $F_{n_0}$,  $\phi \colon K \to \R$ is a discrete character, $Ker(\phi) = N$ of type $F_{n_0-1}$ but not of type $FP_{n_0}$ ( resp. $FP_{n_0}(\mathbb{R})$) then there is  a discrete character $\psi \colon G \to \mathbb{R}$ that extends $\phi$ such that $Ker (\psi)$ is of type $F_{n_0}$ but not of type $FP_{n_0+1}$ ( resp. $FP_{n_0+1}(\mathbb{R})$).

3) The map $\psi$ from Corollary \ref{cor1}   can be any of the maps described in Remark \ref{rem:cone}. 
\end{remark}

In Section \ref{section-cor1} we discuss some corollaries of the above results for extensions of generalised Thompson groups $F_{n, \infty}$, see Corollary \ref{Thompson}. The proof of Corollary \ref{Thompson} depends on the structure of the $\Sigma$-invariants for $F_{n,\infty}$.  For general $n \geq 2$ the invariant $\Sigma^1(F_{n, \infty})$ was calculated by Bieri, Neumann and Strebel in \cite{B-N-S}. The $\Sigma$-invariants of $F = F_{2,\infty}$  were calculated by Bieri, Geoghegan and the first author in \cite{B-G-K}. The classification of $\Sigma^m(F_{n, \infty})$ for arbitrary $m$ and $n$ was completed by Zaremsky in \cite{Z}.

Furthermore in Section \ref{section-cor1} we discuss some corollaries about extensions of  some groups with negative Euler characteristics or some specific 2-generator groups. In order to do so, we will need the notion of excessive homology of a sequence. 
Suppose that $ 1 \to K \to G \to \Gamma \to 1$ is a short exact sequence of finitely generated groups. Using the terminology of \cite{K-W} we say that the {\em excessive homology} of this short exact sequence is $\operatorname{Ker}(H_1(G, \mathbb{R}) \to  H_1(\Gamma, \mathbb{R}))$, and we say that the sequence has excessive homology if that space has positive dimension.
 The following corollary generalizes the incoherence of a (non-cyclic f.g. free)-by-(non-cyclic  f.g. free) group with excessive homology \cite[Theorem 4.6]{K-W}.

\begin{corollary} \label{general}
Let $ 1 \to K \to G \to \Gamma \to 1$ be a short exact sequence  with excessive homology and infinite $\Gamma^{ ab}$  of groups of type $F$ such that  both $K$ and $\Gamma$ have negative Euler characteristic  $\chi(\Gamma) < 0, \chi(K) < 0$ and $cd(G) \leq 3$. 
Then $G$ is incoherent.
\end{corollary}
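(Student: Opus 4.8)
The plan is to apply Theorem~\ref{Main1} with $n_0=1$ to split off a finitely generated normal subgroup of $G$ with infinite cyclic quotient, and then to rule out its finite presentability by an Euler characteristic argument, as in the free-by-free case \cite[Theorem 4.6]{K-W}.

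\emph{Step 1 (extract a character of $K$).} The excessive homology hypothesis says $\operatorname{Ker}\big(H_1(G,\R)\to H_1(\Gamma,\R)\big)\neq 0$. Dualizing, the inflation map $H^1(\Gamma,\R)\to H^1(G,\R)$ is not surjective, equivalently the restriction map $H^1(G,\R)\to H^1(K,\R)$ is nonzero: there is a character of $G$ restricting nontrivially to $K$. Since these maps are defined over $\Q$, I would choose a nonzero class $\phi$ in the image of $H^1(G,\Q)\to H^1(K,\Q)$. As $K$ is finitely generated, $\phi(K)$ is a finitely generated subgroup of $\Q$, hence infinite cyclic; so $\phi\colon K\to\R$ is a discrete character which, by construction, is the restriction of a homomorphism $G\to\R$.

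\emph{Step 2 (apply Theorem~\ref{Main1}).} Here $G$ and $K$ are of type $F$ (in particular $F_1$), $\operatorname{Ker}(\phi)$ is automatically of type $F_0$, $\Gamma^{ab}$ is infinite, and $\phi$ extends to $G$; so Theorem~\ref{Main1} with $n_0=1$ yields a discrete homomorphism $\psi\colon G\to\R$ extending $\phi$ with $M:=\operatorname{Ker}(\psi)$ finitely generated. Since $\phi\neq 0$ we have $\psi\neq 0$, so $G/M$ is infinite cyclic and $G\cong M\rtimes\Z$.

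\emph{Step 3 ($M$ is not finitely presented).} Suppose for contradiction that $M$ is finitely presented, hence of type $FP_2$. Since $M\leq G$ and $\operatorname{cd}(G)\leq 3$, and since passing to the $\Z$-extension $M\rtimes\Z=G$ raises the cohomological dimension, one gets $\operatorname{cd}_\Q(M)\leq\operatorname{cd}(G)-1\leq 2$; being of type $FP_2$ over $\Q$ with $\operatorname{cd}_\Q\leq 2$, $M$ is then of type $FP$ over $\Q$. Then the split extension $1\to M\to G\to\Z\to 1$ forces $\chi(G)=\chi(M)\,\chi(\Z)=0$. But $K$, $\Gamma$ and $G$ are of type $F$, so by multiplicativity $\chi(G)=\chi(K)\,\chi(\Gamma)>0$ because $\chi(K)<0$ and $\chi(\Gamma)<0$ — a contradiction. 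Hence $M$ is finitely generated but not finitely presented, and $G$ is incoherent.

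The delicate point is the inequality $\operatorname{cd}_\Q(M)\leq 2$ in Step 3: one has to make sure the cohomological-dimension estimate for $M\rtimes\Z$ really applies, and it is precisely here (via $\operatorname{cd}(G)\leq 3$ together with enough finiteness of $M$) that "finitely presented" can be upgraded to "$FP$ over $\Q$", which is what makes the Euler characteristic obstruction bite. Steps 1--2 are routine given Theorem~\ref{Main1}.
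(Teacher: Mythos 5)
Your Steps 1--2 are sound and match the paper's setup: excessive homology produces a discrete character $\phi$ of $K$ extending to $G$, and Theorem~\ref{Main1} with $n_0=1$ (equivalently \cite[Theorem~1]{F-V}) gives a finitely generated $M=\operatorname{Ker}(\psi)$ with $G/M\cong\Z$. However, your Step~3 contains a genuine gap. You assert that ``passing to the $\Z$-extension $M\rtimes\Z=G$ raises the cohomological dimension'' to conclude $\operatorname{cd}_\Q(M)\le\operatorname{cd}(G)-1\le 2$, but the general inequality goes the \emph{other} way: one always has $\operatorname{cd}(G)\le\operatorname{cd}(M)+\operatorname{cd}(\Z)=\operatorname{cd}(M)+1$, i.e.\ $\operatorname{cd}(M)\ge\operatorname{cd}(G)-1$, together with the subgroup inequality $\operatorname{cd}(M)\le\operatorname{cd}(G)$. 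Thus $\operatorname{cd}_\Q(M)\in\{2,3\}$, and nothing you have written rules out $\operatorname{cd}_\Q(M)=3$, in which case $M$ being $FP_2$ does not upgrade to $FP$ over $\Q$ and the Euler characteristic $\chi(M)$ need not be defined. There is a nontrivial result of Bieri asserting roughly that a normal subgroup of infinite index with strong enough finiteness cannot have full cohomological dimension, but that is exactly the kind of statement that would need to be cited and verified here (and its hypotheses involve $FP$-type conditions close to what you are trying to prove), so the argument as written is circular or incomplete.

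The paper takes a different route that avoids any discussion of $\operatorname{cd}(M)$. It proves the stronger Theorem~\ref{incoherent}: if $M$ were finitely presented, then $\pm[\chi]\in\Sigma^2(G,\Z)$ by Theorem~\ref{criterion}, and then by Sikorav's characterization $\operatorname{Tor}^{\Z G}_i(\Z,\widehat{\Z G}_\chi)=0$ for $i\le 2$. Tensoring a free resolution of $\Z$ with ranks $k_0,k_1,k_2,k_3$ against the Novikov ring and splitting off projectives yields an epimorphism $\widehat{\Z G}_\chi^{\,k_1+k_3}\twoheadrightarrow\widehat{\Z G}_\chi^{\,k_0+k_2}$, which after passing to the quotient domain $\Z[[t]][1/t]$ forces $k_1+k_3\ge k_0+k_2$, i.e.\ $\chi_3(G)\le 0$. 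Under $\operatorname{cd}(G)\le 3$ and type $F$, Lemma~\ref{Lemma CW} identifies $\chi_3(G)=\chi(G)=\chi(K)\chi(\Gamma)>0$, a contradiction. The rank-count over the Novikov ring replaces the Euler-characteristic-of-$M$ argument entirely and is insensitive to whether $M$ attains maximal cohomological dimension, which is exactly the point at which your proof stalls. If you want to pursue your route, you would need to supply a proof or precise citation for why $\operatorname{cd}_\Q(M)<\operatorname{cd}_\Q(G)$ under the hypothesis that $M$ is merely $FP_2$; otherwise I would recommend switching to the Novikov-ring argument.
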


 In what follows, the term {\em surface group of genus $g$} will refer to the fundamental group of a connected, closed, orientable surface of genus $g > 0$. 
We use some ideas from the proof of Corollary \ref{general} to prove the following result.

\begin{proposition} \label{Sigma2} Let $G$ be a  (f.g. free or surface)-by-(f.g. free or surface) group with Euler characteristic $\chi(G) \not= 0$. Then $\Sigma^2(G, \mathbb{Z}) = \emptyset$, in particular $\Sigma^2(G) = \emptyset$.
\end{proposition}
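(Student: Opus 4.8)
The plan is to deduce $\Sigma^2(G,\mathbb{Z}) = \emptyset$ from the contrapositive: I will show that if $\Sigma^2(G,\mathbb{Z})$ were nonempty, then $G$ would admit an algebraic fiber of type $FP_2$, which combined with the structure of $G$ as a (free or surface)-by-(free or surface) group would force $\chi(G) = 0$. Concretely, write $G$ as an extension $1 \to K \to G \to \Gamma \to 1$ with $K$ and $\Gamma$ each free of finite rank or a surface group; such groups are of type $F$ (indeed $F_\infty$), so $G$ is of type $F_\infty$ as well, and in particular $\chi(G) = \chi(K)\chi(\Gamma)$ is defined and, by hypothesis, nonzero. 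Note that $\chi(G) \neq 0$ forces both $\chi(K) \neq 0$ and $\chi(\Gamma) \neq 0$; since a free group of rank $1$ (i.e. $\mathbb{Z}$) and the trivial group have Euler characteristic $0$ or... well, rank-one free has $\chi = 0$, so in fact $K$ and $\Gamma$ are each either non-cyclic free or a surface group, and in either case $\Gamma^{ab}$ is infinite.

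The heart of the argument is the following. Suppose $[\chi] \in \Sigma^2(G,\mathbb{Z})$ for some character $\chi \colon G \to \mathbb{R}$. By the openness of $\Sigma^2$ and standard rationality considerations (as in Remark \ref{rem:cone} and the proof of Corollary \ref{cor1}), we may perturb $\chi$ to a discrete (rational) character $\psi \colon G \to \mathbb{Z}$ with $[\psi] \in \Sigma^2(G,\mathbb{Z})$, so that $\ker(\psi)$ is of type $FP_2$. I would then restrict attention to how $\psi$ interacts with $K$. There are two cases. If $\psi|_K \neq 0$, then $\psi|_K$ is a nonzero character of $K$; since $K$ is free or a surface group, $\Sigma^1(K,\mathbb{Z})$ and its complement are completely understood — for free non-cyclic $K$ the invariant $\Sigma^1(K)$ is empty, and for a surface group of genus $\geq 2$ it is empty as well (the surface does not fiber over $S^1$ with finitely generated kernel in a way detected here, or more precisely $\Sigma^1$ of a surface group of genus $\geq 1$ is empty). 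This will be the key point: restriction $\Sigma^2(G,\mathbb{Z}) \to$ (information about $\Sigma^1(K,\mathbb{Z})$) via the extension, together with $\Sigma^1(K) = \emptyset$, yields a contradiction unless $\psi|_K = 0$.

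So the only surviving case is $\psi|_K = 0$, i.e. $\psi$ factors through $\Gamma$, giving a character $\bar\psi \colon \Gamma \to \mathbb{Z}$ with $[\bar\psi] \in \Sigma^2(\Gamma,\mathbb{Z})$ — here one uses that $K$ is of type $F_\infty$ (hence $FP_\infty$) so that membership in $\Sigma^2$ passes between $G$ and $\Gamma$ when the character is pulled back from $\Gamma$ (the relevant fact is that for $N \le G$ with $G/N$ of type $FP_\infty$, the $\Sigma$-invariant behaves well; here more directly $\ker(\psi) = \ker(\bar\psi) \ltimes K$ or an extension thereof, and type $FP_2$ of $\ker\psi$ with $K$ of type $F_\infty$ forces $\ker\bar\psi$ of type $FP_2$). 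But again $\Gamma$ is free non-cyclic or a surface group, so $\Sigma^2(\Gamma,\mathbb{Z}) = \emptyset$: for free groups because $\Sigma^2$ is contained in $\Sigma^1 = \emptyset$, and for surface groups because a surface group of genus $\geq 1$ has $\Sigma^1 = \emptyset$ hence $\Sigma^2 = \emptyset$. This contradiction completes the proof.

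The main obstacle I anticipate is making precise the "restriction" step — extracting genuine information about $\Sigma^1(K,\mathbb{Z})$ (or about a character of $K$ lying outside $\Sigma^1$) from the hypothesis $[\psi] \in \Sigma^2(G,\mathbb{Z})$ when $\psi|_K \neq 0$. The clean way is to run the same machine that proves Corollary \ref{general}: if $\psi|_K \neq 0$ were in a "good" position, one could use Theorem \ref{Main1} in reverse or the Sigma-invariant inequalities for group extensions (the kernel $\ker\psi$ sits in an extension $1 \to \ker(\psi|_K) \to \ker\psi \to \psi(G)/\psi(K) \to 1$-type picture, or one analyzes $\ker\psi \cap K = \ker(\psi|_K)$) to conclude that $\ker(\psi|_K)$ is finitely generated, contradicting $\Sigma^1(K) = \emptyset$. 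I would organize the whole argument around the two cases $\psi|_K = 0$ and $\psi|_K \neq 0$, dispatching the first by the emptiness of $\Sigma^2(\Gamma,\mathbb{Z})$ and the second by the emptiness of $\Sigma^1(K,\mathbb{Z})$, and in both cases the hypothesis $\chi(G) \neq 0$ is exactly what guarantees we are not in the degenerate situations ($K$ or $\Gamma$ cyclic or trivial) where these $\Sigma^1$/$\Sigma^2$ emptiness statements fail.
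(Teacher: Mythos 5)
There is a genuine gap at the very start of your argument, and it propagates. You write that from $[\psi]\in\Sigma^2(G,\mathbb{Z})$ you may conclude that $\ker(\psi)$ is of type $FP_2$, but Theorem~\ref{criterion} requires \emph{both} $[\psi]$ and $[-\psi]$ to lie in $\Sigma^2(G,\mathbb{Z})$ for that conclusion, and the $\Sigma$-invariants are not in general antipodally symmetric. So the contrapositive you are trying to run, namely ``$\Sigma^2(G,\mathbb{Z})\neq\emptyset$ produces an algebraic fiber of type $FP_2$,'' is simply false as stated; at best your strategy could rule out \emph{antipodal pairs} in $\Sigma^2(G,\mathbb{Z})$, which is strictly weaker than the proposition (the statement is about single points of the invariant). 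Even granting this, the case $\psi|_K\neq 0$ is left as an acknowledged ``main obstacle'' without a concrete mechanism: neither $\ker\psi$ being $FP_2$ nor $[\psi]\in\Sigma^2(G,\mathbb{Z})$ directly controls the finiteness properties of the subgroup $\ker(\psi|_K)\leq K$, and the ``Sigma-invariant inequalities for group extensions'' you invoke (Meinert-type results) give inclusions $\Sigma^n(K)\ast\Sigma^n(\Gamma)\subseteq\Sigma^n(G)$, i.e.\ \emph{lower} bounds on $\Sigma^n(G)$, whereas you need an \emph{upper} bound to prove emptiness.

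The paper's proof avoids any case distinction and any passage to kernels: it uses Sikorav's characterization $[\chi]\in\Sigma^n(G,\mathbb{Z})\Leftrightarrow \operatorname{Tor}_i^{\mathbb{Z}G}(\mathbb{Z},\widehat{\mathbb{Z}G}_\chi)=0$ for $i\leq n$, which is a statement about a single ray $[\chi]$, together with the explicit free resolution of length~$4$ of $\mathbb{Z}$ over $\mathbb{Z}G$ coming from Wall's Lemma~\ref{Lemma CW} (with $k_0=1$, $k_1=a_1+a_2$, $k_2=r_1+r_2+a_1a_2$, $k_3=r_1a_2+r_2a_1$, $k_4=r_1r_2$). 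Tensoring with $\widehat{\mathbb{Z}G}_\chi$ and projecting to the Novikov ring $\mathbb{Z}[[t]][t^{-1}]$ of the quotient $\mathbb{Z}$, one gets a surjection of free modules over a domain of ranks $k_1+k_3$ onto $k_0+k_2$, forcing $k_1+k_3\geq k_0+k_2$; a short computation turns this into $r_1r_2-\chi(G)\geq 0$, which together with $\chi(G)>0$, $r_i\in\{0,1\}$, and the parity of $\chi$ of a surface group yields the contradiction. If you want to pursue a case-based argument you would first need to establish, independently of any $\pm$-symmetry, that a single $[\chi]\in\Sigma^2(G,\mathbb{Z})$ with $\chi|_K\neq 0$ constrains the Novikov homology of $K$; that is essentially what the rank inequality above does, uniformly in both cases.
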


  The following corollary generalizes the first part of \cite[Theorem 6.1]{K-W}, where $K$ is a free group of rank 2. Recall that a group $K$ algebraically fibers if there is an epimorphism $K \to \mathbb{Z}$ with a finitely generated kernel.
 
 \begin{corollary}  \label{gen} Let $G = K \rtimes \Gamma$ be a short exact sequence of groups such that $K$ is 2-generated, $K$ has a non abelian quotient $T = \mathbb{Z} \rtimes \mathbb{Z}_2$ and $K$ does not algebraically fiber. Suppose that $\Gamma$ is a finitely generated non-cyclic free group. Then $G$ is incoherent.
\end{corollary}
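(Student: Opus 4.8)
The plan is to deduce the incoherence of $G$ from Corollary~\ref{cor1} in the case $n_0=1$, applied not to $G$ but to a carefully chosen finite-index subgroup. Fix an epimorphism $\rho\colon K\twoheadrightarrow T=\Z\rtimes\Z_2$ onto the infinite dihedral group, let $C\cong\Z$ be its translation subgroup (characteristic of index $2$ in $T$), and set $K_1:=\rho^{-1}(C)$, an index-$2$ subgroup of $K$ on which $\phi:=\rho|_{K_1}\colon K_1\twoheadrightarrow C\cong\Z$ is a discrete character with $\operatorname{Ker}\phi=\operatorname{Ker}\rho$. Since $K$ is $2$-generated, the surjection $K^{ab}/2\twoheadrightarrow T^{ab}/2\cong(\Z/2)^2$ induced by $\rho$ is an isomorphism, so the conjugation action of $\Gamma$ on $K$ induces on $K^{ab}/2$ an action through the finite group $\mathrm{GL}_2(\Z/2)$; let $\Gamma_1\le\Gamma$ be its kernel, which is of finite index in $\Gamma$, hence again finitely generated free and non-cyclic. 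Then $\Gamma_1$ stabilizes $K_1$, and since $\Gamma_1$ normalizes $K_1$ inside $G$ and meets it trivially, $K_1\rtimes\Gamma_1$ is a finite-index subgroup of $G$.

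The crux of the reduction is to make $\phi$ extend to a real homomorphism of a finite-index subgroup, equivalently to make it invariant under the acting free group. Conjugation by an element of $K\setminus K_1$ inverts $C=\rho(K_1)$, so $\phi$ sits in the sign-isotypic summand of $H^1(K_1,\mathbb{Q})$ for the induced $\Z/2=K/K_1$-action; using that $K$ is $2$-generated (so $K_1$ is generated by at most three elements) one checks that this summand is one-dimensional and that $\phi$ generates its image in the torsion-free group $H^1(K_1,\Z)$. Because $\Gamma_1$ stabilizes $K_1$, its action on $K_1^{ab}$ commutes with the $\Z/2$-action (an inner automorphism is trivial on the abelianization), hence preserves this line and acts on its integral part through $\mathrm{GL}_1(\Z)=\{\pm1\}$; let $\Gamma_0\le\Gamma_1$ be the kernel of this sign. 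Since $\Gamma_0$ preserves both isotypic summands, acts trivially on the sign line modulo torsion, and $H^1(K_1,\Z)$ is torsion-free, $\Gamma_0$ fixes $\phi$. Thus $\phi$ extends to $G_0:=K_1\rtimes\Gamma_0$, which is still of finite index in $G$.

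It remains to see that $\operatorname{Ker}\phi=\operatorname{Ker}\rho$ is not finitely generated, and this is precisely where the assumption that $K$ does not algebraically fiber is used: if $\operatorname{Ker}\rho$ were finitely generated then $K$ would be an extension of $T$ by a finitely generated group, and since $C$ has finite index in $T$, using $2$-generatedness of $K$ one would obtain an epimorphism $K\twoheadrightarrow\Z$ with finitely generated kernel --- contradicting the hypothesis. (This implication is the main point to pin down.) Granting it, Corollary~\ref{cor1} with $n_0=1$ applies to $G_0=K_1\rtimes\Gamma_0$: $K_1$ is of type $F_1$, $\Gamma_0$ is finitely generated free non-cyclic, $N=\operatorname{Ker}\phi$ is of type $F_0$ but not of type $F_1$, and $\phi$ extends to $G_0$; hence $G_0$ carries a discrete character whose kernel is of type $F_1$ but not of type $F_2$, that is, finitely generated but not finitely presented. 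As this kernel is a subgroup of $G$, the group $G$ is incoherent.

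The step I expect to be the main obstacle is showing that $\operatorname{Ker}\rho$ is not finitely generated: this is the only place the non-fibering hypothesis can do any work, and it is delicate because finite generation of a normal subgroup is neither inherited nor co-inherited along subgroups of finite index, so one must genuinely exploit the interaction between the $2$-generatedness of $K$ and the dihedral quotient $T$. A secondary technical point is the one-dimensionality of the sign summand of $H^1(K_1,\mathbb{Q})$, where the $2$-generatedness of $K$ again enters; once these are settled, the remainder is a routine bookkeeping reduction to Corollary~\ref{cor1}.
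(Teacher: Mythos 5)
Your proof follows essentially the same route as the paper's, which reduces the statement to Corollary~\ref{new} plus the homological version of Corollary~\ref{cor1}: pass to the index-$2$ subgroup $K_1 = \rho^{-1}(C)$ of $K$, identify $\phi = \rho|_{K_1}$ as spanning the one-dimensional sign-isotypic summand of $H^1(K_1;\mathbb{Q})$ under the $K/K_1 \cong \mathbb{Z}_2$-action (this one-dimensionality is precisely the relation-module computation the paper performs in the unnamed proposition before Corollary~\ref{new}), cut $\Gamma$ down to a finite-index subgroup $\Gamma_0$ that fixes $\phi$, and apply Corollary~\ref{cor1} with $n_0 = 1$. Working with $H^1$ rather than with $H_1$ of the kernel, and with $K_1 = B$ rather than with the further finite-index subgroup $C$ constructed in Theorem~\ref{excessive}, are cosmetic variations; the resulting characters are proportional on a common finite-index subgroup, so their kernels are commensurable.

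The step you flag --- that $\operatorname{Ker}\rho$ is not finitely generated --- is a genuine gap, and the argument you sketch does not close it. The claim that a finitely generated $\operatorname{Ker}\rho$ would force $K$ to algebraically fiber is false: take $K = T = \mathbb{Z}\rtimes\mathbb{Z}_2$ and $\rho = \mathrm{id}$, so $\operatorname{Ker}\rho = 1$ is finitely generated, while $K^{\mathrm{ab}}\cong\mathbb{Z}_2\times\mathbb{Z}_2$ is finite and $K$ admits no epimorphism onto $\mathbb{Z}$ whatsoever. The $2$-generatedness of $K$ cannot repair this. Worse, for this $K$ and $\Gamma = F_2$ acting trivially, $G = T\times F_2$ contains $\mathbb{Z}\times F_2 \cong \pi_1\bigl(S^1\times (\text{pair of pants})\bigr)$ as an index-$2$ subgroup, and the latter is coherent (Scott), so $G$ is coherent and the corollary's conclusion fails. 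In fairness, the paper's own one-line proof exhibits the same silence: Corollary~\ref{new} supplies only the excessive homology (a character $\phi$ of $K\cap G_0$ extending to $G_0$), while Corollary~\ref{cor1} with $n_0 = 1$ additionally needs $\operatorname{Ker}\phi$ to fail $FP_1$, and that hypothesis is exactly what is commensurable with $\operatorname{Ker}\rho$ being infinitely generated. Some additional hypothesis --- for instance that $\operatorname{Ker}\rho$ is not finitely generated, which does hold in all of the paper's stated examples ($K$ free of rank $2$, $K = \mathbb{Z}*\mathbb{Z}_2$, $K_2$) --- appears to be needed to make the statement, and hence either proof, correct.
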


We want to address now the issue of applying iteratively Theorem \ref{Main1} to the case where a group $G$ is obtained as result of successive extensions. Recall that a {\em subnormal filtration of length $m$} of $G$ is a sequence $1  = G_0 \unlhd G_1 \unlhd \ldots \unlhd G_m = G$;  such a filtration is called {\em normal} if moreover each $G_i$ is normal in $G$. 
There is a challenge in applying iteratively Theorem \ref{Main1} coming from the fact that it may be unclear whether, at each step, the algebraic fibration of kernel $FP_{n_{0}-1}$ extends or not to the next stage; contrary to the case of length $2$, where $n_0 = 1$, this is not determined by excessive homology alone but requires a specific cohomology class (or at best one of a family of), determined in the previous iteration of Theorem \ref{Main1}, to extend. In practice, it may be hard to write down the optimal assumption for this to happen, so we limit ourselves to discuss some sufficient conditions for the existence of the extensions.
 In Section \ref{proofs-thm} we will show the following.

\begin{theorem} \label{thm:poly1} 
Let $s \geq 0$ and $m \geq 2$ be integers and $G$ be a group with a subnormal filtration $1  = G_0 \unlhd G_1 \unlhd \ldots \unlhd G_m = G$ such that 

a) there is a discrete character $\widetilde{\alpha}_j: G \to \mathbb{R}$ such that $\widetilde{\alpha}_j(G_j) = 0$ but $\widetilde{\alpha}_j(G_{j+1}) \not= 0$ for $0 \leq j \leq m-1$;

b) $G_j$ has type $FP_{ s+ j}$ for $ 1 \leq j \leq m-1$, $G_m$ has type $FP_{m+ s - 1}$.

 Then for every discrete character $\phi : G_1 \to \mathbb{R}$ that extends to a real character of $G$ and $Ker(\phi)$ has type $FP_{s}$, there is a discrete character $\psi : G \to \mathbb{R}$ that extends $\phi$ such that $Ker(\psi)$ has type $FP_{s + m - 1}$.
 
  Furthermore, if the filtration is normal, the assumption a) is equivalent to 

a') the sequence $1 \to G_{j+1}/ G_j  \to G/ G_j \to G/ G_{j+1} \to 1$ has excessive homology
for  $ 0 \leq j \leq m-1$.
 \end{theorem}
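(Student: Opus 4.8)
The plan is to apply Theorem~\ref{Main1} repeatedly, climbing one step of the filtration at a time from $G_1$ up to $G_m=G$, and to use the freedom in the choice of fibering character recorded in Remark~\ref{rem:cone} to guarantee that each character produced along the way extends to a real character of all of $G$, so that the next step of the climb is legitimate. First I would normalize: by b) every $G_j$ with $1\le j\le m-1$ is of type $FP_1$ and $G=G_m$ is of type $FP_{m+s-1}$ with $m+s-1\ge1$, so $G$, each $G_j$, and each quotient $G/G_j$ or $G_{j+1}/G_j$ is finitely generated. Rescaling $\phi$ we may assume it is rational-valued; as $G$ is finitely generated and $\phi$ extends to a real character of $G$, it extends to a rational-valued character $\widehat{\phi}\colon G\to\R$, which, $G$ being finitely generated, is automatically discrete, and likewise we may take each $\widetilde{\alpha}_j$ rational-valued. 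Then every character of $G$ of the form $\widehat{\phi}+\sum_j t_j\widetilde{\alpha}_j$ with $t_j\in\Q$ is rational-valued, hence discrete, so discreteness will be automatic throughout.

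For the iteration, set $\widehat{\phi}_1:=\widehat{\phi}$ and $\phi_1:=\phi$, and assume inductively that for some $1\le j\le m-1$ a rational-valued character $\widehat{\phi}_j\colon G\to\R$ has been constructed so that $\phi_j:=\widehat{\phi}_j|_{G_j}$ extends $\phi$ and $Ker(\phi_j)$ is of type $FP_{s+j-1}$. Apply Theorem~\ref{Main1} to the short exact sequence $1\to G_j\to G_{j+1}\to G_{j+1}/G_j\to1$ with $n_0=s+j$; its hypotheses all hold: $G_j$ and $G_{j+1}$ are of type $FP_{s+j}$ by b) (for $G_{j+1}$ one uses that it has type $FP_{s+j+1}$ when $j+1\le m-1$, and that $G_m$ has type $FP_{m+s-1}=FP_{s+j}$ when $j+1=m$); $(G_{j+1}/G_j)^{ab}$ is infinite because $\widetilde{\alpha}_j$ vanishes on $G_j$ but not on $G_{j+1}$, hence induces a nonzero homomorphism $G_{j+1}/G_j\to\R$; and $\phi_j$ is a discrete character with $Ker(\phi_j)$ of type $FP_{s+j-1}$ that extends to the real character $\widehat{\phi}_j|_{G_{j+1}}$ of $G_{j+1}$. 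By Theorem~\ref{Main1} together with Remark~\ref{rem:cone} there is a cone $\mathcal{C}_j$ of characters of $G_{j+1}$ such that every rational point of $\mathcal{C}_j$ extending $\phi_j$ has kernel of type $FP_{s+j}$; moreover $\mathcal{C}_j$ contains the characters $\widehat{\phi}_j|_{G_{j+1}}+t\,\widetilde{\alpha}_j|_{G_{j+1}}$ for all sufficiently large $t$, since $\widetilde{\alpha}_j|_{G_{j+1}}$ is the inflation of a nonzero rational character of $G_{j+1}/G_j$ and it is exactly such a character that witnesses the hypothesis ``$(G_{j+1}/G_j)^{ab}$ infinite'' used by Theorem~\ref{Main1}. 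Fix a rational $t_j>0$ with $\widehat{\phi}_j|_{G_{j+1}}+t_j\widetilde{\alpha}_j|_{G_{j+1}}\in\mathcal{C}_j$ and put $\widehat{\phi}_{j+1}:=\widehat{\phi}_j+t_j\widetilde{\alpha}_j\colon G\to\R$. Since $\widetilde{\alpha}_j|_{G_j}=0$ we have $\widehat{\phi}_{j+1}|_{G_j}=\phi_j$, so $\phi_{j+1}:=\widehat{\phi}_{j+1}|_{G_{j+1}}$ extends $\phi$, is discrete, and has kernel of type $FP_{s+j}$; this closes the induction. After the step $j=m-1$ the character $\psi:=\widehat{\phi}_m$ is defined on $G_m=G$, extends $\phi$, and $Ker(\psi)$ is of type $FP_{s+m-1}$, as required; the case $m=2$ is the single step $j=1$.

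I expect the main obstacle to be the assertion, in the previous paragraph, that the explicit character $\widehat{\phi}_j|_{G_{j+1}}+t_j\widetilde{\alpha}_j|_{G_{j+1}}$ may be taken inside the cone $\mathcal{C}_j$ of Remark~\ref{rem:cone}: the bare existence statement of Theorem~\ref{Main1} produces \emph{some} fibering character with kernel of type $FP_{s+j}$, but without controlling which one there is no reason it should extend to $G$ and hence feed into the next stage. Making this precise should amount to revisiting the proof of Theorem~\ref{Main1}, and of Remark~\ref{rem:cone}, with $\widetilde{\alpha}_j$ in the role of the auxiliary character pulled back from the quotient $G_{j+1}/G_j$, and reading off that the fibering character produced is of the stated form for $t_j\gg0$.

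For the last assertion, suppose the filtration is normal, so that for each $0\le j\le m-1$ the sequence $1\to G_{j+1}/G_j\to G/G_j\to G/G_{j+1}\to1$ is a short exact sequence of finitely generated groups; then $H_1(G/G_j;\R)$ and $H_1(G/G_{j+1};\R)$ are finite dimensional and dual to $H^1(G/G_j;\R)$ and $H^1(G/G_{j+1};\R)$. A discrete character $\widetilde{\alpha}_j$ as in a) vanishes on $G_j$, hence is the inflation of a character of $G/G_j$, and the requirement $\widetilde{\alpha}_j(G_{j+1})\ne0$ says precisely that this character does not vanish on $G_{j+1}/G_j$, i.e.\ that it lies outside the image of the inflation $H^1(G/G_{j+1};\R)\to H^1(G/G_j;\R)$; conversely any character of $G/G_j$ not vanishing on $G_{j+1}/G_j$ may be rationalized (as $G/G_j$ is finitely generated) and inflated to a discrete character of $G$ as in a). Hence a) holds for the index $j$ if and only if that inflation map fails to be surjective, which by duality is equivalent to $H_1(G/G_j;\R)\to H_1(G/G_{j+1};\R)$ failing to be injective, i.e.\ to the sequence $1\to G_{j+1}/G_j\to G/G_j\to G/G_{j+1}\to1$ having excessive homology. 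This establishes the equivalence of a) and a').
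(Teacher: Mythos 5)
Your proof is correct and takes essentially the same approach as the paper's, which organizes the argument as a downward induction on $m$: Theorem~\ref{Main1} and Remark~\ref{rem:cone} are applied to $1\to G_1\to G_2\to G_2/G_1\to 1$ to produce a discrete character of $G_2$ of the form $(\mu^{-1}\widetilde{\alpha}_1+c)|_{G_2}$ (hence visibly extending to $G$) with kernel of type $FP_{s+1}$, and the inductive hypothesis is then applied to the shortened filtration $1\unlhd G_2\unlhd\ldots\unlhd G_m$ with $s$ replaced by $s+1$; your forward iteration simply unrolls this induction. The caveat in your penultimate paragraph is not actually a gap: Remark~\ref{rem:cone} already asserts outright that the fibering character from Theorem~\ref{Main1} may be taken of the form $\mu^{-1}\widetilde{\alpha}+c$ for any discrete $\widetilde{\alpha}$ vanishing on $K$ and any discrete extension $c$ of $\phi$ with $\mu$ sufficiently small, which is exactly the form $c+t\widetilde{\alpha}$ with $t\gg0$ you need, so no revisiting of the proof of Theorem~\ref{Main1} is required.
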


\begin{remark} 
 There are versions of  Theorem \ref{thm:poly1}  where type $FP_k$ is substituted with type $FP_k(\mathbb{Q})$, and other versions where type $FP_k$ is substituted with type $F_k$
\end{remark}

The main applications of Theorem \ref{thm:poly1} are discussed in Section \ref{pfs}, and concern the study of higher algebraic fibering of poly-free and poly-surface groups, and their role in the study of coherence. (Here, as described in Section \ref{pfs}, we require the filtrations to be subnormal only,  and furthermore we use the term {\em poly-free groups} to refer to what some author refer to as {\em poly-finitely generated free}.)  In particular, Hillman asked in \cite{Hi15} about coherence of poly-surface groups of length $2$, and whenever the sequence $1 \to K \to G \to \Gamma \to 1$ has excessive homology, the algebraic fibration $\phi \colon G \to \Z$ has kernel that is not finitely presented. Theorem \ref{thm:poly1} allows to obtain a similar result for higher coherence. 

The first result is the following.

\begin{theorem} \label{thm:poly} Let $G$ be a poly-free or a poly-surface group of length $m$ with subnormal filtration $1  = G_0 \unlhd G_1 \unlhd \ldots \unlhd G_m = G$ and assume that $H_1(G;\Z) \cong \oplus_{i=0}^{m-1}H_1(G_{i+1}/G_i;\Z)$; then $G$ admits an algebraic fibration $\psi \colon G \to \Z$ such that $M = \mbox{ker } \psi$ is $F_{m-1}$. Furthermore, if $\chi(G) \neq 0$, $M$ is not $FP_m$.  \end{theorem}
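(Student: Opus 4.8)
The plan is to derive the finiteness statement from Theorem~\ref{thm:poly1}, applied with $s=0$ in its $F_k$-version, and the non-finiteness statement from a computation of the $L^2$-Betti numbers of $G$. Write $Q_i=G_{i+1}/G_i$ for $0\le i\le m-1$; we may assume $m\ge 2$, the case $m=1$ being immediate. Each $Q_i$ is a finitely generated free group or an orientable surface group, hence of type $F$, so every $G_j$, being built from the $Q_i$ by iterated extensions, is of type $F_\infty$; this gives hypothesis (b) of Theorem~\ref{thm:poly1} for $s=0$.

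To produce hypothesis (a) --- discrete characters $\widetilde\alpha_j\colon G\to\R$ with $\widetilde\alpha_j(G_j)=0$ and $\widetilde\alpha_j(G_{j+1})\neq 0$ --- I would use that the five-term exact sequence of $1\to G_j\to G_{j+1}\to Q_j\to 1$ with $\R$-coefficients identifies $H_1(Q_j;\R)$ with the cokernel of the inclusion-induced map $H_1(G_j;\R)\to H_1(G_{j+1};\R)$. Consequently the nested subspaces $V_j:=\operatorname{im}\bigl(H_1(G_j;\R)\to H_1(G;\R)\bigr)$ satisfy $\dim(V_{j+1}/V_j)\le b_1(Q_j)$, so $b_1(G)=\sum_{j=0}^{m-1}\dim(V_{j+1}/V_j)\le\sum_{j=0}^{m-1}b_1(Q_j)$. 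The hypothesis $H_1(G;\Z)\cong\bigoplus_i H_1(Q_i;\Z)$ equates free ranks, forcing equality throughout, hence $\dim(V_{j+1}/V_j)=b_1(Q_j)\ge 1$ (positivity since $Q_j$ is free of rank $\ge 1$ or a surface group); thus the flag $0=V_0\subsetneq V_1\subsetneq\cdots\subsetneq V_m=H_1(G;\R)$ is strict. As each $V_j$ is defined over $\Q$, one may choose for each $j$ a rational functional on $H_1(G;\R)$ vanishing on $V_j$ but not on $V_{j+1}$ and clear denominators, obtaining a discrete character $\widetilde\alpha_j$ as required. Taking $\phi:=\widetilde\alpha_0|_{G_1}$ --- a nonzero discrete character of $G_1$ that extends to $G$ and has $\operatorname{Ker}(\phi)$ trivially of type $F_0$ --- Theorem~\ref{thm:poly1} produces a discrete $\psi\colon G\to\R$ extending $\phi$ with $M:=\operatorname{Ker}(\psi)$ of type $F_{m-1}$; since $\psi$ is nonzero and discrete, after rescaling it is an algebraic fibration $G\to\Z$.

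For the non-finiteness statement, assume $\chi(G)\neq 0$ and suppose, toward a contradiction, that $M$ is of type $FP_m$, hence $FP_m(\Q)$. I would compute $b^{(2)}_\ast(G)$ as follows. First, since each $G_j$ is of type $FP_\infty$ and each $Q_j$ is infinite, iterating the standard vanishing theorem for $L^2$-Betti numbers of an extension with infinite quotient --- using at the $(k+1)$-st step that $b^{(2)}_p(G_k)=0$ for $p\le k-1$ --- gives $b^{(2)}_p(G)=0$ for $p\le m-1$. Second, to control degrees $>m$ we use that $G$ is \emph{either} poly-free \emph{or} poly-surface. In the poly-free case $\operatorname{cd}(G)\le m$, so $b^{(2)}_p(G)=0$ for $p\ge m+1$ as well. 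In the poly-surface case $G$ is a $PD_{2m}$ group (orientable surface groups are $PD_2$, and an extension of Poincar\'e duality groups is one of the additive dimension), so $L^2$-Poincar\'e duality gives $b^{(2)}_p(G)=b^{(2)}_{2m-p}(G)$, which vanishes for $p\ge m+1$ by the first step. Hence $b^{(2)}_p(G)=0$ for all $p\neq m$, and since $G$ has a finite $K(\pi,1)$ the $L^2$-Euler characteristic formula gives $b^{(2)}_m(G)=(-1)^m\chi(G)\neq 0$. On the other hand $M\trianglelefteq G$ with $G/M\cong\Z$ infinite amenable and $M$ of type $FP_m(\Q)$, so by L\"uck's vanishing theorem in the presence of an infinite amenable quotient, $b^{(2)}_p(G)=0$ for $0\le p\le m$, whence $b^{(2)}_m(G)=0$ --- a contradiction. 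Therefore $M$ is not $FP_m$.

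I expect the main obstacle to be the construction in the second paragraph: one must check that, although the isomorphism $H_1(G;\Z)\cong\bigoplus_i H_1(Q_i;\Z)$ is only abstract, the always-available five-term sequences together with the rank count genuinely force the filtration-compatible strict flag $V_0\subsetneq\cdots\subsetneq V_m$ from which the characters $\widetilde\alpha_j$ and $\phi$ are read off. A secondary technical point is the $L^2$-Betti bookkeeping: the iterated vanishing result must be invoked with the correct finiteness hypotheses on the $G_j$, and the poly-surface case genuinely needs $L^2$-Poincar\'e duality --- this is precisely where the hypothesis ``poly-free \emph{or} poly-surface'', rather than an arbitrary iterated extension, enters, since for poly-surface groups $\operatorname{cd}(G)=2m>m$. (In the poly-free case the non-finiteness has the quicker proof that $\operatorname{cd}(G)\le m$ makes an $FP_m$ subgroup $M$ of $G$ automatically of type $FP$, whence $\chi(G)=\chi(M)\,\chi(\Z)=0$.)
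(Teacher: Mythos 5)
Your finiteness argument follows the paper's own route: apply Theorem~\ref{thm:poly1} with $s=0$. Your construction of the characters $\widetilde\alpha_j$ (via the strict flag $V_0\subsetneq\cdots\subsetneq V_m$ extracted from the hypothesis on $H_1$ by counting ranks and using the five-term sequences) is a more carefully spelled-out version of what the paper asserts in one sentence, and it is correct; the paper instead phrases the equivalent fact as a direct sum splitting $H_1(G_{j+1};\Z)\cong H_1(G_j;\Z)\oplus H_1(Q_j;\Z)$ at each stage. Where you genuinely diverge is in the non-finiteness half. The paper argues elementarily: in the poly-free case $\operatorname{cd}(G)=m$ so an $FP_m$ subgroup $M\le G$ is automatically $FP$ and multiplicativity of Euler characteristics over the fibration $M\to G\to\Z$ forces $\chi(G)=0$; in the poly-surface case $G$ is a $PD(2m)$ group, and Hillman's criterion (a normal $FP_m$ subgroup with cyclic quotient in a $PD(2m)$ group is $PD(2m-1)$, hence $FP$) reduces to the same Euler characteristic contradiction. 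You instead invoke the $L^2$-Betti number package: Proposition~\ref{prop:l2}-style vanishing in low degrees, $L^2$-Poincar\'e duality (or $\operatorname{cd}$) above, the $L^2$-Euler characteristic identity to nail $b_m^{(2)}(G)=(-1)^m\chi(G)\ne 0$, and L\"uck's vanishing for fibrations over $\Z$ with $FP_m(\Q)$ kernel to derive a contradiction. This is a legitimate alternative --- indeed the paper explicitly notes in the remark after Proposition~\ref{prop:l2} that this $L^2$ route is available --- and it buys you the marginally stronger conclusion that $M$ is not even $FP_m(\Q)$. The trade-off is that it imports substantially more machinery (von Neumann dimensions, $L^2$-Poincar\'e duality, L\"uck's Theorem 7.2) where the paper uses only classical cohomological dimension, Euler characteristic multiplicativity, and, in the poly-surface case, one citation to Hillman; you should also be a little careful that the precise finiteness hypothesis in L\"uck's vanishing theorem for $1\to M\to G\to\Z\to 1$ is met, since the cleanest statements are phrased for type $F_n$ rather than type $FP_n(\Q)$.
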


We use Theorems \ref{thm:poly1} and  \ref{thm:poly} to obtain some results on the finiteness of subgroups of some families of poly-free groups, including the pure braid group $P(m+1)$ (and the closely related upper McCool group $P\Sigma^{+}_{m+1}$), as well as some polysurface groups that arise as fundamental groups of complex projective varieties, building on work by Llosa Isenrich and Py in \cite{LIP}.  We have the following:

\begin{corollary} \label{cor:pbg} Let $K(m+1)$ be the pure mapping class group of a sphere with $m+2$ punctures, and let $P(m+1)$ be the pure braid group on $m+1$ strands. Then for all $0 \leq n \leq m-2$ these groups contain an algebraic fiber of type $F_{n}$ but not $FP_{n+1}$. Furthermore, $P(m+1)$ contains an algebraic fiber of type $F_{\infty}$. \end{corollary}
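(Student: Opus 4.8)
The plan is to produce, inside each of $P(m+1)$ and $K(m+1)$, a poly‑free subgroup of length $n+1$ to which Theorem \ref{thm:poly} applies, and to handle the $F_\infty$ assertion separately by an elementary construction. First I would record the poly‑free structures. The iterated Fadell--Neuwirth fibrations exhibit $P(m+1)$ as poly‑free of length $m$: with $K_j:=\ker\big(P(m+1)\to P(m+1-j)\big)$ one obtains a normal filtration $1=K_0\unlhd K_1\unlhd\cdots\unlhd K_m=P(m+1)$ with free successive quotients $K_j/K_{j-1}\cong F_{m+1-j}$. Likewise, iterating the Birman exact sequence presents $K(m+1)=\mathrm{PMod}(S_{0,m+2})$ as poly‑free of length $m-1$, with filtration subgroups $L_j:=\ker\big(K(m+1)\to\mathrm{PMod}(S_{0,m+2-j})\big)$ and free quotients $L_j/L_{j-1}\cong F_{m+1-j}$; in fact $K_{n+1}$ and $L_{n+1}$ are isomorphic, both being the fundamental group of the ordered configuration space of $n+1$ points in the $(m-n)$‑punctured plane, and all the groups above are of type $F$.

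Now fix $0\le n\le m-2$ and put $G:=K_{n+1}$ (respectively $G:=L_{n+1}$), with its induced filtration of length $n+1$ and free successive quotients $Q_1,\dots,Q_{n+1}$ of ranks $m,m-1,\dots,m-n$. Since $n\le m-2$, every $Q_i$ has rank $\ge m-n\ge 2$, so $\chi(G)=\prod_{i=m-n}^{m}(1-i)\ne 0$. Because $G$ is the fundamental group of a configuration space of a punctured plane, i.e. of an arrangement‑type complement, $H_1(G;\Z)$ is free abelian; comparing its rank with $\sum_i\operatorname{rank}H_1(Q_i;\Z)$ shows these agree and hence that $H_1(G;\Z)\cong\bigoplus_i H_1(Q_i;\Z)$, which is exactly the hypothesis of Theorem \ref{thm:poly}. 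That theorem then yields an algebraic fibration $\psi\colon G\to\Z$ with $\ker\psi$ of type $F_n$ but not of type $FP_{n+1}$; since $\ker\psi\le G$ and $G$ is a subgroup of $P(m+1)$ (respectively of $K(m+1)$), this produces the asserted algebraic fiber.

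For the last statement, let $z_k$ generate the center $Z(P(k))\cong\Z$, regard $P(m)\le P(m+1)$ as the braids on the first $m$ strands, and set $\gamma:=z_{m+1}z_m^{-1}$. Then $\gamma$ commutes with $P(m)$, since $z_{m+1}$ is central in $P(m+1)$ and $z_m$ is central in $P(m)$; a computation in $H_1(P(m+1);\Z)$ shows $\langle\gamma\rangle\cong\Z$ meets $P(m)$ trivially, so $\langle\gamma\rangle\times P(m)\cong\Z\times P(m)$ embeds in $P(m+1)$. The projection onto the $\Z$‑factor is an algebraic fibration of $\Z\times P(m)$ with kernel $P(m)$, which is of type $F$, hence of type $F_\infty$; as $P(m)\le P(m+1)$, this is the required $F_\infty$ algebraic fiber.

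The step I expect to require the most care is the homological one in the second paragraph: one must know that $H_1(K_{n+1};\Z)$ ($\cong H_1(L_{n+1};\Z)$) is free abelian — this is where the concrete identification with a configuration‑space (arrangement‑complement) group, whose first homology is generated by the linking loops of the defining hyperplanes, does the work — and then carry out the rank count that forces the filtration to split over $\Z$. The remaining points are bookkeeping, apart from pinning down the precise free ranks in the Birman filtration; note that the hypothesis $n\le m-2$ is exactly what keeps every successive free quotient non‑cyclic, which in turn makes $\chi(G)$ nonzero and so activates the ``not $FP_{n+1}$'' half of Theorem \ref{thm:poly}.
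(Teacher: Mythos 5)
Your construction produces \emph{subgroups} of $P(m+1)$ and $K(m+1)$ of the right finiteness type, but not \emph{algebraic fibers} of those groups, which is what the corollary asserts. An ``algebraic fiber'' of $G$ is, in the paper's usage (see the Introduction and the paragraph just before the corollary, and compare the term ``cocyclic subgroup'' in Corollary \ref{cor:itko}), the kernel of a discrete character $G \to \Z$ -- i.e., a normal subgroup with quotient $\Z$. You apply Theorem \ref{thm:poly} to the proper normal subgroup $K_{n+1} = \ker\big(P(m+1) \to P(m-n)\big)$ (resp.\ $L_{n+1}$) and obtain a discrete character $\psi \colon K_{n+1} \to \Z$ with kernel of type $F_n$ but not $FP_{n+1}$. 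That $\ker\psi \le P(m+1)$ does not make it a cocyclic subgroup of $P(m+1)$: it is cocyclic in $K_{n+1}$, a subgroup of infinite index. The information this carries about the BNSR invariants of $P(m+1)$ itself -- which is the point of the corollary, as the paper remarks -- is lost. The same problem affects your $F_\infty$ argument: $P(m)$ is of type $F$, but it arises as the kernel of the coordinate projection of $\langle\gamma\rangle \times P(m)$, which (since $\gamma = z_{m+1}z_m^{-1}$ is not central in $P(m+1)$ and this subgroup has infinite index) is not a discrete character of $P(m+1)$.

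The paper's proof avoids this by working with characters of $K(m+1)$ and $P(m+1)$ directly. It argues by induction on $m$: for $n = m-2$, Theorem \ref{thm:poly} is applied to $K(m+1)$ itself; for $0 \le n \le m-3$, a discrete character $\phi_n$ of $K(m)$ with kernel of type $F_n$ but not $FP_{n+1}$ (obtained by induction) is pulled back along the strand-forgetting projection $\pi \colon K(m+1) \twoheadrightarrow K(m)$, and the kernel of $\phi_n \circ \pi$ sits in a sequence $1 \to F_m \to \ker(\phi_n\circ\pi) \to \ker(\phi_n) \to 1$ with $F_\infty$ kernel, hence inherits exactly the finiteness properties of $\ker(\phi_n)$. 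Finally, $P(m+1) = K(m+1) \times \Z$ with the $\Z$-factor central, and any discrete character nontrivial on the center has kernel isomorphic to a finite-index subgroup of $K(m+1)$, hence $F_\infty$. Note that you cannot patch your argument simply by extending $\psi$ from $K_{n+1}$ to a character $\tilde\psi$ of $P(m+1)$: then $\ker\tilde\psi$ is an extension of $\ker\psi$ by a subgroup of $P(m-n)$, which has the $F_\infty$ group on the \emph{quotient} side of the sequence, so the failure of $FP_{n+1}$ for $\ker\psi$ does not transfer to $\ker\tilde\psi$ -- this is precisely why the paper pulls back in the direction it does. Separately, your concern about whether $H_1(K_{n+1};\Z)$ is free abelian can be sidestepped: the paper verifies the hypothesis $H_1(P(m+1);\Z) \cong \bigoplus_i H_1(G_{i+1}/G_i;\Z)$ by a rank count on $H_1(P(m+1);\Z) = \Z^{\binom{m+1}{2}}$, and since this condition is equivalent to the stepwise splitting $H_1(G_{j+1}) \cong H_1(G_j) \oplus H_1(G_{j+1}/G_j)$, it propagates automatically to every initial segment of the filtration.
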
 

\begin{corollary} \label{cor:itko}
For all $m \geq 0$ there exists a complex projective variety $X(m)$ with 
$\operatorname{dim}_{\C}(X(m)) = m$ whose fundamental group $\Pi(m)$ is a poly-surface group of length $m$ and nonzero Euler characteristic which admits, for all $0 \leq n \leq m-1$, a cocylic subgroup of type $F_n$ but not $FP_{n+1}$. \end{corollary}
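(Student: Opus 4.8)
The plan is to realize $X(m)$ as an iterated holomorphic surface bundle and then read the required cocyclic subgroups off Theorem~\ref{thm:poly}. First I would invoke the construction of Llosa Isenrich and Py \cite{LIP} to produce, for each $m\ge 1$, a smooth complex projective variety $X(m)$ of complex dimension $m$ together with a tower of holomorphic fibre bundles
\[
X(m)\longrightarrow X(m-1)\longrightarrow \cdots \longrightarrow X(1)\longrightarrow X(0)=\{\ast\},
\]
where each map $X(j)\to X(j-1)$ has fibre a closed orientable surface $\Sigma_{g_j}$ of genus $g_j\ge 2$. All the bases in the tower are aspherical, so each stage yields a short exact sequence $1\to\pi_1(\Sigma_{g_j})\to\pi_1(X(j))\to\pi_1(X(j-1))\to 1$; hence $\Pi(m):=\pi_1(X(m))$ carries the normal filtration $1=G_0\unlhd G_1\unlhd\cdots\unlhd G_m=\Pi(m)$ with $G_i=\ker\big(\Pi(m)\to\pi_1(X(m-i))\big)$ and $G_i/G_{i-1}\cong\pi_1(\Sigma_{g_{m-i+1}})$. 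In particular $\Pi(m)$ is a poly-surface group of length $m$, and by multiplicativity of the Euler characteristic $\chi(\Pi(m))=\prod_{j=1}^m(2-2g_j)\ne 0$. After passing, if necessary, to finite covers at each stage of the tower — which preserves all of the above — I would arrange that $H_1(\Pi(m);\Z)\cong\bigoplus_{i=0}^{m-1}H_1(G_{i+1}/G_i;\Z)$, and that the same splitting holds for $\pi_1(X(j))$ for every $j$; this homological splitting is exactly the hypothesis that feeds the tower into Theorem~\ref{thm:poly}.

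Now fix $n$ with $0\le n\le m-1$. Since the maps in the tower are $\pi_1$-surjective, the composite $\Pi(m)\to\pi_1(X(n+1))$ is surjective with kernel $G_{m-n-1}$, giving $q_n\colon\Pi(m)\twoheadrightarrow\Pi(m)/G_{m-n-1}\cong\pi_1(X(n+1))=:Q_n$. The kernel $G_{m-n-1}$ is the fundamental group of the fibre of $X(m)\to X(n+1)$, itself an iterated surface bundle, so $G_{m-n-1}$ is a poly-surface group of length $m-n-1$, and in particular of type $F_\infty$. The group $Q_n$ is a poly-surface group of length $n+1$, it satisfies the homological splitting (being $\cong\pi_1(X(n+1))$), and $\chi(Q_n)=\prod_{j=1}^{n+1}(2-2g_j)\ne 0$. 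Applying Theorem~\ref{thm:poly} to $Q_n$ therefore produces an algebraic fibration $\phi_n\colon Q_n\to\Z$ whose kernel is of type $F_n$ but not of type $FP_{n+1}$.

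Finally set $\psi_n:=\phi_n\circ q_n\colon\Pi(m)\to\Z$, an epimorphism, and let $M_n:=\ker\psi_n$, a cocyclic subgroup of $\Pi(m)$. Then $M_n=q_n^{-1}(\ker\phi_n)$ fits into the short exact sequence
\[
1\longrightarrow G_{m-n-1}\longrightarrow M_n\longrightarrow \ker\phi_n\longrightarrow 1 .
\]
Since $G_{m-n-1}$ is of type $F_\infty$ and $\ker\phi_n$ is of type $F_n$, the standard behaviour of the finiteness properties $F_n$ under extensions gives that $M_n$ is of type $F_n$; and since $G_{m-n-1}$ is of type $FP_\infty$, the extension $M_n$ is of type $FP_{n+1}$ if and only if $\ker\phi_n$ is — which it is not. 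Hence $M_n$ is a cocyclic subgroup of $\Pi(m)$ of type $F_n$ but not $FP_{n+1}$, which is what we want. The main obstacle lies in the very first step: extracting from \cite{LIP} a projective tower for which the homological splitting $H_1\cong\bigoplus H_1(G_{i+1}/G_i)$ holds. This is not automatic for surface bundles over surfaces — the monodromy action on the homology of a fibre, and the $H_2$-level term in the associated five-term exact sequence, must both be controlled — and arranging it typically forces one to pass to carefully chosen finite covers at each stage of the tower; everything after that is bookkeeping with the long exact sequences of group extensions.
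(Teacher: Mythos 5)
There is a genuine gap at the very point you flag as the main obstacle, and it is not a gap that can be patched in the way you suggest. You invoke Theorem~\ref{thm:poly}, which requires the full homological splitting $H_1(G;\Z)\cong\bigoplus_{i}H_1(G_{i+1}/G_i;\Z)$; for a tower of surface bundles this is equivalent to asking that at every stage the monodromy representation $\pi_1(\text{base})\to\operatorname{Out}(\pi_1(\text{fibre}))$ be Torelli (trivial on $H_1$ of the fibre). But the Atiyah--Kodaira fibration, and its iterations in \cite{LIP}, have nonzero signature by design, and the Meyer signature cocycle shows that nonzero signature forces the monodromy to act nontrivially on the fibre's first homology. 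Moreover this persists under passage to finite covers: if $X'\to X$ is a compatible finite cover of degree $d$, then $\sigma(X')=d\,\sigma(X)\neq 0$, so the covered fibration is still not Torelli. Thus there is no finite cover of the LIP tower for which the hypothesis of Theorem~\ref{thm:poly} holds, and your first step cannot be carried out.

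The paper sidesteps this precisely by not using Theorem~\ref{thm:poly}. It instead applies Theorem~\ref{thm:poly1}, whose hypothesis a') is only \emph{excessive homology} at each stage — i.e.\ that \emph{some} part of $H_1$ of the fibre survive to the base of the filtration — and the Atiyah--Kodaira / Llosa Isenrich--Py construction is built so that the sequences $1\to\Gamma_{j+1}\to\Pi(m-j)\to\Pi(m-j-1)\to 1$ do have excessive homology (via the epimorphism $\pi_1$ gets from a nontrivially ramified double cover). Once you replace your appeal to Theorem~\ref{thm:poly} by Theorem~\ref{thm:poly1}, and verify excessive homology of these sequences rather than the full Torelli splitting, the rest of your argument — pulling back a discrete character from the quotient $\Pi(n+1)$, noting that the kernel is an extension of a type-$F_\infty$ poly-surface group by $\ker\phi_n$, and concluding the finiteness properties — is sound and matches the paper's inductive scheme. (Your proposal as written would instead force you to fall back to direct products $\Sigma_{g_1}\times\cdots\times\Sigma_{g_m}$, which do satisfy Theorem~\ref{thm:poly} and would technically prove the corollary, but that is the ``trivially extended'' case the paper is explicitly constructed to go beyond.)
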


In Section \ref{pfs} we compare the finiteness results of Corollaries \ref{cor:pbg} and \ref{cor:itko} with those of virtually RFRS groups developed by Fisher and Kielak \cite{Fisher}, \cite{Kielak}. 

\subsection*{Acknowledgement} We would like to thank Robert Kropholler, Claudio Llosa Isenrich, Matt Zaremsky and the referee for many useful comments.  The first  author was partially supported by Bolsa de produtividade em pesquisa CNPq 305457/2021-7 and Projeto tem\'atico FAPESP 18/23690-6. The second author was partially supported by the Simons Foundation Collaboration Grant For Mathematicians 524230.

\section{Preliminaries} \label{prel}

 The BNSR-invariants $\Sigma^n(G)$ and $\Sigma^n(G, \mathbb{Z})$ were defined in a sequence of papers. The most general version of $\Sigma^1(G)$ was defined in \cite{B-N-S}, $\Sigma^n(G, \mathbb{Z})$ was defined in \cite{B-R} and the homotopical version $\Sigma^n(G)$ was first considered in the PhD thesis \cite{Renzthesis}. All these invariants are subsets of the character sphere $S(G) = Hom(G, \mathbb{R}) \setminus \{ 0 \} / \sim$ where for $\chi_1, \chi_2 \in  Hom(G, \mathbb{R}) \setminus \{ 0 \}$ we have $\chi_1 \sim \chi_2$ if  there is  a positive real number $r$ such that $\chi_2 = r \chi_1$. The image of $\chi$ in $S(G)$ is denoted by $[\chi]$. An element $\chi \in  Hom(G, \mathbb{R}) \setminus \{ 0 \}$ is called a character.
 
 By definition  $$\Sigma^n(G, \mathbb{Z}) = \{[\chi] \in S(G) \mid \mathbb{Z} \textrm{ is of type $FP_n$ as $\mathbb{Z} G_{\chi}$-module}\},$$
 where  $G_{\chi} = \{ g \in G \ | \ \chi(g) \geq 0 \}$.
 If $G$ is not of type $FP_n$ then $\Sigma^n(G, \mathbb{Z}) = \emptyset$.
 
 Suppose that $G$ is of type $F_n$ and $\Gamma_n$ is the $n$-skeleton of the universal cover of some $K(G,1)$, where the $K(G,1)$ has finite $n$-skeleton. For simplicity we can assume that $K(G,1)$ has just one vertex and that the set of vertices of $\Gamma_n$ is precisely $G$. Thus $\Gamma = \Gamma_1$ is the Cayley complex with respect to some finite generating set of $G$.  Let $\chi$ be a character of $G$. Then $(\Gamma_n)_{\chi}$ denotes the subcomplex of $\Gamma_n$ spanned by the vertices in $G_{\chi}$. By definition
 $$
\Sigma^1(G) = \{ [\chi] \in S(G) \mid \Gamma_{\chi} \hbox{ is a connected graph}\}
$$ 
and for $n \geq 2$
$$
\Sigma^n(G) = \{ [\chi] \in S(G) \mid (\Gamma_n)_{\chi} \hbox{ is } (n-1)\hbox{-connected for some choice of } \Gamma_n \}.
$$
The above definition looks artificial as  for $n \geq 2$ it depends on the choice of $\Gamma_n$ but independantly from the choice of $\Gamma_n$ we have that $[\chi] \in \Sigma^n(G)$ if and only if there is $d < 0$ such that the map $\pi_1((\Gamma_n)_{\chi}) \to \pi_1((\Gamma_n)_{\chi \geq d})$ has trivial image, where $(\Gamma_n)_{\chi \geq d}$ denotes the subcomplex of $\Gamma_n$ spanned by the vertices in $G_{\chi \geq d} = \{ g \in G \ | \ \chi(g) \geq d \}$.
 
  The importance of the $\Sigma$-invariants lie in the following result.

\begin{theorem} \label{criterion} 
a) \cite{B-R} Let $G$ be a group of type $FP_m$ and $H$ be a subgroup of $G$
containing the commutator. Then $H$ is of type $FP_m$ if and only if
$S(G, H) = \{[\chi] \in S(G) \ | \ \chi(H) = 0 \} \subseteq  \Sigma^m(G, \mathbb{Z}).$

b) Let $G$ be a group of type $F_m$ and $H$ be a subgroup of $G$
containing the commutator. Then $H$ is of type $F_m$ if and only if
$S(G, H) = \{[\chi] \in S(G) \ | \ \chi(H) = 0 \} \subseteq  \Sigma^m(G).$
\end{theorem}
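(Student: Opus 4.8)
This is the Bieri--Renz finiteness criterion: part (a) is proved in \cite{B-R}, and part (b) is the homotopical refinement contained in Renz's thesis \cite{Renzthesis}, so the proof consists in invoking these. For a self-contained argument the plan would be as follows. First one reduces to the case $G/H \cong \Z^{r}$: since $H \supseteq G'$ the quotient $G/H$ is abelian, and replacing $H$ by the preimage $\widehat{H}$ of its torsion subgroup changes neither the finiteness type (as $\widehat{H}/H$ is finite) nor the subsphere $S(G,H) = S(G,\widehat{H})$ (every character kills torsion). Then one inducts on $r$. For $r = 0$ the subgroup $H$ has finite index, $S(G,H) = \emptyset$, and the equivalence is immediate. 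For $r = 1$ one has $H = \ker\chi$ for a discrete character $\chi$ and $S(G,H) = \{[\chi],[-\chi]\}$, so the assertion reduces to the basic fact that $\{[\chi],[-\chi]\} \subseteq \Sigma^{m}(G,\Z)$ (resp.\ $\Sigma^{m}(G)$) if and only if $\ker\chi$ is $FP_{m}$ (resp.\ $F_{m}$); this is established by splitting the relevant cover of a $K(G,1)$ --- its cellular chain complex, resp.\ its $m$-skeleton $\Gamma_{m}$ --- as a Mayer--Vietoris pushout of the half-subcomplexes $(\Gamma_{m})_{\chi \geq 0}$ and $(\Gamma_{m})_{\chi \leq 0}$ glued along the intermediate subcomplex $(\Gamma_{m})_{\chi = 0}$, which is a cover of $K(H,1)$.

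For $r \geq 2$ the two main inputs are the openness of $\Sigma^{m}(G,\Z)$ (resp.\ $\Sigma^{m}(G)$) in $S(G)$ and the inductive hypothesis applied to proper corank-one subgroups. Given $S(G,H) \subseteq \Sigma^{m}$, one covers the compact subsphere $S(G,H) \cong S^{r-1}$ by finitely many open hemispheres carrying finiteness data over the monoid rings $\Z G_{\chi}$; then, choosing a rational character $\chi$ with $[\chi] \in S(G,H)$ and setting $N = \ker\chi \supseteq H$ (so that $N/H \cong \Z^{r-1}$), one transports these data across the monoid rings $\Z G_{\chi}$, $\Z G_{-\chi}$ and $\Z N$ to conclude that $S(N,H) \subseteq \Sigma^{m}(N,\Z)$ (resp.\ $\Sigma^{m}(N)$), whence the inductive hypothesis for $N$ gives that $H$ is $FP_{m}$ (resp.\ $F_{m}$). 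The converse implication follows from the same comparison run in the opposite direction, together with the computation of $\Sigma^{m}$ for free abelian groups and the freeness of $\Z G_{\chi}$ over $\Z H$, which let one induce finiteness data on $H$ up to $\Z G_{\chi}$.

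The step that requires real care --- and the reason part (b) does not follow formally from part (a) and the fact that type $F_{m}$ is equivalent to type $FP_{m}$ together with finite presentability --- is that $\Sigma^{m}(G)$ is strictly contained in $\Sigma^{m}(G,\Z)$ already for $m = 2$; consequently, in the homotopical case every gluing above has to be carried out at the level of fundamental groups and finite presentations rather than merely on chain complexes. Supplying exactly this homotopical bookkeeping is what \cite{Renzthesis} provides, and it is what we invoke for part (b); part (a) is invoked from \cite{B-R}.
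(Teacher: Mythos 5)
The paper proves Theorem \ref{criterion} simply by citation --- part (a) is attributed to \cite{B-R} and part (b) is the homotopical version that originates in \cite{Renzthesis} --- and your proposal correctly does the same, so on that score you match the paper exactly. Your added sketch of a self-contained argument (reduction to $G/H\cong\Z^r$ via the torsion preimage, the corank-one case by the Mayer--Vietoris splitting of the cover along $(\Gamma_m)_{\chi=0}$, and the induction on corank using openness of $\Sigma^m$) is a fair outline of the Bieri--Renz strategy, but it is supplementary material beyond what the paper provides.
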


We state several results that would be used in the proofs of our main results.

\begin{theorem} \cite{B-R} \label{open} Let $G$ be a finitely generated group. Then $\Sigma^m(G, \mathbb{Z})$ is open in $S(G)$. If $G$ is of type $F_m$ then $\Sigma^m(G)$ is open in $S(G)$.
\end{theorem}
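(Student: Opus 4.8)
The plan is to prove both statements in parallel by the compactness argument of Bieri and Renz. Since a subset of $S(G)$ is open exactly when its (scale-invariant) preimage in $\operatorname{Hom}(G,\mathbb{R}) \setminus \{0\}$ is open, I work there. If $G$ is not of type $FP_m$ (resp. $F_m$), then $\Sigma^m(G,\mathbb{Z}) = \emptyset$ (resp. $\Sigma^m(G) = \emptyset$) by definition, so assume $G$ is of type $FP_m$ (resp. $F_m$). Fix a free resolution $P_\bullet \to \mathbb{Z}$ of $\mathbb{Z}G$-modules with $P_0,\dots,P_m$ finitely generated free and a $\mathbb{Z}G$-basis chosen in each (resp. a $K(G,1)$ with finite $m$-skeleton and one vertex, the $m$-skeleton $\Gamma_m$ of whose universal cover has vertex set $G$ as above, together with a finite transversal for the $G$-orbits of cells in each dimension $\leq m$). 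A character $\chi$ then induces a valuation on $P_\bullet$, hence a filtration $\{P_\bullet^{\chi \geq t}\}_{t \in \mathbb{R}}$ by chain subcomplexes (resp. the filtration $\{(\Gamma_m)_{\chi \geq t}\}$ by the subcomplexes spanned by the vertices of $\chi$-value $\geq t$). The reformulation I would use is that of Bieri--Renz: $[\chi] \in \Sigma^m(G,\mathbb{Z})$ if and only if this filtration is \emph{essentially $(m-1)$-acyclic}, meaning that for each $-1 \leq k \leq m-1$ and each $t$ there is $s \leq t$ with $\tilde H_k(P_\bullet^{\chi \geq t}) \to \tilde H_k(P_\bullet^{\chi \geq s})$ the zero map; and $[\chi] \in \Sigma^m(G)$ if and only if, in addition, the level subcomplexes $(\Gamma_m)_{\chi \geq t}$ are essentially simply connected, which is precisely the $\pi_1$-criterion recorded above.

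The core of the proof is a finiteness observation. Suppose $[\chi_0]$ lies in the invariant; I want a neighborhood of $\chi_0$ contained in it. Fix $k$ with $-1 \leq k \leq m-1$. Because multiplication by $g \in G$ carries $P_\bullet^{\chi_0 \geq t}$ onto $P_\bullet^{\chi_0 \geq t + \chi_0(g)}$, essential $k$-acyclicity witnessed at one threshold propagates, with a \emph{uniform} gap $\delta_k > 0$, to every threshold. Using that $P_\bullet$ has finite type in degrees $\leq m$ (resp. that $\Gamma_m$ has finitely many $G$-orbits of cells in those degrees), the finitely many basis $k$-cycles generating $\tilde H_k$ near a fixed level can be bounded by $(k+1)$-chains supported in a single \emph{finite} subcomplex $C_k$ lying above level $t - \delta_k$; $G$-translating $C_k$ then furnishes the bounding data at all thresholds simultaneously.

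Finally I would perturb. The subcomplex $C_k$ meets only finitely many vertices $g_1,\dots,g_r \in G$, and for each fixed $g_j$ the evaluation $\chi \mapsto \chi(g_j)$ is linear, hence continuous, on $\operatorname{Hom}(G,\mathbb{R})$. So there is a neighborhood $U_k$ of $\chi_0$ such that for every $\chi \in U_k$ the $\chi$-heights of all cells of $C_k$ differ from their $\chi_0$-heights by less than $\delta_k/2$; the same bounding chains then witness essential $k$-acyclicity for $\chi$, with gap, say, $2\delta_k$, and equivariance again upgrades this to all thresholds. Intersecting the finitely many neighborhoods $U_{-1},\dots,U_{m-1}$ — together, in the homotopical case, with the analogous neighborhood extracted from the $\pi_1$-condition, since a null-homotopy of a loop in a level set involves only a finite subcomplex — produces a neighborhood of $\chi_0$ contained in $\Sigma^m(G,\mathbb{Z})$ (resp. $\Sigma^m(G)$), proving openness. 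The step I expect to be the main obstacle is the finiteness observation of the preceding paragraph: making precise, with careful bookkeeping of the filtration thresholds and their interaction with the $G$-action, that all the essential-acyclicity data can be compressed into one finite subcomplex uniformly over all levels. Once that compression is in place the perturbation step is soft, and the passage between the homological criterion and the homotopical one — where $\pi_1$, not merely $H_1$, of the level sets must be controlled — is handled by the same reasoning via van Kampen and Hurewicz.
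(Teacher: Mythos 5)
The paper does not prove Theorem~\ref{open}: it is a result imported verbatim from Bieri and Renz \cite{B-R} (with the homotopical case going back to Renz's thesis \cite{Renzthesis}), and the paper uses it as a black box. There is consequently no internal proof to compare your sketch against.

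That said, your outline is a fair pr\'ecis of the actual Bieri--Renz argument: recast membership in $\Sigma^m(G,\mathbb{Z})$ (resp.\ $\Sigma^m(G)$) as essential $(m-1)$-acyclicity (resp.\ essential $(m-1)$-connectedness) of the level filtration of a free resolution of finite $m$-type (resp.\ of the $m$-skeleton $\Gamma_m$), exploit $G$-equivariance to propagate a certificate from one threshold to all thresholds, compress the certificate into a finite subcomplex, and observe that the $\chi$-heights of the finitely many group elements involved vary continuously with $\chi$. You are also right to single out the compression step as the one carrying the real content. Two points deserve more care if this were to be made rigorous. First, after perturbing $\chi_0$ to $\chi$ the level sets change, so one cannot literally reuse the ``same bounding chains'' as you state; what one actually uses is the sandwiching $P_\bullet^{\chi \geq t+\epsilon} \subseteq P_\bullet^{\chi_0 \geq t} \subseteq P_\bullet^{\chi \geq t-\epsilon}$, valid on the finite certificate for $\chi$ near $\chi_0$, to transfer the boundings with a slightly degraded gap. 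Second, in the homotopical case one must argue that a null-homotopy of a loop in a level set is itself supported on a finite subcomplex; you appeal to this implicitly, but it should be spelled out (it follows from compactness of the disk), and the passage from the $\pi_1$-condition plus homological acyclicity to full $(m-1)$-connectedness via Hurewicz is exactly where $\Sigma^m(G) = \Sigma^m(G,\mathbb{Z}) \cap \Sigma^2(G)$ enters.
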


We observe that if we use the notation $[\chi]$ it presumes that $\chi$ is non-trivial. 

\begin{theorem} \cite{S1} \label{SS} Suppose that $G$ acts on a tree $T$ such that $T$ is
finite modulo the action of $G$. 

i) If $n \geq 1$, if $[\chi_v] \in \Sigma^n
(G_v , \mathbb{Z} )$
 for all vertices $v$ of $T$, and if  $[\chi_e] \in \Sigma^{n-1}(G_e, \mathbb{Z})$
  for all edges $e$ of $T$,  then $[\chi] \in \Sigma^n
(G, \mathbb{Z}).$

ii) If $n \geq 0$, if $[\chi] \in \Sigma^n
(G, \mathbb{Z} )$, and if $[\chi_e] \in \Sigma^n (G_e, \mathbb{Z} )$ for all edges $e$ of $T$ , then $[\chi_v] \in \Sigma^n(G_v , \mathbb{Z})$ for all vertices $v$ of $T$.

iii) If $n \geq 1$, if $[\chi] \in \Sigma^n(G, \mathbb{Z})$, and if $[\chi_v] \in \Sigma^{ n-1} (G_v , \mathbb{Z} )$ for all vertices $v$ of
$T$ and $\chi_e \not= 0$ for all edges $e$ of $T$, then $[\chi_e] \in \Sigma^{n-1} (G_e, \mathbb{Z} )$ for all edges $e$ of $T$.
\end{theorem}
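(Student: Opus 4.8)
The plan is to deduce all three parts from a single short exact sequence of modules over the monoid ring $\mathbb{Z}G_\chi$, coming from Bass--Serre theory, combined with a Shapiro-type change-of-rings lemma and the standard homological algebra of property $FP_k$. Recall that, by definition, $[\chi]\in\Sigma^n(G,\mathbb{Z})$ means exactly that $\mathbb{Z}$ is of type $FP_n$ as a $\mathbb{Z}G_\chi$-module, and (as in the remark preceding the statement) I read any condition ``$[\chi_\sigma]\in\Sigma^k(G_\sigma,\mathbb{Z})$'' as silently including $\chi_\sigma\neq 0$. After replacing $T$ by its barycentric subdivision we may assume $G$ acts without inversions; cocompactness then gives finitely many orbit representatives $v_1,\dots,v_p$ of vertices and $e_1,\dots,e_q$ of edges, with stabilizers $G_{v_i}$ and $G_{e_j}$. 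Since the tree $T$ is contractible, its augmented simplicial chain complex is a short exact sequence of $\mathbb{Z}G$-modules
\[
0 \longrightarrow \bigoplus_{j=1}^{q}\mathbb{Z}[G/G_{e_j}] \longrightarrow \bigoplus_{i=1}^{p}\mathbb{Z}[G/G_{v_i}] \longrightarrow \mathbb{Z} \longrightarrow 0 ,
\]
which we view, by restriction of scalars, as a short exact sequence of $\mathbb{Z}G_\chi$-modules; call its three terms $A$, $B$, $\mathbb{Z}$.

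The key lemma, in the spirit of Bieri--Strebel and Bieri--Renz, is the following: if $H\leq G$ and $\chi|_H\neq 0$, then $G=G_\chi\cdot H$, since $\chi(H)$ is a nonzero and hence unbounded-below subgroup of $\mathbb{R}$. Consequently $G_\chi$ acts transitively on the coset space $G/H$ with point stabilizer $G_\chi\cap H=H_{\chi|_H}$, so that $\mathbb{Z}[G/H]\cong \mathbb{Z}G_\chi\otimes_{\mathbb{Z}H_{\chi|_H}}\mathbb{Z}$ as left $\mathbb{Z}G_\chi$-modules, and moreover $\mathbb{Z}G_\chi$ is free as a right $\mathbb{Z}H_{\chi|_H}$-module (choose coset representatives for $G/H$; this is where one uses that $H=G_{v_i}$ or $G_{e_j}$ is finitely generated, so $\chi(H)$ is a finitely generated subgroup of $\mathbb{R}$). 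It follows that $\mathbb{Z}[G/H]$ is of type $FP_k$ over $\mathbb{Z}G_\chi$ if and only if $\mathbb{Z}$ is of type $FP_k$ over $\mathbb{Z}H_{\chi|_H}$, i.e.\ if and only if $[\chi|_H]\in\Sigma^k(H,\mathbb{Z})$. Conversely, if $\chi|_H=0$ one checks that $G/H$ has infinitely many $G_\chi$-orbits (distinguished by their minimal $\chi$-value), so $\mathbb{Z}[G/H]$ is not even finitely generated over $\mathbb{Z}G_\chi$; hence $\mathbb{Z}[G/H]$ being of type $FP_0$ over $\mathbb{Z}G_\chi$ already forces $\chi|_H\neq 0$.

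Granting this dictionary, the three parts follow from the ``two out of three'' lemma for $FP_k$ applied to $0\to A\to B\to \mathbb{Z}\to 0$ (if $A$ is $FP_{k-1}$ and $B$ is $FP_k$ then $\mathbb{Z}$ is $FP_k$; if $A$ and $\mathbb{Z}$ are $FP_k$ then $B$ is $FP_k$; if $B$ is $FP_k$ and $\mathbb{Z}$ is $FP_{k+1}$ then $A$ is $FP_k$) together with the fact that a finite direct sum is $FP_k$ iff each summand is. For (i): by the lemma the hypotheses say $B$ is $FP_n$ and $A$ is $FP_{n-1}$ over $\mathbb{Z}G_\chi$, hence $\mathbb{Z}$ is $FP_n$, i.e.\ $[\chi]\in\Sigma^n(G,\mathbb{Z})$. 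For (ii): the hypotheses say $A$ and $\mathbb{Z}$ are $FP_n$ over $\mathbb{Z}G_\chi$, hence $B$ is $FP_n$, hence each $\mathbb{Z}[G/G_{v_i}]$ is $FP_n$ over $\mathbb{Z}G_\chi$; by the converse part of the lemma each $\chi_{v_i}\neq 0$, and then $[\chi_{v_i}]\in\Sigma^n(G_{v_i},\mathbb{Z})$. For (iii): the hypotheses say $B$ is $FP_{n-1}$ and $\mathbb{Z}$ is $FP_n$ over $\mathbb{Z}G_\chi$, hence $A$ is $FP_{n-1}$, hence each $\mathbb{Z}[G/G_{e_j}]$ is $FP_{n-1}$ over $\mathbb{Z}G_\chi$, so (using the hypothesis $\chi_e\neq 0$ and the lemma) $[\chi_{e_j}]\in\Sigma^{n-1}(G_{e_j},\mathbb{Z})$.

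I expect the only real work to be in the Shapiro-type lemma, and precisely in the freeness (or at least faithful flatness, so that $FP_k$ can be both transported and detected) of $\mathbb{Z}G_\chi$ as a one-sided module over $\mathbb{Z}H_{\chi|_H}$: one must show that, after choosing left coset representatives $\{g_\lambda\}$ for $G/H$, the sets $g_\lambda\cdot H_{\geq -\chi(g_\lambda)}$ partition $G_\chi$ and each is a free rank-one right $H_{\chi|_H}$-set, which uses the structure of the finitely generated subgroup $\chi(H)\leq\mathbb{R}$. Once this is in place, everything else — the Bass--Serre sequence and the $FP_k$ bookkeeping — is formal, and the role of the standing assumption $\chi_\sigma\neq 0$ (and of the explicit hypothesis $\chi_e\neq 0$ in (iii)) is exactly to make the lemma applicable.
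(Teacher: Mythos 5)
The paper does not give its own argument for this statement: it is cited verbatim from Schmitt's Diplomarbeit \cite{S1} (and the commented-out remark in the source points to the appendix of Meier--Meinert--VanWyk as a published reference), so there is no in-paper proof to compare against. Evaluating your proposal on its own merits: the plan --- restrict the exact augmented chain complex
$0 \to \bigoplus_j \mathbb{Z}[G/G_{e_j}] \to \bigoplus_i \mathbb{Z}[G/G_{v_i}] \to \mathbb{Z} \to 0$
of the contractible $G$-finite tree to $\mathbb{Z}G_\chi$, feed it to a Shapiro-type change-of-rings lemma, and conclude via the two-out-of-three lemma for $FP_k$ --- is the standard route and it does work. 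The $FP_k$ bookkeeping in (i), (ii), (iii) is correct, including the observation that when $\chi|_H = 0$ the module $\mathbb{Z}[G/H]$ fails to be finitely generated over $\mathbb{Z}G_\chi$, and the observation (needed for (ii)) that one must establish $\chi_v\neq 0$ before asserting $[\chi_v]\in\Sigma^n(G_v,\mathbb{Z})$ --- though here a more direct route is available: every vertex of the connected tree is incident to an edge $e$ with $G_e \le G_v$, and $\chi_e\neq 0$ already forces $\chi_v\neq 0$.

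The one genuine overclaim is the assertion that $\mathbb{Z}G_\chi$ is \emph{free} as a right $\mathbb{Z}H_{\chi|_H}$-module, with the sets $g_\lambda H_{\chi\ge -\chi(g_\lambda)}$ as free rank-one orbits. This is false in general. For a coset representative $g_\lambda$, one has $G_\chi\cap g_\lambda H = g_\lambda H_{\chi\ge -\chi(g_\lambda)}$, and $H_{\chi\ge -c}$ is a free rank-one right $H_{\chi|_H}$-set only when $-c$ is attained in $\chi(H)\cap[-c,\infty)$, i.e.\ essentially when $-\chi(g_\lambda)\in\chi(H)$. When $\chi(H)$ is a dense (finitely generated but not cyclic) subgroup of $\mathbb{R}$ and $\chi(g_\lambda)\notin\chi(H)$, the set $\chi(H)\cap[-c,\infty)$ has no minimum and $H_{\chi\ge -c}=\bigcup_{d\in\chi(H),\,d\ge -c} H_{\chi\ge d}$ is a strictly increasing directed union of right $H_{\chi|_H}$-cosets; $\mathbb{Z}[H_{\chi\ge -c}]$ is then a filtered colimit of free rank-one modules, hence \emph{flat} but not free. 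Note also that finite generation of $H$ is irrelevant here: a finitely generated subgroup of $\mathbb{R}$ of rank $\ge 2$ is still dense. Fortunately your hedge to faithful flatness is the right fix: $\mathbb{Z}G_\chi=\bigoplus_\lambda \mathbb{Z}[G_\chi\cap g_\lambda H]$ is a direct sum of flat right $\mathbb{Z}H_{\chi|_H}$-modules, one of which (for $g_\lambda$ the identity coset) is $\mathbb{Z}H_{\chi|_H}$ itself, so $\mathbb{Z}G_\chi$ is faithfully flat. Flatness gives the transport direction of the Shapiro lemma, and faithful flatness gives detection (by the usual induction: $S\otimes_R M$ finitely generated over $S$ forces $M$ finitely generated over $R$ by flat base-change of a finitely generated submodule and faithfulness, then Schanuel passes to kernels). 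So the argument is salvageable as written once you replace ``free'' by ``flat with $\mathbb{Z}H_{\chi|_H}$ as a direct summand''; I would encourage you to spell out this descent of $FP_k$ along a faithfully flat monoid-ring extension rather than appeal to a freeness that does not hold.
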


 A more general homotopical version of item i) from the above theorem is stated below. In particular  there is a homotopical version of item i) for $n = 2$. We think of a tree as a $2$-complex without 2-cells. 

\begin{theorem} \cite{M}
Let a group $G$ act on a $l$-connected $G$-finite $2$-complex $X$. If $\chi :  G \to \mathbb{R}$ is a homomorphism such that $0 \not= \chi |_{G_{\sigma}},$  $ [\chi |_{G_{\sigma}}]  \in \Sigma^{ 2 - dim(\sigma)}(G_{\sigma})$ for all cells $\sigma$ of $X$, then $[\chi] \in \Sigma^2(G)$. 
\end{theorem}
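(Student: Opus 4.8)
The plan is to rephrase the conclusion in terms of the standard sublevel-set description of $\Sigma^2$ (the remark following its definition in Section \ref{prel}) and then to verify that description by a descent over the cells of $X$, each cell feeding in through the hypothesis on its stabilizer.

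First I would unpack what the hypotheses say. Writing $\Sigma^2(G_v)$ presupposes that a vertex stabilizer $G_v$ is of type $F_2$; the hypothesis $[\chi|_{G_e}]\in\Sigma^1(G_e)$ forces an edge stabilizer $G_e$ to be finitely generated; and for a $2$-cell $f$ the condition $[\chi|_{G_f}]\in\Sigma^0(G_f)$ is automatic as soon as $\chi|_{G_f}\neq 0$, since $\Sigma^0$ of any group is its whole character sphere. Uniformly: for each cell $\sigma$ the hypothesis says $G_\sigma$ has type $F_{2-\dim(\sigma)}$ and that, after fixing a model $E_\sigma$ of the universal cover of a $K(G_\sigma,1)$ which is $G_\sigma$-cocompact through dimension $2-\dim(\sigma)$ together with a $\chi|_{G_\sigma}$-equivariant height function on it, the superlevel sets of $E_\sigma$ are \emph{essentially $(1-\dim(\sigma))$-connected}: contraction of loops (for $\dim(\sigma)=0$), joining of points in a common component (for $\dim(\sigma)=1$), or mere nonemptiness (for $\dim(\sigma)=2$) becomes available after lowering the height by a bounded amount.

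Next I would glue the local models into a global one by forming the homotopy colimit $\widehat X=\operatorname{hocolim}_{\sigma\in X/G} E_\sigma$ over the poset of cells of $X$ (a Borel-type construction). This $\widehat X$ carries a free $G$-action, is $G$-cocompact through dimension $2$, and --- because $X$ is simply connected, which is all of the hypothesis ``$l$-connected'' that we use --- is itself simply connected; in particular $G$ is of type $F_2$. By the standard independence-of-model property of the homotopical $\Sigma$-invariants, $[\chi]\in\Sigma^2(G)$ can be tested on $\widehat X^{(2)}$: one must produce a $\chi$-equivariant map $h\colon\widehat X^{(2)}\to\R$ restricting to the chosen height functions on the pieces such that $\widehat X^{(2)}_{h\geq 0}$ is connected and $\pi_1(\widehat X^{(2)}_{h\geq 0})\to\pi_1(\widehat X^{(2)}_{h\geq d})$ has trivial image for some $d<0$.

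The remaining and main step is a filtered van Kampen / Mayer--Vietoris argument for the cover of each $\widehat X^{(2)}_{h\geq t}$ by the $G$-translates of the truncated pieces $(E_\sigma)_{\geq t}$. Given a loop in $\widehat X^{(2)}_{h\geq 0}$, subdivide it along this cover; inside each vertex-piece it contracts after a bounded height drop, inside each edge-piece its endpoints lying in a common component can be joined after a bounded drop, and each face-piece met is nonempty after a bounded drop. Since $G$ acts with only finitely many cell-orbits and $h$ is $\chi$-equivariant, all these bounds can be chosen uniformly, so the whole loop is killed once we pass to $\widehat X^{(2)}_{h\geq d}$ for a single $d<0$; the same bookkeeping yields connectivity of $\widehat X^{(2)}_{h\geq 0}$. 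Hence $[\chi]\in\Sigma^2(G)$, as desired. I expect the genuine obstacle to be precisely this last step: threading the van Kampen argument through an infinite (but locally finite and $G$-finite) cover while keeping the height drop uniform, and in particular checking that when a vertex-piece is glued along the edge-pieces incident to it the induced maps on fundamental groups fit together so that the local contractions globalize. By contrast, the reduction to type $F_2$, the homotopy-colimit construction, and the independence-of-model statement are all routine in $\Sigma$-theory.
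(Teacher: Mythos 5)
The paper does not prove this theorem --- it is quoted verbatim from Meinert \cite{M}, in the same way Theorem~\ref{SS} is quoted from \cite{S1} --- so there is no in-paper argument to compare your attempt against.

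Taken on its own terms, your outline is a reasonable reconstruction of the geometric strategy one expects here: replace the $G$-finite $2$-complex $X$ by a free $G$-CW complex with cocompact $2$-skeleton via a complex-of-classifying-spaces (Borel-type) construction, equip it with a $\chi$-equivariant height, and run a filtered van~Kampen/Morse argument. Two remarks. First, a minor inaccuracy: as written, $\operatorname{hocolim}_{\sigma\in X/G} E_\sigma$ does not naturally carry a $G$-action, let alone a free one; the construction should be taken over the $G$-poset of cells of $X$ itself (equivalently, each orbit piece should be the induced $G$-space $G\times_{G_\sigma}E_\sigma$ times the cell), so that the map to $X$ has contractible point-preimages and the $G$-action is free. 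Second, and more substantively, the filtered van~Kampen step --- which you yourself flag as the genuine obstacle --- is exactly where the whole content of the theorem lives, and it is left unfilled. It is not enough to note that height drops can be made uniform orbit-by-orbit: one must show that a loop in $\widehat X^{(2)}_{h\geq 0}$, which may traverse arbitrarily many translates of the various pieces, can be nullhomotoped after a single bounded drop, with the local contractions inside vertex pieces, the joining of components inside edge pieces, and the nonemptiness of face pieces all compatibly glued along their overlaps. Without carrying out that bookkeeping --- which is where Meinert's Morse-theoretic filtration does its work --- the proposal remains a plausible sketch of the right approach rather than a proof.
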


Since in general for $ n \geq 2$ we have  $\Sigma^n(G) = \Sigma^n(G, \mathbb{Z}) \cap \Sigma^2(G)$ and $\Sigma^1(G) = \Sigma^1(G, \mathbb{Z})$,  the above two theorems imply

\begin{corollary} \label{SSS}
Suppose that $G$ acts on a tree $T$ such that $T$ is
finite modulo the action of $G$. Suppose further that the restriction $\chi_{\sigma} : G_{\sigma} \to \mathbb{R}$
of $\chi$  to the stabilizer $G_{\sigma}$ of a vertex or an edge $\sigma$ of $T$ is non-zero.

 If $n \geq 1$, if $[\chi_v] \in \Sigma^n
(G_v )$
 for all vertices $v$ of $T$, and if  $[\chi_e] \in \Sigma^{n-1}(G_e)$
  for all edges $e$ of $T$,  then $[\chi] \in \Sigma^n
(G).$
\end{corollary}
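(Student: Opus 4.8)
The plan is to combine the two preceding theorems. For $n = 1$ the statement is exactly Theorem \ref{SS}(i) with $n=1$, using that $\Sigma^1(G) = \Sigma^1(G,\Z)$ and likewise for every vertex and edge group, so there is nothing new; I would dispose of this case in one line. For $n = 2$, I would invoke the homotopical Bieri--Renz-type theorem of Meinert (the ``\cite{M}'' theorem stated just above) applied to the tree $T$ viewed as a $G$-finite $2$-complex with no $2$-cells: since $T$ is a tree it is $1$-connected (indeed contractible), and the hypotheses $[\chi_v]\in\Sigma^2(G_v)$, $[\chi_e]\in\Sigma^1(G_e)=\Sigma^1(G_e,\Z)$, together with $\chi|_{G_\sigma}\neq 0$ for all cells $\sigma$, are precisely what that theorem requires (with $\dim v = 0$, $\dim e = 1$), yielding $[\chi]\in\Sigma^2(G)$. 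For the remaining case $n \geq 3$, I would use the ``factorization'' identity $\Sigma^n(G) = \Sigma^n(G,\Z)\cap\Sigma^2(G)$ quoted in the excerpt. Since $\Sigma^n(G_v)\subseteq \Sigma^n(G_v,\Z)$ and $\Sigma^{n-1}(G_e)\subseteq\Sigma^{n-1}(G_e,\Z)$, Theorem \ref{SS}(i) gives $[\chi]\in\Sigma^n(G,\Z)$; and since (for $n\geq 3$) $\Sigma^n(G_v)\subseteq\Sigma^2(G_v)$ and $\Sigma^{n-1}(G_e)\subseteq\Sigma^1(G_e)$, the already-established $n=2$ case gives $[\chi]\in\Sigma^2(G)$. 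Intersecting, $[\chi]\in\Sigma^n(G)$.

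A small point to check in the last step is that the hypotheses of the $n=2$ case are genuinely implied by those of the $n\geq 3$ case: we need $[\chi_v]\in\Sigma^2(G_v)$ for all vertices, which follows from $[\chi_v]\in\Sigma^n(G_v)$ because the homotopical invariants are nested, $\Sigma^n(G_v)\subseteq\Sigma^{n-1}(G_v)\subseteq\cdots\subseteq\Sigma^2(G_v)$ (valid since $n\geq 3\geq 2$); and we need $[\chi_e]\in\Sigma^1(G_e)$, which follows from $[\chi_e]\in\Sigma^{n-1}(G_e)$ since $n-1\geq 2\geq 1$. Implicit in all of this is that the finiteness hypotheses needed to even speak of these invariants are in force: $G$ is of type $F_n$, each $G_v$ of type $F_n$ and each $G_e$ of type $F_{n-1}$; the first follows from Theorem \ref{SS}(i) applied to $\Z$ (or one simply assumes the invariants are taken to be empty otherwise, consistently with the conventions in the excerpt), and the latter two are forced by the assumption that the listed $\Sigma$-sets are nonempty at the relevant vertices and edges. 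The nonvanishing hypothesis $\chi|_{G_\sigma}\neq 0$ is needed only to feed Meinert's theorem in the $n=2$ step; Theorem \ref{SS}(i) itself does not require it.

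The main obstacle, such as it is, is conceptual rather than technical: one must recognize that the homological statement Theorem \ref{SS}(i) does not by itself upgrade to the homotopical conclusion, because the homotopical invariant $\Sigma^n$ is strictly finer than $\Sigma^n(-,\Z)$ in general, and that the missing ingredient — control of the fundamental group, i.e. membership in $\Sigma^2$ — is supplied exactly once, at level $2$, by Meinert's $2$-complex theorem, and then propagates to all higher $n$ via the factorization $\Sigma^n=\Sigma^n(-,\Z)\cap\Sigma^2$. Everything else is bookkeeping with the standard nesting $\Sigma^{n}\subseteq\Sigma^{n-1}$ of the invariants and the elementary set-theoretic observation that an intersection lands in each factor. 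I would present the argument in three short cases ($n=1$, $n=2$, $n\geq 3$) in that order, since the $n\geq 3$ case explicitly cites the $n=2$ case.
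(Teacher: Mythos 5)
Your proposal is correct and follows essentially the same route as the paper, which (tersely) combines Theorem~\ref{SS}(i) for the homological part with Meinert's $2$-complex theorem for the $\Sigma^2$ part via the factorization $\Sigma^n(G) = \Sigma^n(G,\Z)\cap\Sigma^2(G)$ for $n\ge 2$ and $\Sigma^1(G)=\Sigma^1(G,\Z)$. Your case split ($n=1$, $n=2$, $n\ge 3$) just spells out the same argument with the nesting of the invariants made explicit.
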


In the above corollary if $n = 1$, $\Sigma^0(G_e)$ denotes $S(G_e)$. 

The homotopical part of the following result was proved by Kuckuck in \cite{K} with geometric methods. It   was used  in the proof of special cases of the $n-(n+1)-(n+2)$-Conjecture. In \cite{K-L} the first author and Lima proved a homological version of Kuckuck's result, using only algebraic methods including spectral sequences.

\begin{proposition} \cite{K}, \cite{K-L} \label{Kuck}  Let $n \geq 1$ be a natural number, $A \hookrightarrow B \twoheadrightarrow C$ a short exact sequence of groups with  $A$ of type $F_n$ ( resp. of type $FP_n$) and $C$ of type $F_{n+1}$ (resp. of type $FP_{n+1}$). Assume there is another short exact sequence of groups $A \hookrightarrow B_0 \twoheadrightarrow C_0$ with $B_0$ of type $F_{n+1}$ (resp. of type $FP_{n+1}$) and that there is a group homomorphism $\theta: B_0 \rightarrow B$ such that $\theta|_A = id_A$,  i.e. there is a commutative diagram of homomorphisms of groups  $$\xymatrix{A \ \ar@{^{(}->}[r] \ar[d]_{id_A} & B_0 \ar@{->>}[r]^{\pi_0} \ar[d]_{\theta} & C_0 \ar@{.>}[d]^{\nu} \\ A \ \ar@{^{(}->}[r] & B \ar@{->>}[r]^{\pi} & C}$$  Then $B$ is of type $F_{n+1}$ ( resp. of type $FP_{n+1}$).
\end{proposition}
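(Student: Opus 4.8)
The plan is to prove the homological statement (type $FP$); the homotopical statement is Kuckuck's, and I would either quote his geometric argument or deduce it by combining the conclusion ``$B$ is $FP_{n+1}$'' with finite presentability of $B$ (which for $n\ge 2$ is immediate, since $A$ and $C$ are then finitely presented, and for $n=1$ is supplied by the homotopical half of Kuckuck's theorem). So assume $A$ is $FP_n$, that $C$ and $B_0$ are $FP_{n+1}$, and that $\theta\colon B_0\to B$ restricts to $\mathrm{id}_A$; we must show $B$ is $FP_{n+1}$. Two preliminary remarks: (i) since $A$ is $FP_n$ and $C$ is $FP_n$, the extension $A\hookrightarrow B\twoheadrightarrow C$ already gives that $B$ is $FP_n$, so only the single extra finiteness step is at issue; and (ii) since $A$ is $FP_n$ and $B_0$ is $FP_{n+1}$, the extension $A\hookrightarrow B_0\twoheadrightarrow C_0$ gives that $C_0:=B_0/A$ is $FP_{n+1}$, by the ``kernel and middle term control the quotient'' part of the hereditary behaviour of type $FP_m$ in group extensions. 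Note also that $\theta$ induces $\nu\colon C_0\to C$, completing the commutative ladder.

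The next step is the standard homological criterion: a group $G$ of type $FP_n$ is of type $FP_{n+1}$ if and only if $H_n\!\big(G;\prod_{\Lambda}\mathbb{Z}G\big)=0$ for every index set $\Lambda$ (equivalently, a suitable syzygy module of $\mathbb{Z}$ over $\mathbb{Z}G$, finitely generated since $G$ is $FP_n$, is in fact finitely presented). So I would fix an index set $\Lambda$, put $M=\prod_\Lambda\mathbb{Z}B$, and analyse the Lyndon--Hochschild--Serre spectral sequence $E^2_{pq}=H_p\!\big(C;H_q(A;M)\big)\Rightarrow H_{p+q}(B;M)$. Since $\mathbb{Z}B$ is free as a $\mathbb{Z}A$-module and $A$ is $FP_n$, the rows $1\le q\le n-1$ vanish and $H_0(A;M)=\prod_\Lambda\mathbb{Z}C$; since $C$ is $FP_{n+1}$, the criterion applied to $C$ kills the corner term $E^2_{n,0}=H_n(C;\prod_\Lambda\mathbb{Z}C)$. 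Chasing the antidiagonal $p+q=n$ then shows that $H_n(B;M)$ equals its one surviving graded piece $E^\infty_{0,n}$, and this in turn equals the cokernel of the single potentially nonzero incoming differential $d^{n+1}\colon H_{n+1}(C;\prod_\Lambda\mathbb{Z}C)\to H_0\!\big(C;H_n(A;M)\big)$. Thus it suffices to prove that this $d^{n+1}$ is surjective for every $\Lambda$.

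This is where $B_0$ enters. I would run the parallel spectral sequence for $A\hookrightarrow B_0\twoheadrightarrow C_0$ with coefficients in the pulled-back module $\theta^{\#}M$. Because $\theta|_A=\mathrm{id}_A$, the groups $H_q(A;\theta^{\#}M)$ coincide with $H_q(A;M)$ for all $q$, so the map of spectral sequences induced by $\theta$ acts on the $(0,n)$-entry as the comparison map $H_0(C_0;H_n(A;M))\to H_0(C;H_n(A;M))$ of coinvariants induced by $\nu$, which is surjective; combined with the analogous identification on the bottom row, this produces a commuting square relating the two copies of $d^{n+1}$. Using that $B_0$ is $FP_{n+1}$, the $B_0$-side of the square would give that the relevant graded piece vanishes, and a chase then forces the original $d^{n+1}$ to be onto, completing the proof. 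I expect the main obstacle to be exactly the coefficient bookkeeping in this last step: over $\mathbb{Z}B_0$ the module $\theta^{\#}M=\prod_\Lambda\theta^{\#}\mathbb{Z}B$ is \emph{not} a product of free $\mathbb{Z}B_0$-modules — it is a product of modules induced from the subgroup $\ker\theta$ — so one cannot directly invoke the $FP_{n+1}$-criterion for $B_0$ with these coefficients. The argument must isolate the precise subquotient of $H_n(B_0;\theta^{\#}M)$ that surjects onto $\operatorname{coker}d^{n+1}$ (a subquotient that depends only on the $\mathbb{Z}A$-module underlying $M$ and on the $C_0$-action through $\nu$) and show it vanishes; doing this cleanly — for instance by recasting everything as a statement that $\operatorname{Tor}$ commutes with products for the relevant syzygy module and transporting that statement along the ladder, or by building partial free resolutions of $\mathbb{Z}$ over $\mathbb{Z}B$ and $\mathbb{Z}B_0$ simultaneously from the two extensions — is the technical heart of the proof.
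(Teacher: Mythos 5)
The paper does not actually prove this Proposition; it is stated with citations to Kuckuck \cite{K} for the homotopical case (proved geometrically) and to Kochloukova--Lima \cite{K-L} for the homological case (proved via spectral sequences and the Bieri criterion), so your spectral-sequence route is being compared against \cite{K-L} rather than against anything in this paper. Your setup is sound and matches the spirit of \cite{K-L}: invoking the Bieri product criterion (a group $G$ of type $FP_n$ is $FP_{n+1}$ iff $H_n(G;\prod_\Lambda\mathbb{Z}G)=0$ for every $\Lambda$), running the Lyndon--Hochschild--Serre spectral sequence for $A\hookrightarrow B\twoheadrightarrow C$ with coefficients $M=\prod_\Lambda\mathbb{Z}B$, using $A$ of type $FP_n$ to kill the rows $1\le q\le n-1$, using $C$ of type $FP_{n+1}$ to kill $E^2_{n,0}$, and thereby reducing the problem to surjectivity of $d_{n+1}\colon H_{n+1}(C;\prod_\Lambda\mathbb{Z}C)\to H_0(C;H_n(A;M))$. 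The preliminary deduction that $C_0=B_0/A$ is $FP_{n+1}$ from $A$ being $FP_n$ and $B_0$ being $FP_{n+1}$ is also correct.

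However the argument has a genuine gap at its crux, which, to your credit, you flag yourself. Pulling back coefficients to $\theta^\#M$ makes the fiber entries $H_q(A;\theta^\#M)$ agree with $H_q(A;M)$, and the induced map on $E^2_{0,n}$-entries $H_0(C_0;H_n(A;M))\to H_0(C;H_n(A;M))$ is indeed a surjection of coinvariants — all correct. But as you yourself observe, $\theta^\#M$ is not a product of free $\mathbb{Z}B_0$-modules, so the Bieri criterion for $B_0$ of type $FP_{n+1}$ does not give $H_n(B_0;\theta^\#M)=0$; in fact a Shapiro computation shows $H_n(B_0;\theta^\#\mathbb{Z}B)$ is built from $H_n(\ker\theta;\mathbb{Z})$ and can be nonzero. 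What you actually need is only the vanishing of the filtration piece $E^\infty_{0,n}$ on the $B_0$-side, equivalently surjectivity of the corresponding $d_{n+1}$; the module for which you can cite the criterion is $\prod_\Lambda\mathbb{Z}B_0$, not $\theta^\#M$, but then the comparison on $E^2_{0,n}$-entries is induced by the $\mathbb{Z}A$-linear map $\bigoplus_{C_0}\mathbb{Z}A\to\bigoplus_C\mathbb{Z}A$, which fails to be surjective when $\nu$ is not onto (and the Proposition explicitly permits $\theta$, hence $\nu$, to be non-surjective), so that route also needs real work. Your closing suggestions — recasting things as a $\mathrm{Tor}$-commutes-with-products statement for a syzygy, or building compatible partial free resolutions over $\mathbb{Z}B$ and $\mathbb{Z}B_0$ — describe the shape of a fix without supplying one. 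This last step is precisely the nontrivial content of \cite{K-L}, and your write-up leaves it open: what you have is a correct blueprint with the central lemma still to be proved.
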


It is interesting to note that in the above proposition $\theta$ is not required to be surjective but we will apply it in the proof of our main results in the case when $\theta$ is surjective.

\section{Proofs} \label{proofs-thm}

The proof of \cite[Theorem 1]{F-V} uses significantly the following lemma.

\begin{lemma}  \cite[Lemma ~2.2]{F-V} \label{l-2.2}
Let $\Pi = \Pi_1 *_K \Pi_2$ be a free product with amalgamation of two finitely
generated groups along a finitely generated subgroup $K$. Assume that  $\Pi_2 = K \rtimes \mathbb{Z} = \langle K, s \ | \ sks^{ -1}  = f (k) \rangle$ for some automorphism $f : K \to K$.
Let $[\chi] \in S(\Pi)$ be a character whose restrictions satisfy the conditions $[\chi_1] \in \Sigma^1(\Pi_1)$, $\chi(K) \not= 0$
 and $\chi_2(s) = 0$, where $\chi_1 = \chi |_{\Pi_1}$, $\chi_2 = \chi |_{\Pi_2}$. Then $[\chi] \in \Sigma^1(\Pi)$. 
\end{lemma}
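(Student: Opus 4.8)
The plan is to realize $\Pi$ as the fundamental group of a graph of groups and apply the topological interpretation of $\Sigma^1$ via covering spaces, essentially following the ``Mayer--Vietoris for $\Sigma$-invariants'' philosophy but done by hand since we only need $\Sigma^1$ and the Howie/Sheehan-style connectivity of kernels. Concretely, write $\Pi = \Pi_1 *_K \Pi_2$ and build a $K(\Pi,1)$ as a double mapping cylinder: take $K(\Pi_1,1)$, $K(\Pi_2,1)$, and $K(K,1)$, glued along the two inclusion-induced maps $K(K,1) \to K(\Pi_i,1)$. Since $\chi(\Pi) \neq 0$ (as $\chi(K) \neq 0$) the character $\chi$ is nontrivial, and $[\chi] \in \Sigma^1(\Pi)$ is equivalent to the Cayley graph half-space $\Gamma_\chi$ being connected, which in turn (by the covering-space dictionary) amounts to showing that the infinite cyclic cover (or $\chi^{-1}[0,\infty)$-cover) of this $K(\Pi,1)$ corresponding to $\chi$ is connected ``at infinity in the positive direction'', i.e. $H_0$ of the appropriate subcomplex is generated over the monoid $\Pi_\chi$.

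The key steps, in order, are: First, reduce to a combinatorial statement about the Bass--Serre tree $T$ of the splitting $\Pi = \Pi_1 *_K \Pi_2$: $\Pi$ acts on $T$ with vertex stabilizers conjugates of $\Pi_1, \Pi_2$ and edge stabilizers conjugates of $K$; since $\chi(K) \neq 0$ every edge and vertex stabilizer restriction of $\chi$ is nontrivial. Second, observe that for the vertex group $\Pi_1$ we are given $[\chi_1] \in \Sigma^1(\Pi_1)$, and for the edge group $K$ we have $\chi(K) \neq 0$ so trivially $[\chi_K] \in \Sigma^0(K) = S(K)$. Third, and this is the crux, handle the vertex group $\Pi_2 = K \rtimes \mathbb{Z} = \langle K, s \mid sks^{-1} = f(k)\rangle$ where $\chi_2(s) = 0$: here $[\chi_2]$ need \emph{not} lie in $\Sigma^1(\Pi_2)$ — indeed $\chi_2$ vanishes on $s$ and restricts to $\chi_K$ on $K$, and the half-space can fail to be connected — so one cannot simply invoke Corollary \ref{SSS}. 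Instead I would use the ascending-HNN / mapping-torus structure of $\Pi_2$ over $K$: the Cayley-graph half-space of $\Pi_2$ for $\chi_2$ is obtained from copies of the half-space for $\chi_K$ in $K$ (which is all of the Cayley graph of $K$, connected, since $\chi_K \neq 0$ only matters at the level of $\Sigma^0$) stacked along the $s$-direction, and since $\chi_2(s)=0$ these copies are glued equivariantly; connectivity of $\Gamma_1(\Pi_2)_{\chi_2}$ then reduces to connectivity of $\Gamma_1(K)$, which holds. The point is that $\Pi_2$'s half-space is ``vertically homogeneous'' in the $s$ direction, so it inherits connectivity from $K$.

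The cleanest way to assemble these is to use the tree $T$ directly together with the known behavior at $\Pi_1$ and the homogeneity at $\Pi_2$: pick two vertices $g \Pi_1 x$ and $h \Pi_1 y$ in the positive half-space $\Pi_\chi \backslash T$-picture, connect their projections in $T$ by an edge path, and lift the connecting argument using (i) connectivity inside $\Pi_1$-cosets from $[\chi_1] \in \Sigma^1(\Pi_1)$, (ii) the ability to move within a $\Pi_2$-coset staying in the half-space because $\chi_2(s) = 0$ lets us freely translate by $s^{\pm 1}$ (which is the key flexibility not available for general vertex groups), and (iii) connectivity within $K$-cosets (edge groups) since $\chi_K \neq 0$. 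The main obstacle, and the reason this is stated as a separate lemma rather than following from Corollary \ref{SSS}, is precisely step (iii)/the $\Pi_2$ analysis: because $[\chi_2] \notin \Sigma^1(\Pi_2)$ in general, the hypotheses of the van der Stegen/Strebel tree criterion fail at the vertex $\Pi_2$, and one must exploit the very specific semidirect-product structure $\Pi_2 = K \rtimes \mathbb{Z}$ with $\chi$ killing the $\mathbb{Z}$ to push connectivity through that vertex ``for free''; the bookkeeping of which translates $s^n k$ land in $\Pi_\chi$ and how to route a path from a bad vertex back into the half-space using $s$-moves is where the real work lies.
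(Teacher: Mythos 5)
Your proposal correctly identifies the central obstruction — the vertex group $\Pi_2 = K \rtimes \mathbb{Z}$ has $[\chi_2] \notin \Sigma^1(\Pi_2)$ in general, so one cannot feed the amalgam's Bass--Serre tree naively into the Schmitt/Meinert tree criterion — but the fix you propose does not work, and the paper's route is genuinely different.

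Your claim that the half-space $(\Pi_2)_{\chi_2}$ is connected because of the $s$-translation ``reduces to connectivity of $\Gamma_1(K)$'' is where the argument breaks. Since $\chi_2(s)=0$ and $f$ preserves $\chi_K$, one has $(\Pi_2)_{\chi_2} = \bigsqcup_{n\in\mathbb{Z}} s^n K_{\chi_K}$, and connecting two points in the same slab $s^n K$ while staying in the half-space requires connectivity of $K_{\chi_K}$, i.e.\ $[\chi_K] \in \Sigma^1(K)$ — which is not a hypothesis and typically fails (take $K = F_2$, $\chi_K(a)=1$, $\chi_K(b)=-1$, $f = \mathrm{id}$: then $\Pi_2 = F_2 \times \mathbb{Z}$, $\ker\chi_2 = N \times \mathbb{Z}$ with $N$ infinite-rank free, so $[\chi_2]\notin\Sigma^1(\Pi_2)$). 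Passing to a neighboring slab $s^{n\pm1}K$ by right-multiplication by $s$ preserves $\chi$-values but lands you back in the same connectivity problem inside a translate of $K_{\chi_K}$, so the ``vertical homogeneity'' does not rescue horizontal connectivity. Your sentence ``since $\chi_K\neq 0$ only matters at the level of $\Sigma^0$'' conflates the edge-group requirement (indeed only $\Sigma^0$, trivially satisfied) with the vertex-group requirement at $\Pi_2$ (genuinely $\Sigma^1$, not satisfied).

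The paper avoids this entirely by \emph{not} using the amalgam decomposition. It observes that
\[
\Pi = \Pi_1 *_K \Pi_2 = \langle \Pi_1, s \mid s k s^{-1} = f(k),\ k\in K\rangle,
\]
i.e.\ $\Pi$ is an HNN extension with base group $\Pi_1$, stable letter $s$, and both associated subgroups equal to $K$. The Bass--Serre tree of \emph{this} decomposition has vertex stabilizers conjugate to $\Pi_1$ only and edge stabilizers conjugate to $K$; the troublesome vertex group $\Pi_2$ simply does not occur. Then $[\chi_1]\in\Sigma^1(\Pi_1)$ at every vertex and $\chi_K\neq 0$ (the $\Sigma^0$ condition) at every edge let one apply the tree criterion (Corollary~\ref{SSS} with $n=1$, or Theorem~\ref{SS}(i) for the homological version) directly. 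This is exactly what Lemma~\ref{gen-lemma} generalizes to higher $n$. If you want to salvage the amalgam picture, you would have to allow connecting paths that leave a $\Pi_2$-coset and re-enter — precisely the routing that the HNN rewriting makes unnecessary.
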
 

The following result generalizes Lemma \ref{l-2.2}.

\begin{lemma} \label{gen-lemma} 
Let $\Pi = \Pi_1 *_K \Pi_2$ and $\Pi_2 = K \rtimes \mathbb{Z} = \langle K, s \ | \ sks^{-1} = f(k) \hbox{ for } k \in K \rangle$ for some $f \in Aut(K)$. Let $\chi : \Pi \to \mathbb{R}$ be a character such that $\chi |_{\Pi_1} \not= 0$ and $\chi |_K \not= 0$.

a) Suppose that $\Pi_1, \Pi_2$ are of type $F_n$ and $K$ is of type $F_{n-1}$,  $[\chi |_{\Pi_1}] \in \Sigma^n(\Pi_1)$, $[\chi |_K] \in \Sigma^{ n-1} (K)$. Then $[\chi] \in \Sigma^n(\Pi)$.

b) Suppose that $\Pi_1, \Pi_2$ are of type  $FP_n$ and $K$ is of type $FP_{n-1}$,  $[\chi |_{\Pi_1}] \in \Sigma^n(\Pi_1, \mathbb{Z})$, $[\chi |_K] \in \Sigma^{ n-1} (K, \mathbb{Z})$. Then $[\chi] \in \Sigma^n(\Pi, \mathbb{Z})$.
\end{lemma}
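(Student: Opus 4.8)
The plan is to realize $\Pi = \Pi_1 *_K \Pi_2$ as a group acting on a tree and to apply the graph-of-groups machinery (Theorem~\ref{SS} for part b, Corollary~\ref{SSS} for part a), but the Bass--Serre tree of the amalgamated product alone is not quite enough: its edge group is $K$ and its vertex groups are $\Pi_1$ and $\Pi_2$, and while we are given $[\chi|_{\Pi_1}] \in \Sigma^n(\Pi_1)$ and $[\chi|_K] \in \Sigma^{n-1}(K)$, we have \emph{no} hypothesis placing $[\chi|_{\Pi_2}]$ in $\Sigma^n(\Pi_2)$. The key observation, exactly as in the proof of Lemma~\ref{l-2.2}, is that $\Pi_2 = K \rtimes \mathbb{Z} = \langle K, s \mid sks^{-1} = f(k)\rangle$ is itself an HNN extension (ascending, in fact an extension by $\mathbb Z$) with vertex group $K$, and the hypothesis $\chi_2(s) = 0$ — wait, that hypothesis is not stated here; rather we are only told $\chi|_K \neq 0$. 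So first I would analyze $[\chi|_{\Pi_2}]$ using the tree $T_2$ for $\Pi_2 = K*_K$ (HNN with associated subgroups both equal to $K$ via $\mathrm{id}$ and $f$): the edge stabilizers are conjugates of $K$, so $[\chi|_K] \in \Sigma^{n-1}(K)$ gives that all edge restrictions lie in $\Sigma^{n-1}$, and the vertex stabilizer is again $K$. Applying Theorem~\ref{SS}(i) (resp. its homotopical analogue via Corollary~\ref{SSS}) to $T_2$ I would get $[\chi|_{\Pi_2}] \in \Sigma^n(\Pi_2, \mathbb Z)$ (resp. $\Sigma^n(\Pi_2)$) — here one must check that $\chi|_K \neq 0$ suffices even when $\chi(s)$ may be nonzero, which is fine because the criterion in Theorem~\ref{SS}(i) only asks for the edge and vertex restrictions to be nonzero and in the appropriate invariant, and all of those are restrictions to conjugates of $K$.

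Next, with $[\chi|_{\Pi_1}] \in \Sigma^n(\Pi_1)$ (given), $[\chi|_{\Pi_2}] \in \Sigma^n(\Pi_2)$ (just derived), and $[\chi|_K] \in \Sigma^{n-1}(K)$ (given) — and all three restrictions nonzero, since $\chi|_{\Pi_1}\neq 0$, $\chi|_K \neq 0$, and $\chi|_K \neq 0$ forces $\chi|_{\Pi_2}\neq 0$ — I would apply Theorem~\ref{SS}(i) (for part b) or Corollary~\ref{SSS} (for part a) to the Bass--Serre tree $T$ of the amalgamated product $\Pi = \Pi_1 *_K \Pi_2$. That tree is $\Pi$-finite with two orbits of vertices (stabilizers conjugate to $\Pi_1$, $\Pi_2$) and one orbit of edges (stabilizers conjugate to $K$). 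The hypotheses of the cited theorem are then exactly met: vertex restrictions in $\Sigma^n$, edge restrictions in $\Sigma^{n-1}$, all nonzero. The conclusion is $[\chi] \in \Sigma^n(\Pi, \mathbb Z)$ (resp. $\Sigma^n(\Pi)$), which is what we want.

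For the homotopical statement (part a) there is the usual subtlety that Corollary~\ref{SSS} combines the $FP_n$ statement of Seifert--van Kampen/Martinez-Perez type with the $\Sigma^2$ statement, using $\Sigma^n(G) = \Sigma^n(G,\mathbb Z) \cap \Sigma^2(G)$ for $n \geq 2$; so I would run the argument once with $\mathbb Z$-coefficients to get the homological conclusion, and once at the level $n = 2$ using the homotopical tree theorem of Meinert (stated in the excerpt) to get $[\chi] \in \Sigma^2$, then intersect. One also needs the finiteness hypotheses ($\Pi_1,\Pi_2$ of type $F_n$, $K$ of type $F_{n-1}$) to legitimately speak of these invariants and to have the trees be cocompact with the right finiteness of stabilizers — these are all in place by assumption.

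The main obstacle I anticipate is the first step: cleanly verifying that $[\chi|_{\Pi_2}] \in \Sigma^n(\Pi_2)$ follows from $[\chi|_K] \in \Sigma^{n-1}(K)$ \emph{with no constraint on $\chi(s)$}. Lemma~\ref{l-2.2} assumed $\chi_2(s)=0$, which made the HNN edge-and-vertex restrictions literally equal to $\chi|_K$; here $\chi(s)$ is unconstrained, so I must make sure the HNN-extension case of Theorem~\ref{SS}(i) still applies — it does, since for an ascending HNN extension all edge stabilizers are conjugate to the single vertex group $K$ and the only requirement is $[\chi|_K] \in \Sigma^{n-1}(K)$ together with $[\chi|_K]\in\Sigma^n(K)$... which we do \emph{not} have. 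This is the real point to get right: if $\Sigma^n(K)$ is needed at the vertex, the hypothesis $K$ of type $F_{n-1}$ with $[\chi|_K]\in\Sigma^{n-1}(K)$ is too weak. The resolution must be that one does \emph{not} pass through $\Sigma^n(\Pi_2)$ at all, but instead builds a single tree — e.g. the tree for $\Pi$ acting so that the $\Pi_2$-vertices are themselves ``blown up'' into the HNN tree $T_2$, giving a tree for $\Pi$ with \emph{all} vertex stabilizers conjugate to $\Pi_1$ or $K$ and all edge stabilizers conjugate to $K$ — and then applies Theorem~\ref{SS}(i)/Corollary~\ref{SSS} once, needing only $[\chi|_{\Pi_1}]\in\Sigma^n(\Pi_1)$, $[\chi|_K]\in\Sigma^n(K)$ at the $K$-vertices... still $\Sigma^n(K)$. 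So the actual fix, and the subtle heart of the argument, is to exploit that the $K$-vertices in this refined tree are \emph{not} generic vertices but sit inside the $\mathbb Z$-extension $\Pi_2$ with the $s$-edges forming the ``ascending'' structure: one uses the specific combinatorics (as in \cite[Lemma~2.2]{F-V}) showing that the relevant connectivity of the $\chi$-positive part can be pushed up along the $s$-translates because $f$ is an automorphism, so that effectively $\Sigma^{n-1}(K)$ at the $K$-level is upgraded to the needed connectivity for $\Pi$. Making that combinatorial push precise — the analogue of the Mayer--Vietoris / van Kampen argument in the proof of Lemma~\ref{l-2.2}, now in dimension $n$ and with homological vs. homotopical bookkeeping — is where the work lies.
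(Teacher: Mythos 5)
Your analysis correctly identifies the real difficulty: the amalgam tree for $\Pi_1 *_K \Pi_2$ carries no useful hypothesis at the $\Pi_2$-vertices, and the naive attempt to ``blow up'' the $\Pi_2$-vertices into the HNN tree of $\Pi_2 = K*_{K,f}$ appears to leave you with $K$-stabilized vertices, for which Theorem~\ref{SS}(i) would require $[\chi|_K]\in\Sigma^n(K)$ rather than the given $[\chi|_K]\in\Sigma^{n-1}(K)$. However, your proposal stops short of the resolution: you speculate about a ``combinatorial push'' in the style of \cite[Lemma~2.2]{F-V}, which is not what is needed.

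The fix is much simpler and is the crux of the paper's proof: since $K\subseteq\Pi_1$ and $\Pi_2 = K\rtimes\langle s\rangle$, the amalgamated product collapses to an HNN extension of $\Pi_1$ alone,
\[
\Pi = \Pi_1 *_K \Pi_2 \;=\; \langle\, \Pi_1,\, s \mid sks^{-1}=f(k)\ \text{for}\ k\in K\,\rangle,
\]
with stable letter $s$, base group $\Pi_1$, and associated subgroups $K$ and $f(K)=K$. In the tree you described, the edge joining a $\Pi_1$-vertex to a $K$-vertex has edge group equal to the $K$-vertex stabilizer, so it is inessential and can be collapsed; what remains is precisely the Bass--Serre tree of this HNN extension, which has a \emph{single} orbit of vertices with stabilizer conjugate to $\Pi_1$ and a single orbit of edges with stabilizer conjugate to $K$. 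There are no $K$-vertices at all. Theorem~\ref{SS}(i) (for part b) and Corollary~\ref{SSS} (for part a) then apply immediately with exactly the given hypotheses $[\chi|_{\Pi_1}]\in\Sigma^n(\Pi_1)$ at the vertices and $[\chi|_K]\in\Sigma^{n-1}(K)$ at the edges, together with $\chi|_{\Pi_1}\neq0$, $\chi|_K\neq0$. Without this observation your argument has a genuine gap: as you acknowledge, you never obtain the $\Sigma^n$ input needed at the $K$-vertices in your blown-up tree, and the appeal to a dimension-$n$ analogue of the combinatorial argument in Lemma~\ref{l-2.2} is left entirely unjustified and, in fact, unnecessary.
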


\begin{proof} Note that $\Pi = \langle \Pi_1, s \ | \  sks^{-1} = f(k), k \in K \rangle$. Thus $\Pi$ is an HNN extension with stable  letter $s$, base group $\Pi_1$ and associated subgroups $K, f(K) = K$. Then $\Pi$ acts on  the associated Bass-Serre tree $T$  with vertex stabilizers conjugates of $\Pi_1$ and edge stabilizers conjugates of $K$. 
Then
a) is a particular case of Corollary \ref{SSS} and
b) is a particular case of  Theorem \ref{SS}.
\end{proof}

\begin{proof}[Proof of Theorem \ref{Main1}] In section \ref{prel} we defined a character to be non-trivial, thus $\phi \not= 0$.

We start considering the homotopical version first. 
We follow the recipe of  the proof of Theorem 1 from \cite{F-V} substituting $\Sigma^1(G)$ with $\Sigma^{n_0}(G)$ and substituting Lemma \ref{l-2.2} with Lemma \ref{gen-lemma} a). In the final stage of the proof we need to extend the argument since a quotient of a group of type $F_{n_0}$ is not necessary of type $F_{n_0}$.

For a finitely generated group $G$ we denote by $G^{ ab}$ the abelianization of $G$ and by $Tor$ the maximal finite subgroup of $G^{ab}$. For a subgroup $K$ of $G$ the map $K \to  G^{ ab}/ Tor$  is the composition of the inclusion $K \to G$ and the canonical projection $G \to G^{ab}/ Tor$.

As in the proof of \cite[Theorem 1]{F-V} there is a generating set $h_1, \ldots, h_m, g_1, \ldots, g_r$ of $\Gamma$ that is ``adjusted to ab'' i.e. $\langle \overline{h_1}, \ldots, \overline{h_m} \rangle \simeq \mathbb{Z}^m \simeq \Gamma^{ ab} / Tor$, where $\overline{h_1}, \ldots, \overline{h_m} $ are the images of $h_1, \ldots, h_m$ in $\Gamma^{ ab} / Tor$ and $g_1, \ldots, g_r \subset Ker ( \Gamma \to \Gamma^{ ab} / Tor)$. Set $n = m+ r$.
As in the proof of \cite[Thm. ~1]{F-V}  there is a commutative diagram where the lines are short exact sequences of groups
$$\xymatrix{K \ \ar@{^{(}->}[r] \ar[d]_{id_K} & \Pi \ar@{->>}[r]^{} \ar@{->>}[d]_{\pi} & F_n \ar@{->>}[d]^{} \\ K \ \ar@{^{(}->}[r] & G \ar@{->>}[r]^{} & \Gamma}$$
where $F_n$ is the free group with a free basis $s_1, \ldots, s_n$,  and the map $F_n \to \Gamma$ sends $s_1, \ldots, s_n$ to $h_1, \ldots, h_m, g_1, \ldots, g_r$ and the map $\pi |_K : K \to K$ is the identity. Here $\Pi = \Pi_1 *_K \Pi_2 *_K \ldots *_K \Pi_n$ and each $\Pi_i = K \rtimes \langle s_i \rangle$, $\langle s_i \rangle \simeq \mathbb{Z}$. 
Let $\widetilde{\alpha} : G \to \mathbb{R}$ be a discrete character (i.e. non-zero homomorphism with cyclic image) such that $K \subseteq Ker(\widetilde{\alpha})$. Let $\alpha =  \widetilde{\alpha} \circ \pi : \Pi \to \mathbb{R}$  and  $\alpha |_{\Pi_i} = \alpha_i$.  Without loss of generality, permuting if necessary the elements $h_1, \ldots, h_m$, we can assume that $\alpha_1 \not= 0$. Then $K = Ker (\alpha_1)$.

Note that in the statement of the result we assumed that there is a character of $G$ that extends $\phi$. This is equivalent to the fact that the image of $K$ in the abelianization $G/ G'$ is infinite. This implies the existence of a {\em discrete} character  $c : G \to \mathbb{R}$  whose restriction to $K$ is the character $\phi : K \to \mathbb{R}$. 
 
Define $\gamma = c \circ \pi : \Pi \to \mathbb{R}$. Set $\gamma_i = \gamma |_{\Pi_i}$ and  $\beta_i : = \alpha_i + \mu \gamma_i : \Pi_i \to \mathbb{R}$ for some rational number $\mu > 0$, thus $\beta_i$ is a discrete character. Note that $\beta_i |_K = \mu \gamma_i |_{K} = \mu  \phi$ and define the discrete character $\beta : \Pi \to \mathbb{R}$ given by $\beta |_{\Pi_i} = \beta_i$.

Since $K$ is  of type $F_{n_0}$ and $Ker(\alpha_1) = K$, then $[\alpha_1] \in \Sigma^{ n_0}(\Pi_1)$. By Theorem \ref{open} $\Sigma^{ n_0}(\Pi_1)$ is an open subset of $S(\Pi_1)$, hence for sufficiently small $\mu$ we have that $[\beta_1] \in \Sigma^{ n_0}(\Pi_1)$.

Recall that $\beta_i |_K  = \mu \phi$. Since $N$ is of type $F_{n_0-1}$ we have that $[\beta_i |_K] = [\phi]  \in \Sigma^{n_0-1}(K)$. Then by Lemma \ref{gen-lemma} applied $n-1$ times $[\beta] \in \Sigma^{n_0} (\Pi)$. The same argument shows that  $[- \beta] \in \Sigma^{n_0} (\Pi)$.
 Hence $Ker (\beta)$ is of type $F_{n_0}$ and there is a short exact sequence $1 \to Ker (\beta) \to \Pi \to \mathbb{Z} \to 1$.

Recall that $\beta = \alpha + \mu \gamma$ and that both $\alpha$ and $\gamma$ factor through $G$. Thus
$\beta$ induces a discrete character $b : G \to  \mathbb{R}$. Actually $b = \widetilde{\alpha} + \mu c$.  And there is an epimorphism $Ker(\beta) \to Ker(b)$, whose kernel is isomorphic to $T = Ker(\pi : \Pi \to G)$. Note that both $\Pi$ and $G$ are of type $F_{ n_0}$. In the original \cite[Theorem 1]{F-V} the proof finishes here, since a quotient of a finitely generated group is finitely generated. But since a quotient of a finitely presented group is not necessary finitely presented we need an additional argument here to deduce that $Ker(b)$ is of type $F_{n_0}$.

Now for the general case  i.e. $n_0 \geq 2$ we would apply Kuckuck's criterion i.e. the homotopical part of
 Proposition \ref{Kuck}.
Write $\overline{Ker (\beta)}$ for the image of $Ker (\beta)$ in $F_n$ and $\overline{Ker(b)}$ for the image of $Ker(b)$ in $\Gamma$. By construction $Ker(\beta) \cap K = N$ and $Ker(b) \cap K = N$. By assumption $N$ is of type $F_{n_0-1}$ and we have already proved that $Ker(\beta)$ is of type $F_{n_0}$.  By constuction $b(K) \not= 0$, hence $K. Ker (b) \not= Ker(b)$ and since $G/ Ker(b) \simeq \mathbb{Z}$ we deduce that $K. Ker(b)$ has finite index in $G$ and so $\overline{Ker(b)}$ has finite index in $\Gamma$. Since in the short exact sequence $1 \to K \to G \to \Gamma \to 1$ we have that $G$ and $K$ are of type $F_{n_0}$ ( here we need only that $K$ is $F_{n_0-1}$) we deduce that $\Gamma$ is of type $F_{n_0}$. Then $\overline{Ker(b)}$ is of type $F_{n_0}$. Then we can apply the Kuckuck's criterion i.e. the homotopical version of Proposition \ref{Kuck} for the commutative diagram 

$$\xymatrix{N = Ker(\beta) \cap K \ \ar@{^{(}->}[r] \ar[d]_{id_N} &  Ker (\beta) \ar@{->>}[r]^{} \ar@{->>}[d]_{\pi |_N} &  \overline{Ker(\beta)} \ar@{->>}[d]^{} \\ N = Ker(b) \cap K  \ \ar@{^{(}->}[r] & Ker (b) \ar@{->>}[r]^{} & \overline{Ker(b)}}$$
 to deduce that $Ker(b)$ is of type $F_{n_0}$. Recall that $b = \widetilde{\alpha} + \mu c$, hence $b |_K = \mu c|_K = \mu \phi$. Then we can set $\psi = \mu^{-1} b$, thus $\psi |_K  = \phi$.

Since the proof of Proposition \ref{Kuck} is complicated we  include an elementary proof of the final part of the above argument in the special case when $n_0 = 2$. We are not aware of simple arguments for $n_0 > 2$.
Let $T = Ker (\pi)$ and let $t  \in T$. Note that the conjugation action of $t$ on $K$ in $\Pi$ gives an automorphism of $K$ that coincides with the conjugation action of $\pi(t) = 1$ on $K$ in $G$ i.e. $[T, K] = 1$ in $\Pi$.
Since $G$ is finitely presented  there is a finite set $X = \{t_1, \ldots, t_s \} \subseteq  T$ that generates $T$ as a normal subgroup of $\Pi$.

By construction  there is a short exact sequence $ 1 \to T = \langle X \rangle^{\Pi} \to Ker (\beta) \to Ker (b) \to 1$. As well by construction $\beta(K) \not= 0$ and since $\Pi / Ker (\beta) \simeq \mathbb{Z}$ we deduce that $K \subsetneq K. Ker (\beta)$ and $ K. Ker (\beta)$   has finite index in $\Pi$. Let $a_1, \ldots, a_i$ be a transversal of $K. Ker(\beta)$ in $\Pi$. Since $K$ is normal in $\Pi$ we have that $\Pi = \cup_{1 \leq j \leq i}  a_j K. Ker (\beta) = \cup_{1 \leq j \leq i}  K a_j Ker (\beta)$. Since $[T, K] = 1$ we have that $[X,K] = 1$, hence
$T = \langle X \rangle^{\Pi} = \langle \cup_{1 \leq j \leq i} X^{K a_j} \rangle^{Ker(\beta)} =
\langle \cup_{1 \leq j \leq i} X^{a_j} \rangle^{Ker(\beta)} $ is a normal subgroup of $Ker(\beta)$ that is finitely generated as a normal subgroup of $Ker(\beta)$. Then $Ker(b)$ is finitely presented. A homological version of this argument works too i.e. if $K, G, \Pi$ are of type $FP_2$  and we know that $Ker(\beta)$ is of type $FP_2$ we can deduce that $Ker(b)$ is of type $FP_2$.

The proof of the homological version of Theorem \ref{Main1} is similar i.e. it is a homological version of the above proof, where we use property $FP_{k}$ instead of $F_{k}$ and we use $\Sigma^{k}( -, \mathbb{Z})$ instead of $\Sigma^k( - )$  for $k = n_0$ and $k = n_0 -1$. Instead of Lemma \ref{gen-lemma} a) we use Lemma \ref{gen-lemma} b). As well  we use that by  Theorem \ref{open} $\Sigma^k (G, \mathbb{Z})$ is open in $S(G)$. With the above modifications we get that $Ker(\beta)$ is of type $FP_{n_0}$. In the final stage of the proof  we use the homological version  of Proposition \ref{Kuck} to deduce that $Ker(b)$ is of type $FP_{n_0}$.

Finally the case when $n_0 = \infty$ is proved as above by using the invariants $\Sigma^{\infty}( - ) = \cap_m \Sigma^m ( - )$ and  $\Sigma^{\infty}( -, \mathbb{Z} ) = \cap_m \Sigma^m ( - , \mathbb{Z})$.
\end{proof}

\begin{remark}  \label{rem:cone} The map $\psi$ from  Theorem \ref{Main1}  can be any map of the type $\mu^{-1} \widetilde{\alpha} + c$, where $\widetilde{\alpha} : G \to \R$ is any discrete character (i.e. non-zero homomorphism with infinite cyclic image) such that $K \subseteq Ker (\widetilde{\alpha})$, $c : G \to \R$ is any discrete character that extends $\phi$ and $\mu$ is any sufficiently small positive rational number, precisely how small  depends on $\widetilde{\alpha}$ and $c$. Since $\widetilde{\alpha}$ can be substituted with $ - \widetilde{\alpha}$ we can remove the restriction that $\mu$ is positive and consider $\mu$ as a non-zero rational number with sufficiently small absolute value, say $|\mu| < \mu_0$. Fixing $c$ and $\widetilde{\alpha}$ the set  $\mu^{-1} \widetilde{\alpha} + c$ described above corresponds to the rational points in a specific open cone inside $\mathbb{R} \widetilde{\alpha} + \mathbb{R} c$ sitting above $c$. The open cone is given by the condition $|\mu^{-1}| > \mu_0^{-1}$. \end{remark}

\begin{proof}[Proof of Theorem \ref{thm:poly1}] We induct on $m \geq 2$. The case $m = 2$ is Theorem \ref{Main1}.
Suppose the result holds for $m-1$.
Let $c : G \to \mathbb{R}$ be a discrete character that extends $\phi$. Consider  a discrete character $\widetilde{\alpha} = \widetilde{\alpha}_1 : G \to \mathbb{R}$ such that $\widetilde{\alpha}(G_1) = 0$ and $\widetilde{\alpha} (G_2) \not= 0$. Let $\mu > 0$ be a rational number. Note that $\mu^{-1} \widetilde{\alpha} + c : G \to \mathbb{R}$ and  $\mu^{-1} \widetilde{\alpha} + c |_{G_2} : G_2 \to \mathbb{R}$ are discrete characters that extend $\phi$. By the remark  above for $\mu$ sufficiently small  we have that
$Ker(\mu^{-1} \widetilde{\alpha} + c |_{G_2})$ is of type $FP_{s+1}$.

Consider now  the subnormal filtration $1  = G_0 \unlhd G_2 \unlhd G_3 \unlhd \ldots \unlhd G_m = G$. By construction $\mu^{-1} \widetilde{\alpha} + c |_{G_2} : G_2 \to \mathbb{R}$ is a discrete character that extends to the discrete character $\mu^{-1} \widetilde{\alpha} + c : G \to \mathbb{R}$. Thus by induction there is a discrete character $\psi : G \to \mathbb{R}$ that extends  $\mu^{-1} \widetilde{\alpha} + c |_{G_2}$ and $Ker(\psi)$ has type $FP_{s+1+ m-2} = FP_{s + m-1}$.

To prove the equivalence of a) with a') in the case where the filtration is normal, consider the following commutative diagram, whose existence is guranteed by the normality assumption:
\[ \xymatrix{
& G_j \ar@{^{(}->}[d]  \ar@{^{(}->}[r]   & G_{j+1} \ar@{^{(}->}[d] \\
 & G \ar@{->>}[d]\ar[r]^{\cong} & G \ar@{->>}[d] \\
G_{j+1}/G_{j} \ar@{^{(}->}[r]  & G/G_{j}  \ar@{->>}[r]   &
G/G_{j+1}  } \]
A discrete character $\widetilde{\alpha_{j}} : G \to \R$ as in a)  descends to a character of $G/G_{j}$, but is not a pull-back of a character of $G/G_{j+1}$, so the sequence in the bottom row has excessive homology. Conversely, if that sequence has excessive homology, there is a character of $G/G_{j}$, hence of $G$, vanishing on $G_{j}$ but not on $G_{j+1}$. \end{proof}

\begin{proof}[Proof of Corollary \ref{cor1}]

We start with the proof of the homological version of Corollary \ref{cor1}. We define $M$ as in the proof of Theorem \ref{Main1} for $\Gamma = F_n$ and $\pi$ the identity map. Thus $M = Ker(\beta) = Ker(b)$ is a normal subgroup of $G$, $G/ M \simeq \mathbb{Z}$ and $M$ is of type $FP_{n_0}$. By construction  $N = Ker (\beta |_{K})$, recall that $\beta |_K = \mu \phi$, where $\mu$ is a suffciently small positive rational number. Since $N$ is not of type $FP_{n_0}$ then at least one of $[\beta |_{K}] = [\phi], [ - \beta |_{K}] = [- \phi]$ is not an element of $\Sigma^{n_0} (K, \mathbb{Z})$. By changing $\beta$ to $ - \beta$ if necessary we can assume that $[\beta |_{K}] = [\phi] \not\in \Sigma^{n_0} (K, \mathbb{Z})$. 

Assume now that $M$ is of type $FP_{n_0 + 1}$, hence $[\beta] \in \Sigma^{ n_0 + 1} (G)$. Recall that $G = \Pi_1 *_K \Pi_2 *_K \ldots *_K \Pi_n$. For $i \geq 2$ we think of $G_i =  \Pi_1 *_K \Pi_2 *_K \ldots *_K \Pi_i$ as an HNN extension with a base group $G_{i-1}$, associated groups that both coincide with $K$ and stable letter $s_i$. By construction $\beta |_{\Pi_1} \in \Sigma^{n_0} (\Pi_1, \mathbb{Z})$ and as in the proof of Theorem \ref{Main1} we have that $[\beta |_{G_i}] \in \Sigma^{ n_0} (G_i, \mathbb{Z})$ for every $i \geq 1$.
By Theorem \ref{SS}, iii) applied for the Bass-Serre tree associated to the HNN extension $G_{n-1} *_K \Pi_n = \langle G_{n-1}, s_n \ | \ s_n k s_n^{ -1} = f_n(k),k \in K \rangle$ with base group $G_{n-1}$, associated subgroups equal to $K$ and stable letter $s_n$,  and using  that $[\beta] \in \Sigma^{ n_0 + 1} (G,  \mathbb{Z} )$ and $[\beta |_{G_{n-1}}] \in \Sigma^{ n_0 } (G, \mathbb{Z})$ we deduce that $ [\beta |_K] \in \Sigma^{ n_0} ( K, \mathbb{Z} )$, a contradiction.

Suppose now that the assumptions of the homotopical version of Corollary \ref{cor1} hold and $n_0 = 1$. By the homological version  there is a character $\psi$ of $G$ such that $M = Ker(\psi)$ is $FP_1$ ( i.e. finitely generated) but not $FP_2$. In particular $M$ is not finitely presented.

Suppose now that the assumptions of the homotopical version of Corollary \ref{cor1} hold and $n_0 \geq 2$. Then we define $\beta$ as above and by the homotopical version of Theorem \ref{Main1}, using that $N$ is finitely generated, $K$ and $G$ are finitely presented,  we deduce that $M = Ker(\beta)$ is finitely presented. By the above argument $M$ is of type $FP_{n_0}$ but is not of type $FP_{n_0 + 1}$. Thus $M$ is of type $F_{n_0}$ but is not of type $F_{n_0 + 1}$. \end{proof}

\section{Some corollaries} \label{section-cor1}

\subsection{ Generalised Thompson groups}
Consider the generalised Thompson group $F_{n, \infty}$ given by the infinite presentation
$$F_{n, \infty} = \langle x_0, x_1, \ldots, x_s, \ldots \ | \  x_i^{ x_j} = x_{i + n-1} \hbox{ for }  i > j \geq 0 \rangle.$$
Some authors use the notation $F_n$ but we avoid it since  it could be confused with a free group of rank $n$ or with the notation for homotopical type $F_n$. Note that $F = F_{2,\infty}$ is one of the classical Thompson groups. It is known that $F_{n, \infty}$ is of type $F_{\infty}$ i.e. finitely presented and of type $FP_{\infty}$ \cite{Brown} but has infinite cohomological dimension since contains free abelian groups of arbitrary rank.

For general $n \geq 2$ the invariant $\Sigma^1(F_{n, \infty})$ was calculated in \cite{B-N-S} and it was shown that the complement of $\Sigma^1(F_{n, \infty})$ in $S(F_{n, \infty})$ has two non-antipodal discrete points, we denote here by $[\chi_1]$ and $[\chi_2]$. The $\Sigma$-invariants of $F = F_{2,\infty}$ in the case $n = 2$ were calculated by Bieri, Geoghegan and the first author in \cite{B-G-K}. The invariant $\Sigma^2(F_{n, \infty})$ was calculated by the first author in \cite{K} using combinatorial methods i.e. splitting the original group in different ways as free product with amalgamation.

\begin{theorem}  \cite{B-G-K} For $F = F_{2, \infty}$ and $m \geq 2$ we have $\Sigma^m(F)^c : = S(F) \setminus \Sigma^m(F) = conv \{ [\chi_1], [\chi_2] \}$.
\end{theorem}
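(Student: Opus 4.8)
The plan is to combine the Bieri--Neumann--Strebel computation $\Sigma^1(F)^c=\{[\chi_1],[\chi_2]\}$ of \cite{B-N-S} with the nesting $\Sigma^1(F)\supseteq\Sigma^2(F)\supseteq\cdots\supseteq\Sigma^\infty(F)=\bigcap_m\Sigma^m(F)$, and to prove two one-sided statements: the lower bound $\mathrm{conv}\{[\chi_1],[\chi_2]\}\subseteq\Sigma^2(F)^c$ and the upper bound $S(F)\setminus\mathrm{conv}\{[\chi_1],[\chi_2]\}\subseteq\Sigma^\infty(F)$. Recall $F^{ab}\cong\Z^2$ with $\chi_1,\chi_2$ spanning $Hom(F,\R)$, so $S(F)$ is a circle and $\mathrm{conv}\{[\chi_1],[\chi_2]\}$ is the shorter closed arc joining the two points. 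Granting both bounds, for every $m\ge 2$ we get $\Sigma^\infty(F)\subseteq\Sigma^m(F)\subseteq\Sigma^2(F)$, hence $\Sigma^2(F)^c\subseteq\Sigma^m(F)^c\subseteq\Sigma^\infty(F)^c$; the lower bound forces $\mathrm{conv}\{[\chi_1],[\chi_2]\}\subseteq\Sigma^2(F)^c$ and the upper bound forces $\Sigma^\infty(F)^c\subseteq\mathrm{conv}\{[\chi_1],[\chi_2]\}$, so all these sets coincide and $\Sigma^m(F)^c=\mathrm{conv}\{[\chi_1],[\chi_2]\}$.

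For the lower bound, the endpoints $[\chi_1],[\chi_2]$ already lie in $\Sigma^1(F)^c\subseteq\Sigma^2(F)^c$, so it remains to show that no interior point of the arc lies in $\Sigma^2(F)$; by openness of $\Sigma^2(F)$ (Theorem~\ref{open}) it suffices to treat rational, hence discrete, interior characters $\chi$. Here I would use the self-similarity of $F$: it splits, in several ways, as an amalgamated free product $\Pi_1*_C\Pi_2$ (or an HNN extension) whose edge group $C$ --- and, with care, whose vertex groups --- are again standardly embedded copies of $F$, the splitting chosen so that $\chi$ is non-zero on each vertex and edge group while $[\chi|_C]$ is proportional to one of the two exceptional characters of $C\cong F$. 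If $[\chi]$ were in $\Sigma^2(F)$, hence in $\Sigma^2(F,\Z)$, then Theorem~\ref{SS}(iii) applied with $n=2$ to the Bass--Serre tree of the splitting --- using $[\chi|_{\Pi_i}]\in\Sigma^1(\Pi_i,\Z)=\Sigma^1(\Pi_i)$, which holds because the restriction is again non-exceptional, together with $\chi|_C\ne 0$ --- would give $[\chi|_C]\in\Sigma^1(C,\Z)=\Sigma^1(C)$, contradicting Bieri--Neumann--Strebel for the copy $C\cong F$. This is the combinatorial splitting method by which $\Sigma^2$ of the groups $F_{n,\infty}$ has been computed, and it in fact determines $\Sigma^2(F)$ completely, yielding also the $\Sigma^2(F)$-part of the upper bound.

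For the upper bound, since $\Sigma^m(F)=\Sigma^m(F,\Z)\cap\Sigma^2(F)$ for $m\ge 2$ and the $\Sigma^2$-part is handled as above, the remaining --- and main --- task is $S(F)\setminus\mathrm{conv}\{[\chi_1],[\chi_2]\}\subseteq\Sigma^\infty(F,\Z)$. The robust route is Bestvina--Brady discrete Morse theory: $F$ acts with finitely many orbits of cells in each dimension on a contractible CAT(0) cube complex $X$ (the Stein--Farley complex), a character $[\chi]$ induces a height function on $X$, and $[\chi]\in\Sigma^m(F,\Z)$ translates into $(m-1)$-acyclicity of the superlevel sets, which one controls through the connectivity of the relevant Morse links --- explicit combinatorial complexes attached to the cubes. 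The content is that, for $[\chi]$ off the closed arc, these links are $n$-connected for \emph{every} $n$, so $[\chi]\in\Sigma^\infty(F,\Z)$. Heuristically this reflects that $F$ is itself an \emph{ascending} HNN extension of a copy of $F$: in the infinite presentation $B:=\langle x_1,x_2,\ldots\rangle\cong F$, conjugation by $x_0$ carries $B$ isomorphically onto the proper subgroup $\langle x_2,x_3,\ldots\rangle$, so $F=B*_\phi$ with stable letter $x_0$ (a mirror structure comes from $x\mapsto 1-x$ on $[0,1]$), and characters pointing in the corresponding ascending directions have very large superlevel sets. One might hope to feed this directly into the tree criteria \ref{SS}(i) or \ref{SSS}, but these require the restrictions of $\chi$ to all vertex and edge groups to be non-zero, while the natural ascending characters vanish on the base group, so a direct application fails without a preliminary perturbation --- which is why the Morse-theoretic computation is the appropriate tool.

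The main obstacle is thus the upper bound, and specifically the passage to $\Sigma^\infty(F,\Z)$ rather than to some fixed finite $\Sigma^m$: it demands the full connectivity analysis of the Morse links in the Stein--Farley complex (equivalently, a substantial induction over the ascending self-similar structure of $F$), and this is where the work behind \cite{B-G-K} is concentrated. The lower bound, and the $\Sigma^2(F)$-part of the upper bound, are comparatively formal once the right family of self-similar amalgam/HNN decompositions has been written down, as they only feed the already-known $\Sigma^1$-data of copies of $F$ into the tree theorems in the ``downward'' and ``upward'' directions.
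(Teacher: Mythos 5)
This theorem is not proved in the paper; it is quoted from \cite{B-G-K} as external input, so there is no in-house proof to compare against. Your two-sided plan --- the lower bound $\mathrm{conv}\{[\chi_1],[\chi_2]\}\subseteq\Sigma^2(F)^c$, the upper bound $S(F)\setminus\mathrm{conv}\{[\chi_1],[\chi_2]\}\subseteq\Sigma^\infty(F)$, and squeezing via the nesting of the $\Sigma$-invariants --- is the correct architecture, and you rightly isolate the upper bound, and in particular the jump from some fixed $\Sigma^m$ to $\Sigma^\infty$, as the substantive difficulty.

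The lower bound argument, however, has a real gap. You propose to split $F$ as $\Pi_1*_C\Pi_2$ (or an HNN extension) with $C$ and the $\Pi_i$ standardly embedded copies of $F$, chosen so that each $[\chi|_{\Pi_i}]$ is non-exceptional while $[\chi|_C]$ is exceptional, and then invoke Theorem~\ref{SS}(iii). But the self-similar decompositions of $F=F_{2,\infty}$ do not have this feature. In the ascending HNN decomposition $F=B*_\phi$ with $B=\langle x_1,x_2,\ldots\rangle\cong F$, both the vertex and the edge restrictions of an interior-arc character $\chi=a\chi_1+b\chi_2$ (with $a,b>0$) are exceptional: $B$ consists of elements supported away from one endpoint of the interval, so one of $\chi_1|_B,\chi_2|_B$ vanishes identically, whence $\chi|_B$ is a positive multiple of the remaining one, i.e.\ a ``germ at an endpoint'' character of $B\cong F$ --- precisely one of the exceptional characters of $B$; the same holds for the edge group $\phi(B)$. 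So the hypothesis $[\chi_v]\in\Sigma^1(G_v,\Z)$ of Theorem~\ref{SS}(iii) already fails at the vertices and the deduction cannot begin. Moreover, $F$ is not known to split nontrivially as an amalgamated free product over a finitely generated subgroup, so the family of amalgam decompositions you invoke ``with care'' would need to be produced explicitly, and it is unclear they exist in the form required. Your outline also silently transfers methods between three different sources: the Stein--Farley/Morse-theoretic computation you describe for the upper bound is the approach of Zaremsky's \cite{Z} (who treats all $F_{n,\infty}$ and all $m$), and the amalgam-splitting technique is attributed in this paper to \cite{K} (the $\Sigma^2$ paper), while \cite{B-G-K} itself argues differently. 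The attribution issues are minor, but the splitting you rely on for the lower bound does not do the job.
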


\begin{theorem} \cite{K} For any $n \geq 2$ we have $\Sigma^2(F_{n, \infty}, \mathbb{Z}) = \Sigma^2(F_{n, \infty}) =  conv \{ [\chi_1], [\chi_2] \}$
\end{theorem}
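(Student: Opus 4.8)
The plan is to compute $\Sigma^2(F_{n,\infty})$ and $\Sigma^2(F_{n,\infty},\mathbb{Z})$ by a two-sided estimate, writing $\Sigma^m(-)^c$ for $S(-)\setminus\Sigma^m(-)$ as in the preceding theorem. Since $F_{n,\infty}$ is finitely presented and of type $FP_\infty$ \cite{Brown}, we have $\Sigma^2(F_{n,\infty})\subseteq\Sigma^2(F_{n,\infty},\mathbb{Z})\subseteq\Sigma^1(F_{n,\infty},\mathbb{Z})=\Sigma^1(F_{n,\infty})$, and by \cite{B-N-S} the last invariant has complement $\{[\chi_1],[\chi_2]\}$; so the two endpoints already lie in $\Sigma^2(F_{n,\infty})^c$, and it remains to prove two things. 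First, the \emph{lower bound}: every $[\chi]\in S(F_{n,\infty})$ lying off the segment $conv\{[\chi_1],[\chi_2]\}$ belongs to $\Sigma^2(F_{n,\infty})$ — this I would prove homotopically, so that it passes to the homological invariant. Second, the \emph{upper bound}: no interior point of $conv\{[\chi_1],[\chi_2]\}$ belongs to $\Sigma^2(F_{n,\infty},\mathbb{Z})$ — this I would prove homologically, so that it passes to the homotopical invariant. Combining the two bounds with $\Sigma^2(F_{n,\infty})\subseteq\Sigma^2(F_{n,\infty},\mathbb{Z})$ pins both complements to exactly $conv\{[\chi_1],[\chi_2]\}$.

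The engine for both bounds is a family of decompositions of $F_{n,\infty}$ as finite graphs of groups, in the spirit of \cite{K}: $F_{n,\infty}$ is the ascending HNN extension of a copy of itself along the shift endomorphism, and, more usefully, it can be written as amalgamated products and HNN extensions obtained by cutting the defining Cantor set (equivalently, subtrees of the $n$-ary rooted tree) into two or more pieces so that the subgroups supported on distinct pieces commute; the vertex and edge groups of such splittings are direct products of $\geq 2$ copies of $F_{n,\infty}$, possibly with free-abelian factors. Insisting on $\geq 2$ factors is what keeps the computation from being circular: the product formula for $\Sigma$-invariants expresses $\Sigma^1$ and $\Sigma^2$ of such a product purely through $\Sigma^0$ and $\Sigma^1$ of the factors — the $\Sigma^2\ast\Sigma^0$ contributions vanish because $\Sigma^0(-)^c=\emptyset$ — so by \cite{B-N-S} the $\Sigma^1$ and $\Sigma^2$ of every vertex and edge group that occurs are completely known.

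For the lower bound, given $[\chi]$ off the segment I would select the splitting adapted to it: one in which $\chi$ restricts nontrivially to every vertex and edge group and in which, because $[\chi]$ avoids $\{[\chi_1],[\chi_2]\}$, the restriction to each vertex group has class in its $\Sigma^2$ and the restriction to each edge group has class in its $\Sigma^1$. Corollary \ref{SSS} — whose homological half is item i) of Theorem \ref{SS} and whose homotopical $\Sigma^2$ half is Meinert's theorem \cite{M} — then gives $[\chi]\in\Sigma^2(F_{n,\infty})$; ranging over the finitely many splittings needed to cover $S(F_{n,\infty})\setminus conv\{[\chi_1],[\chi_2]\}$ finishes this case. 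For the upper bound, suppose $[\chi]\in\Sigma^2(F_{n,\infty},\mathbb{Z})$ for some interior point of the segment and choose a splitting in which $\chi$ restricts nontrivially to all edge groups, all vertex-restrictions have class in the $\Sigma^1$ of their vertex groups, and one edge group $G_e$ (a product of copies of $F_{n,\infty}$) receives a restriction whose class is one of the exceptional points $[\chi_1],[\chi_2]$ of $\Sigma^1(F_{n,\infty})$ sitting inside $S(G_e)$. Item iii) of Theorem \ref{SS} with $n=2$ forces $[\chi|_{G_e}]\in\Sigma^1(G_e,\mathbb{Z})$, contradicting the Bieri--Neumann--Strebel description of $\Sigma^1$ of that product; this establishes the upper bound.

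The step I expect to be the genuine obstacle is the combinatorial one: producing, for every relevant character class, a graph-of-groups decomposition of $F_{n,\infty}$ with controlled (product-of-$F_{n,\infty}$) vertex and edge groups, and checking that the restricted characters land exactly where the Bass--Serre criteria of Theorem \ref{SS}, Corollary \ref{SSS} and \cite{M} require — in particular that, on the open segment, one can keep every vertex-restriction generic while forcing some edge-restriction to hit an exceptional $\Sigma^1$-point. Once that bookkeeping is in place, the finiteness-property conclusions and the coincidence of the homological and homotopical invariants follow formally; and for $n=2$ one may instead simply quote the stronger computation of \cite{B-G-K}.
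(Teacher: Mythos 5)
This theorem is a citation from \cite{K}; the paper does not prove it, only remarks that \cite{K} computes $\Sigma^2(F_{n,\infty})$ ``using combinatorial methods i.e.\ splitting the original group in different ways as free product with amalgamation.'' You have correctly (and tacitly) repaired the printed typo --- the displayed sets are the complements $\Sigma^2(\cdot)^c$ --- and your high-level plan matches the method the paper attributes to \cite{K}: two-sided estimates via graph-of-groups decompositions with product vertex and edge groups, driven by Theorem~\ref{SS}, Corollary~\ref{SSS} and Meinert's $\Sigma^2$-criterion \cite{M}. So there is no proof in this paper to compare against, but your strategy is consistent with its description of \cite{K}.

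There is, however, a genuine error in the step that is supposed to defuse circularity. You assert that for a vertex group $G_v$ which is a product of copies of $F_{n,\infty}$ the ``$\Sigma^2 * \Sigma^0$ contributions vanish because $\Sigma^0(-)^c = \emptyset$.'' This misreads the Bieri--Geoghegan product formula \cite{B-G}: the join of a set $A \subseteq S(G_1)$ with the empty subset of $S(G_2)$ is $A$, not $\emptyset$. Indeed the paper's own application to $K_{m,n} \times F_s$ uses precisely this: the $p=0$ term $\Sigma^0(K_{m,n})^c * \Sigma^2(F_s)^c$ contributes all of $S(F_s)$, not nothing. Consequently $\Sigma^2(G_v)^c$ contains an embedded copy of $\Sigma^2(F_{n,\infty})^c$ in each coordinate sphere, and knowing only $\Sigma^1$ of the factors does not determine $\Sigma^2(G_v)$; the computation is a priori circular. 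The repair exists but must be stated: one must arrange the splittings so that the restriction of $\chi$ to every vertex and edge group is nontrivial on at least two product factors, so that $[\chi|_{G_v}]$ lies off the $\Sigma^2 * \Sigma^0$ and $\Sigma^0 * \Sigma^2$ joins and membership in $\Sigma^2(G_v)$ is decided solely by the $\Sigma^1 * \Sigma^1$ term, which uses only the known $\Sigma^1(F_{n,\infty})$. This is an additional constraint you neither state nor check, and it is inseparable from the combinatorial step you already flag as the ``genuine obstacle.'' As written the proposal is a plan whose central product-formula justification is incorrect and whose splittings are not constructed; it is not yet a proof.
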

Here $ conv \{ [\chi_1], [\chi_2] \}$ denotes the smalles arc with end points $[\chi_1], [\chi_2]$ in the character sphere $S(F_{n, \infty})$.

The classification of $\Sigma^m(F_{n, \infty})$ for arbitrary $m$ and $n$ was completed by Zaremsky in \cite{Z}, using geometric methods like Morse Theory and actions on a  $CAT(0)$-cube complex.

\begin{theorem} \cite{Z} \label{thmZ}  For any $n,m \geq 2$ we have $\Sigma^m(F_{n, \infty}) = \Sigma^2(F_{n, \infty})$.
\end{theorem}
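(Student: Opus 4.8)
The plan is to reduce the statement to a connectivity computation for links in an $F_{n,\infty}$-complex, following the Morse-theoretic route used by \cite{Z}. Since the invariants form a descending chain $\Sigma^2(F_{n,\infty}) \supseteq \Sigma^3(F_{n,\infty}) \supseteq \cdots$, and the preceding theorem gives $\Sigma^2(F_{n,\infty})^c = \operatorname{conv}\{[\chi_1],[\chi_2]\}$, it is enough to prove that every $[\chi] \notin \operatorname{conv}\{[\chi_1],[\chi_2]\}$ already lies in $\Sigma^m(F_{n,\infty})$ for all $m$ --- equivalently, that $\Sigma^\infty(F_{n,\infty}) = \Sigma^2(F_{n,\infty})$, whence all the $\Sigma^m$ with $m\geq 2$ coincide. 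The reverse containment on the arc is free: $[\chi] \in \operatorname{conv}\{[\chi_1],[\chi_2]\}$ forces $[\chi] \notin \Sigma^2(F_{n,\infty}) \supseteq \Sigma^m(F_{n,\infty})$.

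First I would fix the geometry: $F_{n,\infty}$ acts on a contractible $\mathrm{CAT}(0)$ cube complex $X$ (the Stein-Farley complex built from $n$-ary forest pair diagrams), with trivial or finite cell stabilizers and an action that is cocompact in each dimension; this is the complex behind the proof that $F_{n,\infty}$ is of type $F_\infty$. A character $\chi\colon F_{n,\infty}\to\mathbb{R}$ extends, after choosing heights on vertices and interpolating affinely over cubes, to an equivariant Morse function $h_\chi\colon X\to\mathbb{R}$ with $h_\chi(g\cdot x)=\chi(g)+h_\chi(x)$. The Morse-theoretic criterion for BNSR-invariants then says that $[\chi]\in\Sigma^m(F_{n,\infty})$ exactly when the descending links $\operatorname{lk}^{\downarrow}(v)$ of all vertices $v$ of $X$ with respect to $h_\chi$ are $(m-1)$-connected, so $[\chi]\in\Sigma^\infty(F_{n,\infty})$ means all of these are $\infty$-connected.

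The core step is the analysis of the descending link of a vertex of $X$. Up to homotopy it decomposes, roughly, as a join $L^{0}\ast L^{\infty}$ of two subcomplexes that record the combinatorics of expansions accumulating at the two ``ends'' of the index set --- the two ends responsible for the distinguished characters $\chi_1$ and $\chi_2$. When $[\chi]$ lies off the arc $\operatorname{conv}\{[\chi_1],[\chi_2]\}$, the signs of $\chi$ ``at both ends'' are favorable, so each of $L^{0}$ and $L^{\infty}$ is an analogue of the highly connected complexes (complexes of disjoint arcs / matching complexes on an unbounded number of strands) that already power the $F_\infty$ argument, and their connectivities grow without bound; since the join of two complexes whose connectivities tend to infinity again has connectivity tending to infinity, $\operatorname{lk}^{\downarrow}(v)$ is $\infty$-connected and $[\chi]\in\Sigma^\infty(F_{n,\infty})$. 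When $[\chi]$ is on the arc, one (or, at the endpoints, the relevant one) of $L^{0},L^{\infty}$ degenerates and fails even to be simply connected, consistently with $[\chi]\notin\Sigma^2(F_{n,\infty})$.

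The hard part is the link-connectivity estimate: one must pin down $L^{0}$ and $L^{\infty}$ as explicit simplicial complexes and prove that their connectivity tends to infinity throughout the (non-compact) good region, not merely that they are $1$- or $2$-connected. This needs the full strength of the Morse-theoretic machinery --- for instance an auxiliary Morse function on the links together with sharp Hatcher-Wahl / Bestvina-Brady-type connectivity bounds for arc and matching complexes --- applied uniformly over all vertices and all $[\chi]$ outside the arc. A purely combinatorial bootstrap on $m$, in the spirit of the amalgam splittings used for $\Sigma^2(F_{n,\infty})$, does not obviously work, since controlling $\Sigma^{m-1}$ of the amalgamating subgroups becomes as hard as the original problem; this is why the geometric approach of \cite{Z} is the natural route.
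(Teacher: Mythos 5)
This statement is not proved in the paper at all: it is cited verbatim from Zaremsky \cite{Z}, so there is no in-paper argument to compare against. Your sketch is, as far as it goes, a fair high-level description of what Zaremsky actually does: he works with the Stein--Farley cube complex for $F_{n,\infty}$, extends a character to a discrete Morse function, and reduces membership in $\Sigma^m$ to $(m-1)$-connectivity of descending links; the dichotomy between characters on the arc $\operatorname{conv}\{[\chi_1],[\chi_2]\}$ and those off it does come down to the behaviour of the descending links ``at the two ends''. You also correctly read through the typographical slip in the paper's statement of the $\Sigma^2$-theorem (it should be $\Sigma^2(F_{n,\infty})^c = \operatorname{conv}\{[\chi_1],[\chi_2]\}$, as the later use in Corollary~\ref{Thomp} confirms).

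That said, as a proof this is incomplete in exactly the place you flag. The reduction to ``$\Sigma^\infty = \Sigma^2$'' and the general Morse-theoretic framework are standard boilerplate; the entire content of the theorem is the claim that the descending links, for every vertex and every $[\chi]$ off the arc, have connectivity tending to infinity with the vertex's size. Your paragraph labelled ``the hard part'' names the ingredients (identify $L^0$, $L^\infty$, relate them to arc/matching complexes, prove quantitative connectivity bounds, feed through the join) but does not carry out any of them, and the join decomposition itself is asserted only ``up to homotopy, roughly.'' So you have written a plan for a proof rather than a proof. In particular you have not ruled out, for instance, that some intermediate family of vertices has descending links whose connectivity plateaus, which would leave $[\chi]$ in some $\Sigma^k$ but not $\Sigma^{k+1}$ even off the arc --- this is precisely the phenomenon the uniform connectivity estimates are needed to exclude. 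Your closing remark that a purely combinatorial bootstrap via amalgam splittings would stall is a reasonable observation, and consistent with why Zaremsky passed to geometry; but since the present paper simply imports \cite{Z} as a black box, the expected ``proof'' here is the citation itself, not a re-derivation.
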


\begin{corollary} For any $n,m \geq 2$ we have $\Sigma^m(F_{n, \infty}, \mathbb{Z}) = \Sigma^2(F_{n, \infty})$.
\end{corollary}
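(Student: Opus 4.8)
The plan is to deduce this purely formally from the two displayed computations immediately preceding it together with the general identity relating homotopical and homological BNSR-invariants. Recall that for any group $G$ of type $F_m$ and any $m \geq 2$ one has $\Sigma^m(G) = \Sigma^m(G,\mathbb{Z}) \cap \Sigma^2(G)$ (this is noted just before Corollary \ref{SSS}); in particular $\Sigma^m(G) \subseteq \Sigma^m(G,\mathbb{Z})$. We also use the standard monotonicity $\Sigma^m(G,\mathbb{Z}) \subseteq \Sigma^{m-1}(G,\mathbb{Z}) \subseteq \cdots \subseteq \Sigma^2(G,\mathbb{Z})$ from \cite{B-R}, valid because $F_{n,\infty}$ is of type $FP_\infty$ by \cite{Brown}.

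First I would establish the inclusion $\Sigma^2(F_{n,\infty}) \subseteq \Sigma^m(F_{n,\infty},\mathbb{Z})$. By Zaremsky's Theorem \ref{thmZ} we have $\Sigma^m(F_{n,\infty}) = \Sigma^2(F_{n,\infty})$, and by the identity above $\Sigma^m(F_{n,\infty}) \subseteq \Sigma^m(F_{n,\infty},\mathbb{Z})$; combining these gives the claimed inclusion. Next I would establish the reverse inclusion $\Sigma^m(F_{n,\infty},\mathbb{Z}) \subseteq \Sigma^2(F_{n,\infty})$: by monotonicity $\Sigma^m(F_{n,\infty},\mathbb{Z}) \subseteq \Sigma^2(F_{n,\infty},\mathbb{Z})$, and by the second displayed theorem above (the first author's computation in \cite{K}) $\Sigma^2(F_{n,\infty},\mathbb{Z}) = \Sigma^2(F_{n,\infty}) = \operatorname{conv}\{[\chi_1],[\chi_2]\}$. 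Chaining the two inclusions yields
\[
\Sigma^2(F_{n,\infty}) \subseteq \Sigma^m(F_{n,\infty},\mathbb{Z}) \subseteq \Sigma^2(F_{n,\infty},\mathbb{Z}) = \Sigma^2(F_{n,\infty}),
\]
so all the sets coincide and $\Sigma^m(F_{n,\infty},\mathbb{Z}) = \Sigma^2(F_{n,\infty})$, as desired.

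There is no genuine obstacle here: the entire difficulty is packaged into the cited theorems of the first author \cite{K} and of Zaremsky \cite{Z}, and the corollary is just the observation that their homotopical statement upgrades automatically to a homological one via the elementary sandwich above. The only point requiring a line of care is to confirm that the finiteness hypotheses (type $FP_\infty$, hence type $F_m$ and $FP_m$ for all $m$) are in force so that monotonicity and the identity $\Sigma^m = \Sigma^m(-,\mathbb{Z}) \cap \Sigma^2$ apply, which is immediate from \cite{Brown}.
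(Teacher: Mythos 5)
Your argument is correct and is essentially identical to the paper's own proof: both use the sandwich $\Sigma^m(F_{n,\infty}) \subseteq \Sigma^m(F_{n,\infty},\mathbb{Z}) \subseteq \Sigma^2(F_{n,\infty},\mathbb{Z}) = \Sigma^2(F_{n,\infty}) = \Sigma^m(F_{n,\infty})$, combining the inclusion of the homotopical into the homological invariant, monotonicity, the computation of $\Sigma^2(F_{n,\infty},\mathbb{Z})$ from \cite{K}, and Zaremsky's Theorem \ref{thmZ}. Nothing further is needed.
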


\begin{proof}
For a general group $G$ of type $F_m$, where $m \geq 2$, we have that $$\Sigma^m(G) \subseteq \Sigma^m(G, \mathbb{Z}) \subseteq \Sigma^2(G, \mathbb{Z})$$ Applying this for $G = F_{n, \infty}$ we have
$$\Sigma^m(F_{n, \infty}) \subseteq \Sigma^m(F_{n, \infty}, \mathbb{Z}) \subseteq \Sigma^2(F_{n, \infty}, \mathbb{Z}) = \Sigma^2(F_{n, \infty}) = \Sigma^m(F_{n, \infty}).$$
\end{proof}

\begin{corollary} \label{Thomp} Let $\chi : F_{n, \infty} \to \mathbb{Z}$ be a character with $N = Ker (\chi)$. Then  

a) $N$ is finitely generated if and only if $[\chi] \notin  \{ [\chi_1], [\chi_2], [- \chi_1], [ - \chi_2]  \}$;

b) $N$ is
 finitely presented  if and only if $[\chi], [ - \chi] \in S(F_{n, \infty}) \setminus conv \{ [\chi_1], [\chi_2] \}$;
 
 c) $N$ is of type $F_{\infty}$ if and only if $N$ is finitely presented;
 
 d) $N$ is of type $FP_2$ if and only if $N$ is finitely presented;
 
 e) $N$ is of type $FP_m$ for some $m \geq 2$ if and only if $N$ is finitely presented;
 
In particular for every normal subgroup $S$ of $F_{n, \infty}$ such that $F_{n, \infty}/ S \simeq \mathbb{Z}^k$ for some $k \geq 2$, there exists a subgroup $N$ of $F_{n, \infty}$ such that $S \subseteq N $, $F_{n, \infty}/ N \simeq \mathbb{Z}$ and $N$ is of type $F_{\infty}$.
\end{corollary}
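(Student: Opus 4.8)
The plan is to obtain a)--e) as immediate consequences of the BNSR criterion (Theorem~\ref{criterion}) applied to the descriptions of $\Sigma^m(G)$ and $\Sigma^m(G,\mathbb{Z})$ recalled above, where $G:=F_{n,\infty}$, and then to deduce the ``in particular'' clause by an elementary observation on the character sphere $S(G)$. First I would set up the reduction. A character being by convention non-trivial, after rescaling we may take $\chi\colon G\twoheadrightarrow\mathbb{Z}$, so that $G/N\cong\mathbb{Z}$; in particular $N$ contains the commutator subgroup of $G$, and a character of $G$ vanishes on $N$ exactly when it factors through $G/N\cong\mathbb{Z}$. Hence $S(G,N)=\{[\chi],[-\chi]\}$, and Theorem~\ref{criterion} gives: for $m\geq 1$, the group $N$ is of type $F_m$ (resp.\ $FP_m$) if and only if $\{[\chi],[-\chi]\}\subseteq\Sigma^m(G)$ (resp.\ $\subseteq\Sigma^m(G,\mathbb{Z})$), while $N$ is of type $F_\infty$ iff $\{[\chi],[-\chi]\}\subseteq\bigcap_{m\geq 1}\Sigma^m(G)$.

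Parts a)--e) then follow by reading off membership. For a) take $m=1$: since $S(G)\setminus\Sigma^1(G)=\{[\chi_1],[\chi_2]\}$, the group $N$ is finitely generated iff neither $[\chi]$ nor $[-\chi]$ lies in $\{[\chi_1],[\chi_2]\}$, equivalently iff $[\chi]\notin\{[\chi_1],[\chi_2],[-\chi_1],[-\chi_2]\}$. For b) take $m=2$: since $S(G)\setminus\Sigma^2(G)=conv\{[\chi_1],[\chi_2]\}$, the group $N$ is finitely presented (equivalently of type $F_2$) iff $[\chi],[-\chi]\in S(G)\setminus conv\{[\chi_1],[\chi_2]\}$. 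For c), d), e), recall from Theorem~\ref{thmZ} and the corollary following it that $\Sigma^m(G)=\Sigma^m(G,\mathbb{Z})=\Sigma^2(G)$ for every $m\geq 2$, whence also $\bigcap_{m\geq 1}\Sigma^m(G)=\Sigma^2(G)$ (using $\Sigma^1(G)\supseteq\Sigma^2(G)$); consequently, for any $m\geq 2$ and for $m=\infty$, the membership condition from the reduction step is literally the one appearing in b), so each of ``$N$ of type $F_\infty$'', ``$N$ of type $FP_2$'', and ``$N$ of type $FP_m$ for some $m\geq 2$'' is equivalent to $N$ being finitely presented.

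For the final clause, let $S\unlhd G$ with $G/S\cong\mathbb{Z}^k$, $k\geq 2$, and let $V\subseteq Hom(G,\mathbb{R})$ be the subspace of characters that factor through $G/S\cong\mathbb{Z}^k$; then $V$ is $k$-dimensional and spanned by integral characters, so its image $S(V)\subseteq S(G)$ is a subsphere of dimension $k-1\geq 1$ in which the rational points are dense. Put $W:=\mathbb{R}\chi_1+\mathbb{R}\chi_2$, a $2$-dimensional subspace (as $[\chi_1],[\chi_2]$ are distinct and non-antipodal), so $S(W)$ is a circle, and note that $B:=conv\{[\chi_1],[\chi_2]\}\cup conv\{[-\chi_1],[-\chi_2]\}$ is a proper closed subset of $S(W)$, since it omits the two open arcs of $S(W)$ joining $[\chi_1]$ to $[-\chi_2]$ and $[\chi_2]$ to $[-\chi_1]$. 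If $S(V)\subseteq B$ then $S(V)\subseteq S(W)$, forcing $V\subseteq W$ and hence $k\leq 2$, so $V=W$ and $S(V)=S(W)\not\subseteq B$, a contradiction. Therefore $S(V)\not\subseteq B$; as $B$ is closed, $S(V)\setminus B$ is non-empty and open in $S(V)$, hence contains a rational point, which after rescaling is an epimorphism $\chi\colon G\twoheadrightarrow\mathbb{Z}$ vanishing on $S$. Then $N:=Ker(\chi)\supseteq S$, $G/N\cong\mathbb{Z}$, and $[\chi],[-\chi]\notin conv\{[\chi_1],[\chi_2]\}$ (the inclusion of $conv\{[-\chi_1],[-\chi_2]\}$ in $B$ is exactly what takes care of $[-\chi]$), so by b) and c) the group $N$ is of type $F_\infty$, as desired.

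Most of the work is bookkeeping once Theorem~\ref{criterion}, Theorem~\ref{thmZ} and the computations of $\Sigma^1(G)$ and $\Sigma^2(G)$ are in hand, so I do not expect a serious obstacle; the one step requiring a genuine, if elementary, geometric argument is the last one, showing that the subsphere $S(V)$ of dimension $\geq 1$ cannot be contained in the union $B$ of the two arcs. This is precisely what produces the desired codimension-one cocyclic overgroup $N\supseteq S$ of type $F_\infty$, and it is the part I would write out in full detail.
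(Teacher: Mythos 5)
Your proof of parts a)--e) is essentially identical to the paper's: you read off the finiteness properties of $N$ from the computed $\Sigma$-invariants via Theorem~\ref{criterion}, using $S(G,N)=\{[\chi],[-\chi]\}$ and the stabilization $\Sigma^m(F_{n,\infty})=\Sigma^m(F_{n,\infty},\Z)=\Sigma^2(F_{n,\infty})$ for $m\geq 2$. That part matches the paper step for step.

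For the final ``in particular'' clause you actually do more than the paper. The paper's argument fixes $N_0=\mathrm{Ker}(\chi_1)\cap\mathrm{Ker}(\chi_2)$, observes that $conv\{[\chi_1],[\chi_2]\}\cup conv\{[-\chi_1],[-\chi_2]\}$ is a proper subset of the circle $S(G,N_0)$, and then picks a discrete character $\chi$ with $\chi(N_0)=0$ avoiding that set. As written this only produces an $N\supseteq N_0$, i.e.\ it handles the case $S=N_0$; the general $S$ with $G/S\cong\Z^k$ ($k\geq 2$) is left implicit. Your dimension argument fills that gap cleanly: if the $(k-1)$-sphere $S(V)$ of characters vanishing on $S$ were contained in $B$, it would lie in the circle $S(W)$ spanned by $\chi_1,\chi_2$, forcing $V\subseteq W$ and hence $k\leq 2$, hence $V=W$ and $S(V)=S(W)\not\subseteq B$, a contradiction; then density of rational points and closedness of $B$ produce the desired discrete character vanishing on $S$. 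Same geometric idea as the paper, but carried out in full generality rather than for the special case $S=N_0$. The argument is correct.
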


\begin{proof} By Theorem \ref{criterion}  $N$ is of type $F_m$ if and only if $[\chi], [- \chi] \in \Sigma^{m} (F_{n, \infty})$. By the above results $\Sigma^1(F_{n, \infty}) = S(F_{n, \infty}) \setminus \{ [\chi_1], [\chi_2] \}$ and for $ m \geq 2$ we have $\Sigma^{m} (F_{n, \infty}) = S(F_{n, \infty}) \setminus conv \{ [\chi_1], [\chi_2] \}$.

 By Theorem \ref{criterion}  $N$ is of type $FP_m$ if and only if $[\chi], [- \chi] \in \Sigma^{m} (F_{n, \infty}, \mathbb{Z})$. By the above results for $ m \geq 2$ we have $\Sigma^{m} (F_{n, \infty}, \mathbb{Z}) = S(F_{n, \infty}) \setminus conv \{ [\chi_1], [\chi_2] \}$.

 The final part follows from Theorem \ref{criterion} and the fact that $conv \{ [\chi_1], [\chi_2] \} \cup conv \{ [- \chi_1], [ - \chi_2] \}$ is strictly inside $S(G, N_0) = \{ [\chi] \in S(G) \ | \ \chi(N_0) = 0 \}$, where $N_0 = Ker (\chi_1) \cap Ker (\chi_2)$. Thus there exists a character  $\chi : F_{n, \infty} \to \mathbb{Z}$ such that  $[\chi], [ - \chi] \in S(F_{n, \infty}) \setminus conv \{ [\chi_1], [\chi_2] \}$ and $\chi(N_0) = 0$. We set $N = Ker (\chi)$.
\end{proof}

\begin{corollary} \label{Thompson} Suppose that $ 1 \to F_{n, \infty} \to G \to \Gamma \to 1$ is a short exact sequence of groups with $G$ of type $F_{n_0}$ (resp. $FP_{n_0}$). Suppose that one of the following conditions holds

a) the excessive homology is at least 2;

b) the excessive homology is exactly 1, $n_0 \geq 2$, $\chi : F_{n, \infty} \to \mathbb{Z}$ is the unique character (up to a sign) such that $\chi(K_0) = 0$, where $K_0 = Ker (F_{n, \infty} \to G^{ ab}/ Tor)$ and $[\chi] \notin conv \{ [\chi_1], [\chi_2] \} \cup conv \{ [- \chi_1], [ - \chi_2] \}$;

c) the excessive homology is exactly 1, $n_0 = 1$, $\chi : F_{n, \infty} \to \mathbb{Z}$ is the unique character (up to a sign) such that $\chi(K_0) = 0$, where $K_0 = Ker (F_{n, \infty} \to G^{ ab}/ Tor)$ and $[\chi] \notin  \{ [\chi_1], [\chi_2], [- \chi_1], [ - \chi_2] \}$;

Then  there is a normal  subgroup $M$ of $G$ such that $G/ M \simeq \mathbb{Z}$ and $M$ is of type $F_{n_0}$ (resp. $FP_{n_0}$). Furthermore if $G$ is of type $F_{\infty}$ (resp. $FP_{\infty}$)  then $M$ is of type $F_{\infty}$ (resp. $FP_{\infty}$).
\end{corollary}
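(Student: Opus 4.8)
The plan is to apply Theorem~\ref{Main1} to the extension $1\to K\to G\to\Gamma\to1$ with $K=F_{n,\infty}$, feeding it a character of $F_{n,\infty}$ whose kernel is controlled by Corollary~\ref{Thomp}. Since $F_{n,\infty}$ is of type $F_\infty$ (hence of type $F_{n_0}$ and $FP_{n_0}$), the only datum to be produced by hand is a \emph{discrete} character $\phi\colon F_{n,\infty}\to\R$ that extends to a real character of $G$ and whose kernel $N$ is of type $F_{n_0-1}$ (resp. $FP_{n_0-1}$). Theorem~\ref{Main1} then furnishes a discrete $\psi\colon G\to\R$ extending $\phi$ with $M=\ker\psi$ of type $F_{n_0}$ (resp. $FP_{n_0}$), and since $\psi$ is a non-zero discrete character, $M$ is normal in $G$ with $G/M\cong\Z$.

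To construct $\phi$, let $W\subseteq\mathrm{Hom}(F_{n,\infty},\R)$ be the image of the restriction map $\mathrm{Hom}(G,\R)\to\mathrm{Hom}(F_{n,\infty},\R)$: this is exactly the set of characters of $F_{n,\infty}$ that extend to $G$, it is defined over $\Q$ (so rational, equivalently discrete, directions are dense in the corresponding subsphere $S(W)\subseteq S(F_{n,\infty})$), and $\dim W$ equals $b_1(G)-b_1(\Gamma)$, i.e. the dimension of the excessive homology. Recall that $\Sigma^1(F_{n,\infty})^c=\{[\chi_1],[\chi_2]\}$ and, for $m\ge2$, $\Sigma^m(F_{n,\infty})^c=\Sigma^m(F_{n,\infty},\Z)^c=\mathrm{conv}\{[\chi_1],[\chi_2]\}$; put $B=\mathrm{conv}\{[\chi_1],[\chi_2]\}\cup\mathrm{conv}\{[-\chi_1],[-\chi_2]\}$, which lies in the great circle spanned by $\chi_1$ and $\chi_2$ but, $[\chi_1]$ and $[\chi_2]$ being non-antipodal, does not fill it. In case a) one has $\dim W\ge2$, so $S(W)$ cannot be contained in $B$ (otherwise $W$ would lie in the plane $\R\chi_1+\R\chi_2$ and hence equal it, so that $S(W)$ is the whole great circle, which $B$ does not fill); thus $S(W)\setminus B$ is a non-empty open subset of $S(W)$, and we take $\phi$ to be a discrete character in it. In cases b) and c) one has $\dim W=1$ and $W=\R\chi$, where $\chi$ is the character with $\ker\chi=K_0=\ker(F_{n,\infty}\to G^{ab}/Tor)$ (indeed $W$ consists of characters factoring through $F_{n,\infty}/K_0\cong\Z$); we take $\phi=\chi$, and the stated hypothesis puts $[\chi],[-\chi]$ outside $\mathrm{conv}\{[\chi_1],[\chi_2]\}$ in case b) and outside $\{[\chi_1],[\chi_2]\}$ in case c). By Corollary~\ref{Thomp}, $N=\ker\phi$ is then of type $F_\infty$ (resp. $FP_\infty$) in cases a) and b) and finitely generated in case c); since $n_0=1$ in case c), in all three cases $N$ is of type $F_{n_0-1}$ (resp. $FP_{n_0-1}$), as needed.

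With such a $\phi$, Theorem~\ref{Main1} applies as soon as $\Gamma^{ab}$ is infinite and yields the desired $M$ as in the first paragraph. When $\Gamma^{ab}$ is finite the conclusion follows directly: choose a discrete $c\colon G\to\Z$ whose restriction to $F_{n,\infty}$ is a non-zero multiple of $\phi$ (possible because the excessive homology is positive), put $M=\ker c$, and note that $M\cap F_{n,\infty}=\ker(c|_{F_{n,\infty}})=N$ while the image $\Gamma_1$ of $M$ in $\Gamma$ has finite index there. As $G$ is of type $F_{n_0}$ and $F_{n,\infty}$ of type $F_{n_0-1}$, the quotient $\Gamma$, and with it $\Gamma_1$, is of type $F_{n_0}$, so the extension $1\to N\to M\to\Gamma_1\to1$ makes $M$ of type $F_{n_0}$ (resp. $FP_{n_0}$), with $G/M\cong\Z$. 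For the final assertion of the Corollary, in cases a) and b) we moreover have $K$, $G$ and $N$ all of type $F_\infty$ (resp. $FP_\infty$), so the ``furthermore'' clause of Theorem~\ref{Main1} (or of the direct argument) lets $M$ be chosen of type $F_\infty$ (resp. $FP_\infty$).

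The only genuinely case-sensitive and potentially delicate point is the projective-geometry step above: that in case a) the excessive-homology subspace $W$ is never swallowed by the arc $B$, and that $S(W)\setminus B$ then contains a rational direction. Everything else is formal; in particular one need not treat the homotopical and homological versions separately, since $\Sigma^m(F_{n,\infty})=\Sigma^m(F_{n,\infty},\Z)$ for all $m\ge1$.
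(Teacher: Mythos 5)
Your proof follows the same overall strategy as the paper's: feed Theorem~\ref{Main1}, with $K = F_{n,\infty}$, a discrete character $\phi$ of $F_{n,\infty}$ that extends to $G$ and whose kernel is controlled by the classification of $\Sigma^{\bullet}(F_{n,\infty})$. In cases b) and c) your choice $\phi = \chi$ is exactly the paper's; in case a) you re-derive (via the dimension-count on $W$ and the fact that $B = \mathrm{conv}\{[\chi_1],[\chi_2]\} \cup \mathrm{conv}\{[-\chi_1],[-\chi_2]\}$ does not fill its great circle since $[\chi_1],[\chi_2]$ are non-antipodal) what the paper obtains by citing the final clause of Corollary~\ref{Thomp} applied to $S = K_0$; these are the same argument stated at different levels of granularity. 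The one substantive thing you add, and which the paper does not address, is the case $\Gamma^{ab}$ finite: Theorem~\ref{Main1} has $\Gamma^{ab}$ infinite among its hypotheses, but none of a), b), c) forces it, since excessive homology is a condition on $\ker(H_1(G;\R)\to H_1(\Gamma;\R))$ and says nothing about $b_1(\Gamma)$ itself. Your direct argument for that case is correct: when $\Gamma^{ab}$ is finite the extension $c$ of $\phi$ to $G$ is unique and automatically discrete, $M=\ker c$ meets $K$ in $N$ and surjects onto a finite-index $\Gamma_1\le\Gamma$, and since $N$ is $F_{n_0}$ in each of a), b), c) (namely $F_\infty$ in a), b), and $F_1$ with $n_0=1$ in c)) while $\Gamma_1$ inherits type $F_{n_0}$ from $\Gamma$, the extension gives $M$ of type $F_{n_0}$. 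Your restriction of the \emph{Furthermore} clause to cases a), b) is a sensible caution, since in case c) nothing in the hypothesis prevents $[\chi]$ from lying in the interior of $\mathrm{conv}\{[\chi_1],[\chi_2]\}$, where $N$ fails to be $FP_2$ and the $F_\infty$ clause of Theorem~\ref{Main1} is not available.
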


\begin{proof} It is enough to show that   the conditions of  Theorem \ref{Main1}  applied for $K = F_{n, \infty}$ hold. Indeed for a) we can use the final part of  Corollary \ref{Thomp} applied for $S =  Ker (F_{n, \infty} \to G^{ ab}/ Tor)$. For b) we  can deduce from Theorem \ref{criterion}  and Theorem \ref{thmZ} that $[\chi] \in \Sigma^{\infty}(F_{n, \infty})$, hence $Ker (\chi)$ is of type $F_{\infty}$, in particular is of type $F_{n_0}$. For c) we  can deduce from Theorem \ref{criterion}  that $[\chi] \in \Sigma^{1}(F_{n, \infty})$, hence $Ker (\chi)$ is finitely generated.
\end{proof}

\subsection{Some corollaries that use the Novikov ring $ \widehat{\mathbb{Z} G}_{\chi}$}

Let $\chi : G \to \mathbb{R}$ be a character i.e. a non-trivial homomorphism. We start by recalling the definition of the Novikov ring   $\widehat{\mathbb{Z} G}_{\chi}$ that will be used in the proof of the following result. First write $\widehat{\mathbb{Z} G}$ for the set of all formal (possibly infinite) sums $\lambda = \sum_{g \in G} z_g g$, where $ z_g \in \mathbb{Z}$. By definition  $supp(\lambda) = \{ g \in G \ | \ z_g \not= 0 \}$ and
$\widehat{\mathbb{Z} G}_{\chi}$  is the set of elements $\lambda \in \widehat{\mathbb{Z} G}$ for which for every $r \in \mathbb{R}$ we have that the set $supp (\lambda) \cap \chi^{ -1} ((- \infty, r]) $ is finite for every $r \in \mathbb{R}$.

Let $G$ be a group of type $FP_3$ and $$\ldots \to \mathbb{Z} G^{k_3} \to \mathbb{Z} G^{k_2} \to \mathbb{Z} G^{k_1} \to \mathbb{Z} G^{k_0} \to \mathbb{Z} \to 0$$ be a free resolution of the trivial $\mathbb{Z} G$-module $\mathbb{Z}$. Define
$$
\chi_3(G) = max \{ k_0 - k_1 + k_2 - k_3 \}$$ where the maximum is over all possible free resolutions as above.

\begin{theorem} \label{incoherent} 
Let $ 1 \to K \to G \to \Gamma \to 1$ be a short exact sequence of groups with excessive homology such that $K$ is finitely generated, $\Gamma^{ab}$ is infinite and $G$ is of type $FP_3$ with $\chi_3(G) > 0$.
Then $G$ is incoherent.
\end{theorem}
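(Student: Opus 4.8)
I would aim, as in Corollary~\ref{general}, to exhibit inside $G$ a finitely generated subgroup that is not finitely presented, using the excessive homology to manufacture the subgroup and $\chi_3(G)>0$ to obstruct its finite presentation. \emph{Construction of the subgroup.} Since $G$ is of type $FP_3$ it is finitely generated, and $K$ is finitely generated by hypothesis, so $\Gamma=G/K$ is finitely generated. Excessive homology is equivalent to non-surjectivity of the inflation map $\mathrm{Hom}(\Gamma,\R)\to\mathrm{Hom}(G,\R)$, i.e. to the image $V$ of the restriction $\mathrm{Hom}(G,\R)\to\mathrm{Hom}(K,\R)$ being nonzero; since $V$ is a nonzero $\Q$-rational subspace of the finite dimensional space $\mathrm{Hom}(K,\R)$, it contains a nonzero discrete character $\phi\colon K\to\R$, and by construction $\phi$ extends to a real character of $G$. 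Applying Theorem~\ref{Main1} with $n_0=1$ (the hypotheses ``$G,K$ finitely generated'', ``$\Gamma^{ab}$ infinite'' and ``$\phi$ extends to $G$'' hold, $Ker(\phi)$ being automatically of type $F_0$) yields a discrete character $\psi\colon G\to\R$ extending $\phi$ with $M:=Ker(\psi)$ finitely generated; after rescaling, $\psi(G)=\Z$, so $G/M\cong\Z$ and $M\supseteq G'$.

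\emph{Obstructing finite presentation.} Suppose, for contradiction, that $M$ is finitely presented, i.e. of type $FP_2$. By Theorem~\ref{criterion}(a), $S(G,M)=\{[\psi],[-\psi]\}\subseteq\Sigma^2(G,\Z)$, so in particular $[\psi]\in\Sigma^2(G,\Z)$. By the Novikov (Sikorav--Bieri--Renz) description of the BNSR-invariants this forces $H_i\bigl(G;\widehat{\Z G}_{-\psi}\bigr)=0$ for $i\le 2$. Choose a free resolution $\cdots\to\Z G^{k_3}\to\Z G^{k_2}\to\Z G^{k_1}\to\Z G^{k_0}\to\Z\to 0$ with $k_0-k_1+k_2-k_3=\chi_3(G)$, put $R:=\widehat{\Z G}_{-\psi}$, and tensor its degree $\le 3$ truncation with $R$. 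For $t\in G$ with $\psi(t)>0$ the element $1-t^{-1}$ is a unit of $R$, whence $R\otimes_{\Z G}\Z=0$; combining this with the vanishing of $H_0,H_1,H_2$ and splitting off free direct summands successively shows that $\ker(R^{k_1}\to R^{k_0})$ and then $Z:=\ker(R^{k_2}\to R^{k_1})$ are stably free, that $Z$ has rank $k_0-k_1+k_2$, and that $Z$ is a direct summand of $R^{k_3}$. Since a stably free $R$-module cannot be a direct summand of a free $R$-module of strictly smaller rank, $k_0-k_1+k_2\le k_3$, i.e. $\chi_3(G)\le 0$ --- contradiction. Hence $M$ is finitely generated but not finitely presented, so $G$ is incoherent.

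\emph{Where the difficulty lies.} The construction step is a routine combination of Theorem~\ref{Main1} in the case $n_0=1$ with the elementary observation that $\Gamma$ is finitely generated. The substance is the second step, which I would isolate as the lemma: \emph{if $G$ has type $FP_3$ and $\chi_3(G)>0$ then $\Sigma^2(G,\Z)=\emptyset$}. Its proof requires two classical but delicate ingredients --- the exact Novikov-homology criterion for membership in $\Sigma^2(-,\Z)$, with the correct one of the two Novikov rings attached, and a finiteness property of $\widehat{\Z G}_{-\psi}$ (stable finiteness, or at least a faithful nonnegative rank function on finitely generated projectives) --- and it is this ring-theoretic input that I expect to be the real obstacle, being also the place where any restriction on $G$ (for instance that $\Z G$ be stably finite) would enter. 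Granting the lemma, the theorem is immediate, and the analogous statement under the hypotheses of Corollary~\ref{general} follows because there $\chi_3(G)\ge\chi(G)=\chi(K)\chi(\Gamma)>0$.
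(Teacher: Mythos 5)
Your construction step — taking a discrete $\phi \colon K \to \R$ coming from the excessive homology and applying Theorem~\ref{Main1} with $n_0 = 1$ to produce a finitely generated $M = Ker(\psi)$ with $G/M \cong \Z$ — matches the paper's (which just invokes \cite[Theorem 1]{F-V} directly). Your obstruction step also starts as the paper's does: assuming $M$ finitely presented, deducing $[\psi]\in\Sigma^2(G,\Z)$, converting that to vanishing of $Tor_i^{\Z G}(\Z,\widehat{\Z G}_{\chi})$ for $i\le 2$, and peeling off free summands from the resolution tensored with the Novikov ring. Up to that point you reproduce the paper's argument.

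The genuine gap is at the very last step, and you flag it yourself: you assert that a stably free module of rank $k_0-k_1+k_2$ over $R = \widehat{\Z G}_{-\psi}$ cannot be a direct summand of $R^{k_3}$ when $k_0-k_1+k_2 > k_3$. That statement amounts to stable finiteness of the Novikov ring, which is not among the hypotheses and is not known in general. Leaving it as a ``ring-theoretic input I expect to be the real obstacle'' does not close the proof. The paper's resolution is a short trick you did not find: after extracting the surjection of \emph{free} modules $\widehat{\Z G}_{\chi}^{k_1} \oplus \widehat{\Z G}_{\chi}^{k_3} \twoheadrightarrow \widehat{\Z G}_{\chi}^{k_2} \oplus \widehat{\Z G}_{\chi}^{k_0}$, one applies $-\otimes_{\widehat{\Z G}_{\chi}} \widehat{\Z [t^{\pm1}]}_{\chi_0}$, where $\widehat{\Z [t^{\pm1}]}_{\chi_0} = \Z[[t]][1/t]$ is the Novikov ring of the quotient $G/M\cong\Z$ and is a ring-quotient of $\widehat{\Z G}_{\chi}$ (the coefficient sums are finite because $\chi$ is constant on each $M$-coset). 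This yields a surjection $R^{k_1+k_3}\twoheadrightarrow R^{k_0+k_2}$ over the domain $R = \Z[[t]][1/t]$, for which $k_1+k_3 \ge k_0+k_2$ is immediate. No stable finiteness of the full Novikov ring is needed. So the key missing idea in your write-up is to pass to the one-variable Novikov quotient; without it your proof is incomplete.
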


\begin{proof} By \cite[Theorem 1]{F-V} there is a normal subgroup $M$ of $G$ such that $G/ M \simeq \mathbb{Z}$ and $M$ is finitely generated. Suppose that $G$ is coherent. Then $M$ is finitely presented, in particular it is of type $FP_2$. Then for $\chi : G \to \mathbb{Z}$ with $M = Ker ( \chi)$ we have that $\pm [\chi] \in \Sigma^2(G, \mathbb{Z})$. Thus by \cite{S},  \cite[Appendix]{B} $[\chi] \in \Sigma^j(G, \mathbb{Z})$ if and only if  $Tor_i^{\mathbb{Z} G}( \mathbb{Z}, \widehat{\mathbb{Z} G}_{\chi}) = 0$ for $j \geq i$, here this applies for $i = 2$. Let $$\ldots \to \mathbb{Z} G^{k_3} \to \mathbb{Z} G^{k_2} \to \mathbb{Z} G^{k_1} \to \mathbb{Z} G^{k_0} \to \mathbb{Z} \to 0$$ be a free resolution of the trivial $\mathbb{Z} G$-module $\mathbb{Z}$ such that $0 < \chi_3(G) = k_0 - k_1 + k_2 - k_3$. Applying $- \otimes_{\mathbb{Z} G} \widehat{\mathbb{Z} G}_{\chi}$ we obtain an exact complex
\[\begin{tikzcd}
\widehat{\mathbb{Z} G}_{\chi}^{k_3} \arrow{r}{\partial_3}  & \widehat{\mathbb{Z} G}_{\chi}^{k_2}  \arrow{r}{\partial_2} & \widehat{\mathbb{Z} G}_{\chi}^{k_1}  \arrow{r}{\partial_1} & \widehat{\mathbb{Z} G}_{\chi}^{k_0}  \arrow{r}{\partial_0} & \mathbb{Z} \otimes_{\mathbb{Z} G} \widehat{\mathbb{Z} G}_{\chi} = 0  \arrow{r} & 0 \end{tikzcd}.
\]
Let $P_1 = Ker (\partial_1)$ and $P_2 = Ker (\partial_2)$.  Then since $\widehat{\mathbb{Z} G}_{\chi}^{k_0}$ is free as $\widehat{\mathbb{Z} G}_{\chi}$-module we have spliting
$\widehat{\mathbb{Z} G}_{\chi}^{k_1} \simeq  \widehat{\mathbb{Z} G}_{\chi}^{k_0} \oplus P_1$
and $P_1$ is a projective  $\widehat{\mathbb{Z} G}_{\chi}$-module. Hence
$ \widehat{\mathbb{Z} G}_{\chi}^{k_2} \simeq P_1 \oplus P_2$
and $P_2$ is a projective  $\widehat{\mathbb{Z} G}_{\chi}$-module. Finally there is an epimorphism $ \widehat{\mathbb{Z} G}_{\chi}^{k_3} \to P_2$ of $\widehat{\mathbb{Z} G}_{\chi}$-modules. This induces an epimorphism $$ \varphi :  \widehat{\mathbb{Z} G}_{\chi}^{k_1}  \oplus  \widehat{\mathbb{Z} G}_{\chi}^{k_3}  \to  \widehat{\mathbb{Z} G}_{\chi}^{k_2}  \oplus  \widehat{\mathbb{Z} G}_{\chi}^{k_0} $$ of $\widehat{\mathbb{Z} G}_{\chi}$-modules.
Note that since $M = Ker (\chi)$, $G/ M \simeq \mathbb{Z} = \langle t \rangle$, for $\chi_0 : G/ M \to \mathbb{Z}$ induced by $\chi$,  $\chi_0 (t) > 0$, we have that $\widehat{\mathbb{Z}[t^{ \pm 1}]}_{\chi_0} = \mathbb{Z}[[t]] [1/t] = R$ is a quotient of  $ \widehat{\mathbb{Z} G}_{\chi}$. Hence $\varphi$ induces an epimorphism of $R$-modules
$$\widetilde{\varphi} : R^{k_1 + k_3} \to R^{k_0 + k_2}$$
Since $R$ is a domain, this implies that $k_1 + k_3 \geq k_0 + k_2$, a contradiction.
\end{proof}

\begin{lemma}\label{Lemma CW}\cite{Wall}
Let $1\rightarrow C \rightarrow D \rightarrow E \rightarrow 1$ be a short exact sequence of groups of type $F$. Suppose that there are classifying spaces $K(C,1)$ and $K(E,1)$ with $\alpha_{t}(C)$ and $\alpha_{t}(E)$ $t$-cells, respectively. Then, there is a $K(D,1)$ complex with $\alpha_{i}(D)$ $i$-cells such that
\[ \alpha_{i}(D)= {\mathlarger{\sum}}_{0\leq t \leq i} \alpha_{t}(C)\alpha_{i-t}(E).\]
\end{lemma}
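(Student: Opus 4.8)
The plan is to realise a $K(D,1)$ as the total space of a fibration over $Y := K(E,1)$ with fibre $Z := K(C,1)$, and then to put on this total space a CW structure built one skeleton of $Y$ at a time, with a copy of the given complex $Z$ sitting over each cell of $Y$; this implements the standard fibration $K(C,1) \to K(D,1) \to K(E,1)$ attached to the extension (cf.\ \cite{Wall}).

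First I would recall the standard construction replacing the map $f \colon K(D,1) \to Y$ induced by the projection $D \twoheadrightarrow E$ by a fibration $p \colon W \to Y$ with $W \simeq K(D,1)$ (mapping path space). Inspecting the long exact sequence of homotopy groups of $p$ and using $\pi_{i}(Y) = 0$ for $i \geq 2$, one sees that the fibre is connected, has fundamental group $\ker(D \to E) = C$, and has vanishing higher homotopy groups; hence the fibre is a $K(C,1)$, so up to homotopy we may take it to be our chosen complex $Z$, with its $\alpha_{t}(C)$ cells in dimension $t$.

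Next I would construct a CW complex $V$ together with a homotopy equivalence $V \to W$ by induction on the skeleta of $Y$. Over the $0$-skeleton take $\alpha_{0}(E)$ disjoint copies of $Z$. For the inductive step, assume a homotopy equivalence $V_{k} \to p^{-1}(Y^{(k)})$ has been built. Each $(k{+}1)$-cell of $Y$ carries a characteristic map $\bar{\sigma} \to Y$ from a closed $(k{+}1)$-ball $\bar{\sigma}$; since $\bar{\sigma}$ is contractible, the pullback of $p$ along this map is fibre-homotopy equivalent to $\bar{\sigma} \times Z$, whose restriction over the attaching sphere $S^{k} = \partial\bar{\sigma}$ is fibre-homotopy equivalent to $S^{k} \times Z$ and comes equipped with a map to $p^{-1}(Y^{(k)}) \simeq V_{k}$. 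Using a cellular approximation of the resulting map $S^{k} \times Z \to V_{k}$, glue a copy of $\bar{\sigma} \times Z$ to $V_{k}$ along $S^{k} \times Z$; by the gluing lemma the extended map $V_{k+1} \to p^{-1}(Y^{(k+1)})$ is again a homotopy equivalence. Thus each $(k{+}1)$-cell of $Y$ contributes, for each $t$-cell of $Z$, exactly one new cell of $V$, of dimension $k{+}1{+}t$. Passing to the (finite) limit yields a finite CW complex $V \simeq W \simeq K(D,1)$ — hence a $K(D,1)$ — whose number of $i$-cells is
\[ \alpha_{i}(D) \;=\; \sum_{j+t=i} \alpha_{j}(E)\,\alpha_{t}(C) \;=\; \sum_{0 \le t \le i} \alpha_{t}(C)\,\alpha_{i-t}(E), \]
as claimed.

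The real content is simply the contractibility of the cells of $Y$, which trivialises $p$ over each of them; the steps that require care are the bookkeeping ones — replacing $f$ by an honest fibration at the outset so that pullbacks along characteristic maps are legitimate, and invoking cellular approximation so that the inductively built $V$ is genuinely a CW complex carrying a homotopy equivalence to $W$. (Incidentally, no separate finiteness hypothesis on $D$ is needed: the construction reproves that $D$ is of type $F$ once $C$ and $E$ are.)
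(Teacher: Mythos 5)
The paper itself gives no proof of this lemma --- it simply cites Wall's 1961 paper, whose argument is algebraic: starting from free $\Z C$- and $\Z E$-resolutions of $\Z$, one builds a free $\Z D$-resolution by a twisted tensor product, and the rank count gives the stated formula (the CW version is then a standard geometric realization). Your proof is a genuinely topological route to the same conclusion: replace $K(D,1)\to K(E,1)$ by a fibration, identify the fibre with $K(C,1)$, and build the total space cell-by-cell over the skeleta of the base, exploiting that a fibration is fibre-homotopy trivial over each (contractible) cell. Both arguments are correct and classical; the algebraic one sidesteps the point-set care needed for fibration replacement, gluing, and cellular approximation, while yours makes the CW cell count visibly a twisted product of the two complexes and, as you note, reproves that $D$ is of type $F$ as a by-product.

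Two places in your write-up deserve a more careful sentence. First, the identification $p^{-1}(Y^{(k+1)}) \cong p^{-1}(Y^{(k)}) \cup_{\Phi^*W|_{S^k}} \Phi^*W$ as a pushout requires using the \emph{actual} pullbacks along the characteristic maps, not merely their fibre-homotopy-trivial models $S^k\times Z$ and $\bar\sigma\times Z$; the trivialization enters only when you compare that pushout with the model pushout via the gluing lemma for homotopy equivalences along cofibrations. Second, cellular approximation of the attaching map $S^k\times Z\to V_k$ only makes the square commute up to homotopy, so one should invoke the version of the gluing lemma that tolerates homotopy-commutative squares (adjusting the attaching map by a homotopy changes the pushout only up to homotopy equivalence). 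Neither issue affects the correctness of the approach or the cell count $\alpha_i(D)=\sum_{t}\alpha_t(C)\alpha_{i-t}(E)$.
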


\begin{proof}[Proof of Corollary \ref{general}] For a group $G$ with a classifying space $K(G,1)$ with $\alpha_i(G)$ $i$-cells the Euler characteristics $\chi(G) = \sum_{i \geq 0} (-1)^i \alpha_i(G)$. Hence $\chi_3(G) = \chi(G) =  \chi(\Gamma) \chi(K) > 0$ and we can apply Theorem \ref{incoherent}. \end{proof}

\begin{proof}[Proof of Proposition \ref{Sigma2}] 
Observe  that each $G_i$ has a finite presentation with $a_i$ generators and $r_i$ relations, where $0 \leq r_i \leq 1$ and Euler characteristic $\chi(G_i) = 1 - a_i + r_i$. Then $\chi(G) = \chi(G_1) \chi(G_2) \not= 0$, hence $\chi(G_i) \not= 0$ for $i = 1,2$. On the other hand $ 1 - a_i + r_i \leq 2 - a_i \leq 0$ since $a_i = 1$ implies $G_i = \mathbb{Z}$ and $\chi(G_i) = 0$ a contradiction, hence $ \chi(G_i) = 1 - a_i + r_i < 0$ for each $i = 1,2$ and $\chi(G) = \chi(G_1) \chi(G_2) > 0$. 

 By Lemma \ref{Lemma CW} there is
a free resolution of the trivial $\mathbb{Z} G$-module $\mathbb{Z}$
$$0 \to \mathbb{Z} G^{k_4} \to  \mathbb{Z} G^{k_3} \to \mathbb{Z} G^{k_2} \to \mathbb{Z} G^{k_1} \to \mathbb{Z} G^{k_0} \to \mathbb{Z} \to 0$$ 
where $k_0 = 1, k_1 = a_1 + a_2, k_2 = r_1 + r_2 + a_1 a_2, k_3 = r_1 a_2 + r_2 a_1, k_4 = r_1 r_2$. 

Suppose that $[\chi] \in \Sigma^2(G, \mathbb{Z})$. Then by \cite{S}, \cite[Appendix]{B} $Tor_i^{\mathbb{Z} G}( \mathbb{Z}, \widehat{\mathbb{Z} G}_{\chi}) = 0$ for $1 \leq i \leq 2$. 
Then as in the proof of Theorem \ref{incoherent}  $k_1 + k_3 \geq k_0 + k_2$. This is eqivalent to
$$0 \leq k_1 + k_3 - k_0 - k_2 = a_1 + a_2 +  r_1 a_2 + r_2 a_1 - 1 - r_1 - r_2 - a_1 a_2 = r_1 r_2 - (1 - a_1 + r_1) ( 1 - a_2 + r_2) =$$ $$ r_1 r_2 - \chi(G_1) \chi(G_2) = r_1 r_2 - \chi(G) <  r_1 r_2 \leq 1.$$
Hence $0 =  r_1 r_2 - \chi(G)$, so $0 < \chi (G) = r_1 r_2 \leq 1$. This implies that $r_1 = r_2 = 1$ i.e. each $G_i$ is a surface group. Note that $0 > \chi(G_i) = 1 - a_i + r_i = 2 - a_i$ and $a_i$ even imply that $\chi(G_i)$ is even for $ i = 1,2$ . But $1 = r_1 r_2 = \chi(G) = \chi(G_1) \chi(G_2)$, a contradiction.

\end{proof}

\subsection{Some examples  $G = K \rtimes \Gamma$ with $K$ two-generated and $\Gamma$ free of finite rank $\geq 2$}

\begin{theorem} \label{excessive} Let $G = K \rtimes \Gamma$ be a short exact sequence of groups with $K$ finitely generated.
Suppose that there is  a normal subgroup $B$ of $K$ such that $A = K/ B \simeq \mathbb{Z}_2$ and for the abelianization $B^{ ab}$ of $B$ we have that $B^{ ab} \otimes_{\mathbb{Z}} \mathbb{Q} \simeq U \oplus \mathbb{Q}^t$ as left $\mathbb{Q} A$-module, where $A$ acts via conjugation, $U = Ker (\mathbb{Q} A \to \mathbb{Q})$ is the augmentation ideal and $\mathbb{Q}^t = \mathbb{Q} \oplus \ldots \oplus \mathbb{Q}$ with $t \geq 0$  direct summands and each $\mathbb{Q}$ a trivial $\mathbb{Q} A$-module. Then there is a subgroup $G_0$ of finite index in $G$ and a subgroup $\Gamma_0$ of $\Gamma$ such that $G_0 = (K \cap G_0) \rtimes \Gamma_0$ and the short exact sequence $ 1 \to K \cap G_0 \to G_0 \to \Gamma_0 \to 1$ has excessive homology.
\end{theorem}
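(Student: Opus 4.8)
The plan is to pass to a finite-index subgroup of $G$ on which $B$ becomes normal, then analyse the conjugation action on $B^{\mathrm{ab}}\otimes_{\mathbb{Z}}\mathbb{Q}$ using the semisimplicity of $\mathbb{Q}A$, and finally shrink $\Gamma$ once more so that the action on the sign-isotypic line is trivial. Throughout, recall that for a split extension $1\to L\to H\rtimes Q\to Q\to1$ the five-term homology sequence degenerates to give $\operatorname{Ker}\big(H_1(H\rtimes Q;\mathbb{R})\to H_1(Q;\mathbb{R})\big)\cong (H^{\mathrm{ab}}\otimes\mathbb{R})_{Q}$, so ``excessive homology'' for such a sequence means precisely that the $\mathbb{R}$-coinvariants of $H^{\mathrm{ab}}$ under $Q$ are nonzero.

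\emph{Making $B$ normal.} Since $K$ is finitely generated, $\operatorname{Hom}(K,\mathbb{Z}_2)=\operatorname{Hom}(K^{\mathrm{ab}},\mathbb{Z}_2)$ is finite, and $\Gamma$ permutes this finite set through its conjugation action on $K$; hence the stabiliser $\Gamma_1=\{\gamma\in\Gamma:\gamma(B)=B\}$ of the epimorphism $K\twoheadrightarrow A$ with kernel $B$ has finite index in $\Gamma$. Put $G_1=K\rtimes\Gamma_1\le G$. Then $B\trianglelefteq G_1$ (it is normal in $K$ and $\Gamma_1$-invariant), and since $\operatorname{Aut}(\mathbb{Z}_2)$ is trivial, $G_1/B\cong A\times\Gamma_1$ is a \emph{direct} product.

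\emph{Commuting actions and the isotypic splitting.} The conjugation action of $K$ on $B$ induces an action on $B^{\mathrm{ab}}$ that factors through $A=K/B$, and likewise $G_1$ acts on $B^{\mathrm{ab}}$ through $G_1/B=A\times\Gamma_1$; therefore the images of $A$ and of $\Gamma_1$ commute as operators, so $M:=B^{\mathrm{ab}}\otimes_{\mathbb{Z}}\mathbb{Q}$ is a $\mathbb{Q}[A\times\Gamma_1]$-module. As $\mathbb{Q}A\cong\mathbb{Q}\times\mathbb{Q}$ is semisimple, there is a $\Gamma_1$-equivariant decomposition $M=V\oplus W$, where the generator $\sigma$ of $A$ acts by $-1$ on $V$ and trivially on $W$. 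By hypothesis $M\cong U\oplus\mathbb{Q}^{t}$ as a $\mathbb{Q}A$-module with $U$ the one-dimensional sign representation, so $\dim_{\mathbb{Q}}V=1$. Next, the lattice $L=B^{\mathrm{ab}}/\mathrm{tors}$ is a finitely generated free abelian group carrying commuting $A$- and $\Gamma_1$-actions, and $L^{-}:=\{l\in L:\sigma l=-l\}$ satisfies $L^{-}\otimes\mathbb{Q}=V$ because $L$ is torsion-free; hence $L^{-}\cong\mathbb{Z}$ and $\Gamma_1$ acts on it, and therefore on $V$, through $\operatorname{Aut}(\mathbb{Z})=\{\pm1\}$. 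Let $\Gamma_0=\operatorname{Ker}(\Gamma_1\to\{\pm1\})$, a subgroup of index at most $2$ in $\Gamma_1$ acting trivially on $V$.

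\emph{Conclusion.} Set $G_0=B\rtimes\Gamma_0\le G$: this is a genuine semidirect product ($\Gamma_0\le\Gamma_1$ normalises $B$ and $B\cap\Gamma_0=1$), it has finite index $2\,[\Gamma:\Gamma_0]$ in $G$, and $K\cap G_0=B$. For the split extension $1\to B\to G_0\to\Gamma_0\to1$ the remark above gives $\operatorname{Ker}\big(H_1(G_0;\mathbb{R})\to H_1(\Gamma_0;\mathbb{R})\big)\cong (B^{\mathrm{ab}}\otimes\mathbb{R})_{\Gamma_0}$; applying the $\Gamma_0$-equivariant splitting $M=V\oplus W$ and that $\Gamma_0$ acts trivially on $V$, this space contains $V_{\Gamma_0}=V\ne 0$, so the sequence has excessive homology, which is what we want. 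The main obstacle is the penultimate point: a priori the $\Gamma_1$-representation on the sign line $V$ is an arbitrary character $\Gamma_1\to\mathbb{Q}^{\times}$, and it is only the integrality constraint — that $\Gamma_1$ preserves the lattice $B^{\mathrm{ab}}$, equivalently $L^{-}\cong\mathbb{Z}$ — that forces this character into $\{\pm1\}$ and hence makes it killable on a finite-index subgroup; the observation that the $A$- and $\Gamma_1$-actions commute (resting on $\operatorname{Aut}(\mathbb{Z}_2)=1$) and the bookkeeping needed to keep $G_0$ an honest semidirect product inside $G$ are the other points requiring care.
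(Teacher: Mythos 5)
Your proof is correct and follows essentially the same route as the paper: pass to a finite-index $\Gamma_1$ stabilising $B$, use the commuting $A$- and $\Gamma_1$-actions on $B^{\mathrm{ab}}\otimes\mathbb{Q}$ together with the hypothesis to isolate a one-dimensional sign line, identify it with a $\Gamma_1$-invariant rank-one sublattice of $B^{\mathrm{ab}}/\mathrm{tors}$, and kill the resulting $\{\pm1\}$-action on a further index-$\le 2$ subgroup $\Gamma_0$. Your version is marginally tidier in two spots: you observe that $\operatorname{Aut}(\mathbb{Z}_2)=1$ makes the action on $A$ automatically trivial (the paper passes to a redundant $\Gamma_2\le\Gamma_1$), and you take $G_0=B\rtimes\Gamma_0$ directly rather than replacing $B$ by the preimage $C$ of $V_1\oplus V_2$ in $B$ as the paper does — both choices work, since excessive homology is a rational/real condition and is detected by the coinvariants $(B^{\mathrm{ab}}\otimes\mathbb{R})_{\Gamma_0}$, which contain the trivially-acted-on line $V$.
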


\begin{proof} Since $A$ is a finite group and $K$ is finitely generated and normal in $G$, there is a subgroup $\Gamma_1$ of finite index in $\Gamma$ such that $g B g^{ -1}  = B$ for $g \in \Gamma_1$. This induces a $\Gamma_1$-action via conjugation on $A =K/ B$ and there is a subgroup $\Gamma_2$ of finite index in $\Gamma_1$ such that  $\Gamma_2$ acts trivially on $A$ i.e. $[K, \Gamma_2] \subseteq B$.

Let $a$ be a generator of $A$. By the isomorphism
 $ U \oplus \mathbb{Q}^t \simeq B^{ab} \otimes_{\mathbb{Z}} \mathbb{Q}$  of $\mathbb{Q} A$-module we have that  $U$ is isomorphic to the $\mathbb{Q}$-vector subspace of $ B^{ab} \otimes_{\mathbb{Z}} \mathbb{Q}$ generated by all $e$-vectors with $e$-value $-1$ with respect to the linear operator that is given by conjugation with $a$. And $\mathbb{Q}^t$
is isomorphic to the $\mathbb{Q}$-vector subspace of $ B^{ab} \otimes_{\mathbb{Z}} \mathbb{Q}$ generated by all $e$-vectors with $e$-value $1$ with respect to the linear operator that is given by conjugation with $a$.

 Set
$V_{1} = \{ v \in B^{ ab} / Tor \ | \ a \circ v = - v \} \hbox{ and } V_{2} = \{ v \in B^{ ab} /Tor \ | \ a \circ v =  v \},$
where $\circ$ denotes conjugation action and $Tor$ is the torsion part of the finitely generated abelian group $B^{ab}$. Then $V_1 \otimes_{\mathbb{Z}} \mathbb{Q} \simeq U$ is 1-dimensional over $\mathbb{Q}$ and $V = V_1 \oplus V_2$ is a subgroup of finite index in $B^{ab}/ Tor$.  Note that since $[K, \Gamma_2] \subseteq B$ we have that for every $g \in \Gamma_2$ the actions of $a$ and $g$ on $B^{ ab}$ ( induced by conjugation) commute.
 This implies that  $V_{1}$ and $V_{2}$ are $\Gamma_2$-invariant and $V_1 \simeq \mathbb{Z}$.
Then there is a subgroup $\Gamma_0$ of $\Gamma_2$ of index  2 such that $\Gamma_0$ acts trivially ( via conjugation) on $V_1$.
Finally we define $C$ as the preimage of $V_1 \oplus V_2$ in $B$ and $G_0 = C \rtimes \Gamma_0$. Thus $C$ has finite index in $K$ and the image of $V_1$ in the abelianization of $G_0$ is infinite cyclic i.e. $ 1 \to C \to G_0 \to \Gamma_0 \to 1$ has excessive homology.
\end{proof}

\begin{proposition}
Let $F$ be a free group with a free basis $a_1, \ldots, a_{m}$ and $B = Ker(\varphi)$, where $\varphi : F \to A = \mathbb{Z}_2^s$ is an epimorphism for some $1 \leq s \leq m$.
Then $B^{ ab} \otimes_{\mathbb{Z}} \mathbb{Q} \simeq  U^{m-1} \oplus \mathbb{Q}^m \simeq (\mathbb{Q} A)^{m-1} \oplus \mathbb{Q}$ as left $\mathbb{Q} A$-module via conjugation, where $U$ is the augmentation ideal $Ker(\mathbb{Q} A \to \mathbb{Q})$.
\end{proposition}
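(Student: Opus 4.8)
The plan is to realize $B^{ab}\otimes_{\mathbb{Z}}\mathbb{Q}$ as the first rational homology of a finite graph carrying a free action of $A$, and then to exploit semisimplicity of $\mathbb{Q}A$. Let $X=\bigvee_{i=1}^{m}S^{1}$, so that $\pi_{1}(X)\cong F$ with the basis elements $a_{i}$ corresponding to the $m$ edges, and let $p\colon\widetilde{X}\to X$ be the regular covering associated to the normal subgroup $B$; its deck group is $A=F/B$, of order $q:=2^{s}$. Then $\widetilde{X}$ is a connected finite graph with $q$ vertices and $qm$ edges, the $A$-action permutes cells freely, and since $\widetilde{X}$ is aspherical we have $H_{1}(\widetilde{X};\mathbb{Q})\cong B^{ab}\otimes_{\mathbb{Z}}\mathbb{Q}$ as $\mathbb{Q}A$-modules, where $A$ acts on the right-hand side by conjugation. (As a sanity check, Nielsen--Schreier gives $\operatorname{rk}B=q(m-1)+1$, matching $\dim_{\mathbb{Q}}\!\big((\mathbb{Q}A)^{m-1}\oplus\mathbb{Q}\big)$.)

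Next I would read off the cellular chain complex of $\widetilde{X}$ with $\mathbb{Q}$-coefficients: it has the form $C_{1}\xrightarrow{\ \partial\ }C_{0}$ with $C_{1}\cong(\mathbb{Q}A)^{m}$ (one free generator per edge orbit) and $C_{0}\cong\mathbb{Q}A$ (one free generator per vertex orbit, the action being simply transitive on vertices). Since $\widetilde{X}$ is connected, $H_{0}(\widetilde{X};\mathbb{Q})=\mathbb{Q}$ and, choosing the base vertex as the identity of $A$ in the identification $C_{0}\cong\mathbb{Q}A$, the map $C_{0}\to H_{0}$ is exactly the ring augmentation $\varepsilon$. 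Writing $I=\operatorname{im}\partial$ we obtain two short exact sequences of $\mathbb{Q}A$-modules
$$0\to I\to \mathbb{Q}A\xrightarrow{\ \varepsilon\ }\mathbb{Q}\to 0,\qquad 0\to B^{ab}\otimes_{\mathbb{Z}}\mathbb{Q}\to(\mathbb{Q}A)^{m}\to I\to 0,$$
and the first one identifies $I$ with the augmentation ideal $U$.

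By Maschke's theorem $\mathbb{Q}A$ is semisimple, so both sequences split: the first gives $\mathbb{Q}A\cong U\oplus\mathbb{Q}$, and the second gives $B^{ab}\otimes_{\mathbb{Z}}\mathbb{Q}\ \oplus\ U\cong(\mathbb{Q}A)^{m}\cong U^{m}\oplus\mathbb{Q}^{m}$. Cancelling the common summand $U$ (valid by Krull--Schmidt for finite-length semisimple modules; concretely, since $A$ is elementary abelian of exponent $2$ we have $\mathbb{Q}A\cong\mathbb{Q}^{q}$ and $U\cong\bigoplus_{1\neq\psi\colon A\to\{\pm1\}}\mathbb{Q}_{\psi}$, and one just compares multiplicities of each simple module) yields $B^{ab}\otimes_{\mathbb{Z}}\mathbb{Q}\cong U^{m-1}\oplus\mathbb{Q}^{m}$. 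Regrouping $U^{m-1}\oplus\mathbb{Q}^{m}=(U\oplus\mathbb{Q})^{m-1}\oplus\mathbb{Q}\cong(\mathbb{Q}A)^{m-1}\oplus\mathbb{Q}$ then gives the stated isomorphism.

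I do not expect a genuine obstacle here; the only points needing care are (i) verifying that the terminal map of the four-term complex is the augmentation, so that $I=U$ and not some other codimension-one submodule, and (ii) the cancellation step, which is immediate over the semisimple ring $\mathbb{Q}A$ but would be false integrally (the integral statement would require Gasch\"utz-type input). One can dispense with topology altogether by instead invoking the Crowell/Fox free-differential exact sequence $0\to B^{ab}\to(\mathbb{Z}A)^{m}\to\mathbb{Z}A\to\mathbb{Z}\to 0$ attached to $1\to B\to F\to A\to 1$, tensoring with $\mathbb{Q}$, and running the same splitting argument; the graph model is merely a concrete incarnation of that sequence, and it also makes transparent that the argument in fact works for any finite quotient $A$ of $F$.
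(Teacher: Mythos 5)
Your proof is correct and, at its core, follows the same strategy as the paper's: both derive the four\mbox{-}term exact sequence
$0 \to B^{ab}\otimes\mathbb{Q} \to (\mathbb{Q}A)^m \to \mathbb{Q}A \to \mathbb{Q} \to 0$, split it using projectivity of the augmentation ideal $U$ over the semisimple ring $\mathbb{Q}A$, and then cancel a common $U$ summand. The paper obtains the exact sequence by citing the relation-module resolution from Brown's book (\cite[Ch.~2, Prop.~5.4]{Brownbook}), whereas you realize it as the cellular chain complex of the regular $A$-cover of a wedge of circles; as you note yourself, these are two presentations of the same Crowell--Fox sequence, so the difference is expository rather than structural. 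One genuine gain in your write-up: the cancellation step is phrased via Krull--Schmidt for modules over the semisimple ring $\mathbb{Q}A$, which works for an arbitrary finite quotient $A$ of $F$, whereas the paper's concrete justification (all irreducibles are $1$-dimensional because $A$ has exponent $2$) is tailored to the elementary abelian case. Your remark about the integral statement failing without Gasch\"utz-type input is also apt and highlights why tensoring with $\mathbb{Q}$ is essential.
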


\begin{proof}
Note that $B/ B'$ is a relation module of $A$. Then by Brown's book \cite[Ch. 2, Prop. 5.4]{Brownbook} there is an exact complex of $\mathbb{Z} A$-modules
$$0 \to B/ B'\to (\mathbb{Z} A)^{ m} \to \mathbb{Z} A \to \mathbb{Z} \to 0$$
Then since $\otimes_{\mathbb{Z}} \mathbb{Q}$ is an exact functor we get an exact complex of $\mathbb{Q} A$-modules
$$0 \to B/ B'\otimes_{\mathbb{Z}} \mathbb{Q} \to (\mathbb{Q} A)^{m} \to \mathbb{Q} A \to \mathbb{Q} \to 0$$
Let $U = Ker (\mathbb{Q} A \to \mathbb{Q})$ be the augmentation ideal. Then we have 
an exact complex of $\mathbb{Q} A$-modules
$$0 \to B/ B'\otimes_{\mathbb{Z}} \mathbb{Q} \to (\mathbb{Q} A)^{ m} \to U \to 0$$
Since $U \oplus \mathbb{Q} = \mathbb{Q} A$, $U$ is a projective $\mathbb{Q} A$-module. Thus
\begin{equation} \label{eq1} U^m \oplus \mathbb{Q}^m \simeq (\mathbb{Q} A)^{m} \simeq U \oplus (B/ B'\otimes_{\mathbb{Z}} \mathbb{Q}) \end{equation}

Since $A$ has exponent 2 and the roots of $x^2 = 1$ are in $\mathbb{Q}$ the standard representation theory of $A$ over $\mathbb{C}$ actually works over $\mathbb{Q}$ i.e.  every finitely generated $\mathbb{Q} A$-module $V$ is a direct sum of irreducible $\mathbb{Q} A$-modules, each of dimension 1  over $\mathbb{Q}$ and the multiplicity of each irreducible factor in such direct sum decomposition of $V$ uniquely depends on $V$. This implies that if $V_1, V_2, V_3$ are finitely generated $\mathbb{Q} A$-modules with $V_1 \oplus V_2 \simeq V_1 \oplus V_3$ then $V_2 \simeq V_3$. Thus  canceling one factor $U$ in Equation (\ref{eq1}) we get that
$ U^{m-1} \oplus \mathbb{Q}^m  \simeq B/ B'\otimes_{\mathbb{Z}} \mathbb{Q}$.
\end{proof}

\begin{corollary} \label{new}
Let $G = K \rtimes \Gamma$ be a short exact sequence of groups with $K$ 2-generated such that $K$ has a non abelian quotient $T = \mathbb{Z} \rtimes \mathbb{Z}_2$. Then there is a subgroup $G_0$ of finite index in $G$ such that the short exact sequence $ 1 \to B = K \cap G_0 \to G_0 \to \Gamma_0 \to 1$ has excessive homology.
\end{corollary}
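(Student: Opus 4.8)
The plan is to verify the hypotheses of Theorem \ref{excessive} for a suitably chosen index-two normal subgroup $B$ of $K$; the only real work is to pin down the $\mathbb{Q}A$-module structure of $B^{ab}\otimes_{\mathbb{Z}}\mathbb{Q}$, and then Theorem \ref{excessive} does the rest.

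First I would construct $B$. Let $\rho\colon K\twoheadrightarrow T=\mathbb{Z}\rtimes\mathbb{Z}_2$ be the given epimorphism onto the (necessarily infinite dihedral) group $T$, let $Z\cong\mathbb{Z}$ be its translation subgroup, and let $\sigma\colon T\twoheadrightarrow T/Z\cong\mathbb{Z}_2$ be the quotient. Put $\varphi=\sigma\rho\colon K\twoheadrightarrow A:=\mathbb{Z}_2$ and $B:=\ker\varphi=\rho^{-1}(Z)$, so that $K/B\cong A$. To get hold of $B^{ab}$ I would exploit that $K$ is $2$-generated: fix an epimorphism $\pi\colon F\twoheadrightarrow K$ from the free group $F$ on two generators, with kernel $R\unlhd F$, and set $\widehat F:=\ker(\varphi\pi)\le F$. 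Then $\widehat F$ has index two in $F$, hence by Nielsen--Schreier is free of rank $3$, it contains $R$, and it is the preimage of $B$; conjugation makes $\widehat F^{ab}$ a $\mathbb{Z}A$-module (with $A=F/\widehat F=K/B$), and by the preceding Proposition (case $m=2$, $s=1$) we obtain $\widehat F^{ab}\otimes_{\mathbb{Z}}\mathbb{Q}\cong U\oplus\mathbb{Q}^{2}$ as $\mathbb{Q}A$-module, where $U=\ker(\mathbb{Q}A\to\mathbb{Q})$ is the sign representation.

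Next, since $B=\widehat F/R$ with $R\unlhd F$, the surjection $\widehat F^{ab}\twoheadrightarrow B^{ab}$ is $A$-equivariant, so $B^{ab}\otimes_{\mathbb{Z}}\mathbb{Q}$ is a quotient $\mathbb{Q}A$-module of $U\oplus\mathbb{Q}^{2}$; as $\mathbb{Q}A$ is semisimple with exactly the two one-dimensional simple modules $\mathbb{Q}$ (trivial) and $U$ (sign), this forces $B^{ab}\otimes_{\mathbb{Z}}\mathbb{Q}\cong U^{b}\oplus\mathbb{Q}^{a}$ with $b\le 1$ and $a\le 2$. For the reverse inequality I would observe that $\rho$ restricts to an epimorphism $B\twoheadrightarrow Z$ (because $B=\rho^{-1}(Z)$), which is $A$-equivariant for the conjugation actions once we identify $A\cong T/Z$; the nontrivial element of $T/Z$ acts on $Z$ by inversion (since $T$ is nonabelian), so $Z\otimes_{\mathbb{Z}}\mathbb{Q}\cong U$, and hence $U$ is a quotient $\mathbb{Q}A$-module of $B^{ab}\otimes_{\mathbb{Z}}\mathbb{Q}$, giving $b\ge 1$. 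Therefore $b=1$ and $B^{ab}\otimes_{\mathbb{Z}}\mathbb{Q}\cong U\oplus\mathbb{Q}^{t}$ for some $t\ge 0$.

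Finally, $B\unlhd K$ with $A=K/B\cong\mathbb{Z}_2$ and $B^{ab}\otimes_{\mathbb{Z}}\mathbb{Q}\cong U\oplus\mathbb{Q}^{t}$, so all hypotheses of Theorem \ref{excessive} hold for $G=K\rtimes\Gamma$; that theorem produces a finite-index subgroup $G_0\le G$ and $\Gamma_0\le\Gamma$ with $G_0=(K\cap G_0)\rtimes\Gamma_0$ such that $1\to K\cap G_0\to G_0\to\Gamma_0\to 1$ has excessive homology, which is exactly the claim (with $K\cap G_0$ renamed $B$ in the statement of the Corollary). The main obstacle is the middle step: forcing the multiplicity of $U$ in $B^{ab}\otimes_{\mathbb{Z}}\mathbb{Q}$ to be \emph{exactly} one, by combining the upper bound coming from $2$-generation (via the preceding Proposition and Nielsen--Schreier) with the lower bound coming from the nonabelian quotient $T$, while keeping careful track of $A$-equivariance of all the maps involved.
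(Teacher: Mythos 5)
Your proof is correct and follows essentially the same route as the paper's. The paper's own proof defines $B_0 = \ker(\rho)$ (your $\rho^{-1}(1)$) and $B$ the preimage of $Z$, observes that $B/B' \otimes \mathbb{Q}$ is a quotient of $U \oplus \mathbb{Q}^2$ by the preceding Proposition applied to a rank-two free cover, and uses the surjection $B/B' \to B/B_0 \cong \mathbb{Z}$ with the inversion action (forced by nonabelianness of $T$) to pin down exactly one copy of $U$ — precisely your upper and lower bounds on $b$. You are simply more explicit about the Nielsen--Schreier step and the $A$-equivariance of $\widehat F^{ab} \twoheadrightarrow B^{ab}$, which the paper leaves implicit; otherwise the arguments coincide.
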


\begin{proof}
Let $K/ B_0 \simeq \mathbb{Z} \rtimes \mathbb{Z}_2$ be non-abelian. Then we have a sequence of subgroups
$$B'\subseteq B_0 \subseteq B \subseteq K$$
such that $A : = K/ B \simeq \mathbb{Z}_2$, $B / B_0 \simeq \mathbb{Z}$.  By the previous result $B/ B'\otimes \mathbb{Q}$ as a $\mathbb{Q} A$-module is a quotient of  $\mathbb{Q} A \oplus \mathbb{Q} = U \oplus \mathbb{Q}^2 $. By construction $B/ B'\otimes \mathbb{Q}$ maps surjectively to  $B/ B_0\otimes \mathbb{Q} = U = Ker (\mathbb{Q} A \to \mathbb{Q})$.
Hence  $B/ B'\otimes \mathbb{Q} \simeq U \oplus \mathbb{Q}^t$ for some $ 0 \leq t \leq 2$.
Then we can apply Theorem \ref{excessive}.
\end{proof}

\begin{proof}[Proof of Corollary \ref{gen}] It follows directly from  Corollary \ref{new} and the homological version of Corollary \ref{cor1}. \end{proof}

\begin{examples} The case $K$ free of rank 2 is one obvious case when the conditions of Corollary \ref{gen} hold.
 There are other examples of Corollary \ref{gen} like $K_1 = \langle x,y \ | \ (xy)^2 = 1 \rangle$ and $K_2 = \langle x,y \ | \ xyx^2 y x y^2 = 1 \rangle$ that satisfy Corollary \ref{gen} and are not free.  The second example is torsion-free and the first has torsion. Note that $K_1 = \langle x, z \ | \ z^2 = 1 \rangle = \langle x, \ ^zx \rangle \rtimes \langle z \ | \ z^2 = 1 \rangle$, where $T = \langle x, \ ^zx \rangle $ is a free group of rank 2. If $H$ is a normal subgroup of $K_1$ that is finitely generated and  $K_1/ H \simeq \mathbb{Z}$ then $T \cap H$ is a finitely generated, normal subgroup of infinite index in the free group $T$,  a contradiction since $T$ does not algebraically fiber. 

Alternatively the non-fibering part for a 1-relator group can be checked by the Brown's criterion \cite[Theorem 4.2]{Brown2}.  We use this approach for $K_2$. Indeed assume that  $H$ is a normal subgroup of $K_2$ that is finitely generated and  $K_2/ H \simeq \mathbb{Z}$. Thus $H = Ker (\chi)$, where $\chi : K_2 \to \mathbb{Z}$ with $\chi(x) = a, \chi(y) = b$. Then $0 =\chi(xyx^2 y x y^2 ) = 4a + 4b$, so $b = -a \not= 0$. By the Brown's criterion in the sequence $(\chi(s) \ | \ s = x_1 \ldots x_i\hbox{ for }0 \leq i \leq 7, x_1 \ldots x_8 = xyx^2 y x y^2) $ both the minimal and maximum are attained exactly once. Note that  the sequence $(\chi(s))$ is $0,a, a+ b = 0, 2a + b = a, 3a + b = 2a, 3a + 2b = a, 4a + 2b = 2a, 4a + 3b = a$ and both the minimum and the maximum are attained more than once. \end{examples} 

\section{Applications for poly-free and poly-surface groups} \label{pfs}

In this section we will apply Theorem \ref{thm:poly1}  to study higher finiteness properties of algebraic fibrations of  poly-free and poly-surface groups. These classes of groups, that have been extensively studied both in algebraic and geometric setting, generalize the class of surface-by-surface groups, that was the main motivation of \cite{F-V}. In what follows, the term {\em surface group of genus $g$} will refer to the fundamental group of a connected, compact, closed, orientable surface of genus $g > 0$. Nonabelian free and surface groups share an important property, the so-called f.g.n. (finitely generated normal) property, namely all their non-trivial finitely generated normal subgroups are finite-index, see \cite{KS73}. As a consequence, their first BNS invariant is empty.

\begin{definition} A {\em poly-free} (respectively {\em poly-surface}) {\em group} $G$ of length $m$ is a group that admits a subnormal filtration $1  = G_0 \unlhd G_1 \unlhd \ldots \unlhd G_m = G$ such that for each $i = 1,\ldots,m$ the {\rm factor group} $\Gamma_i := G_{i}/G_{i-1}$  is a finitely-generated free (respectively surface) group. In particular it admits a short exact sequence of type $1 \to K \to G \to \Gamma \to 1$ with $K$ a poly-free (respectively poly-surface) group of length $m-1$ and $\Gamma$ a free (respectively surface) group.
\end{definition} 

Direct products of free (respectively surface) groups belong to these classes. The fundamental group of a surface bundle over a surface is a poly-surface group of length $2$ (and nontrivially the converse holds true), and iterations of such geometric construction afford examples of higher-length poly-surface groups. In the poly-free case the sequence $1 \to G_{i-1} \to G_{i} \to G_{i}/G_{i-1} \to 1$ always splits, but it may fail to do so in the poly-surface case. Whenever the sequence splits, we can write $G_{i}$ as semidirect product $G_{i} = G_{i-1} \rtimes_{\rho} (G_{i}/G_{i-1})$ determined by a representation $\rho \colon G_{i}/G_{i-1} \to \operatorname{Aut}(G_{i-1})$. 

By their very definition, poly-free and poly-surface groups admit a subnormal filtration whose factor groups are finitely presented $FP$ groups, hence by finite induction they are themselves finitely presented and $FP$ (see e.g. \cite[Section VIII.6]{Brownbook}), hence $F_{\infty}$. Furthermore, in the poly-surface case, as the factor groups are $PD(2)$ groups, they are $PD(2m)$ groups, see \cite[Theorem 9.10]{Bieribook}.  As $FP$ groups the Euler characteristic of  a poly-free or poly-surface group $G$
 \[ \chi(G) = \sum_{p} (-1)^{p} \mbox{rk}_{\Z}(H_p(G);\Z) \] is well-defined (\cite[Section IX.6]{Brownbook}). It can be computed inductively from its subnormal filtration, using multiplicativity  
under $FP$-extensions  (\cite[Proposition 7.3(d)]{Brownbook}) to get \[ \chi(G) = \prod_{i=1}^{m} \chi(G_{i}/G_{i-1}). \] Observe that $\chi(G)$ is zero if and only if one of the factor groups is abelian (i.e. $\Z$ or $\Z^2$ for the free and surface group respectively). 

 The examples of poly-free and poly-surface groups we will consider admit a normal filtration. (Such groups have been often referred to as {\em strongly} poly-free and poly-surface.) Note that such condition is not exceptional: in fact, whenever the Euler characteristic of a poly-free or poly-surface group $G$ is nonzero, one may assume that there is a finite index subgroup of $G$ whose filtration is actually normal, see \cite[Corollary 4]{Me84} and \cite[Theorem 9]{J79}.

 Before stating the main results of this section, we want to remark that there is a characterization of a poly-free (or poly-surface) group admitting a normal filtration, as iterated extensions of its factor groups. This observation will facilitate the application of Theorem \ref{thm:poly1} for normal filtrations.

\begin{remark} \label{rem}
1) Given a poly-free or poly-surface with normal filtration $1  = G_0 \unlhd G_1 \unlhd \ldots \unlhd G_m = G$, it is convenient to define the groups $\Pi(m-j) := G_{m}/G_j, \,\ 0 \leq j \leq m$. In particular, $\Pi(m) = G$ and $\Pi(0) = 1$; it is obvious from this definition that $G_j= Ker(\Pi(m) \to \Pi(m-j))$. Furthermore, $\Pi(m-j)$ admits a normal filtration $1  = H_0 \unlhd H_1 \unlhd \ldots \unlhd H_{m-j} = \Pi(m-j)$, where the $H_i := Ker(\Pi(m-j) \to \Pi(m-j-i))$ satisfies the relation $H_{i} = G_{j+i}/G_{j}, \,\ 0 \leq  i \leq m-j$, as follows from the commutative diagram
\[ \xymatrix{
G_j \ar[d]_{\cong} \ar@{^{(}->}[r] &
G_{j+i} \ar@{^{(}->}[d] \ar@{->>}[r]  & H_i \ar@{^{(}->}[d] \\
G_j \ar@{^{(}->}[r] &
\Pi(m) \ar@{->>}[d]\ar@{->>}[r] & \Pi(m-j) \ar@{->>}[d] \\
 & \Pi(m-j-i) \ar[r]^{\cong}   &
\Pi(m-j-i)  } \]
so the $i$-th factor $H_{i}/H_{i-1}$ equal to $\Gamma_{j+i} =G_{j+i}/G_{j+i-1}$. It follows that $\Pi(m-j)$ is itself a poly-free or poly-surface group of length $m-j$, and each of these groups is related with its predecessor via the sequence \begin{equation} \label{eq:iter} 1 \to \Gamma_{j+1} \to \Pi(m-j) \to \Pi(m-j-1) \to 1 \end{equation} for $0 \leq j \leq m-1$, as $\Gamma_{j+1}  = G_{j+1}/G_{j}$. Namely, $\Pi(m)$ can be defined inductively from the trivial group $\Pi(0)$ via extensions whose fibers are the factor groups of its normal series. 

2) Conversely, assume we are given a a collection of groups $\Pi(m-j), \,\ 0 \leq  j \leq m$, with $\Pi(0) = 1$ and the remaining groups $\Pi(m-j), \,\ m-1 \geq j \geq 0$ defined inductively by a sequence as in Equation (\ref{eq:iter}),  where the groups $\Gamma_{j+1}$ are f.g. free (or surface) groups; then each group $\Pi(m-j)$ is a poly-free or poly-surface group of length $(m-j)$ with normal filtration given by $H_i = Ker(\Pi(m-j) \to \Pi(m-j-i))$ for $0 \leq i \leq m-j$ and $i$-th factor equal to $\Gamma_{j+i}$, as determined by the commutative diagram
\[ \xymatrix{
H_{i-1} \ar[d]_{\cong} \ar@{^{(}->}[r] &
H_i \ar@{^{(}->}[d] \ar@{->>}[r] & \Gamma_{j+i} \ar@{^{(}->}[d] \\
H_{i-1} \ar@{^{(}->}[r] &
\Pi(m-j) \ar@{->>}[d]\ar@{->>}[r] & \Pi(m-j-i+1) \ar@{->>}[d] \\
 & \Pi(m-j-i) \ar[r]^{\cong}   &
\Pi(m-j-i) } \]
Denoting as above $1  = G_0 \unlhd G_1 \unlhd \ldots \unlhd G_m = \Pi(m)$ the normal filtration for the case $q=0$, this recasts the isomorphism $\Pi(m-i) \cong G_{m}/G_i, \,\ 0\leq i \leq m$, so the notation is consistent.  
\item The advantage of this reformulation is that the assumption  a') of Theorem \ref{thm:poly1} can be rephrased in terms of excessive homology of the sequences $1 \to \Gamma_{j+1} \to \Pi(m-j) \to \Pi(m-j-1) \to 1$ for $0 \leq j \leq m-1$. In particular, if we describe $\Pi(m)$ inductively, we need to  to verify at each step the existence of excessive homology only for the single sequence defining the group $\Pi(m-j)$.  
\end{remark}

\begin{proof}[Proof of Theorem \ref{thm:poly}]
The assumption that  $H_1(G;\Z) \cong \oplus_{i=0}^{m-1}H_1(G_{i+1}/G_i;\Z)$ 
 entails (and is in fact equivalent to) the fact that for all $0 \leq j \leq m-1$ the short exact sequence $1 \to G_{j} \to G_{j+1} \to G_{j+1}/G_{j} \to 1$ 
induces an isomorphism $H_{1}(G_{j+1};\Z) \cong H_{1}(G_{j};\Z) \oplus H_{1}(G_{j+1}/G_{j};\Z)$; in particular this 
implies that every discrete character $G_j \to \mathbb{R}$ extends to a discrete character   $G_{j+1} \to \mathbb{R}$ for every $ 1 \leq j \leq m-1$, and that there is a discrete character $G_{j+1} \to \mathbb{R}$ whose restriction to $G_j$ is zero, and that can be further  extended to a discrete character $\widetilde{\alpha}_j : G \to \mathbb{R}$. Then we can apply Theorem \ref{thm:poly1}  for $s = 0$ and an arbitrary discrete character $\phi : G_1 \to \mathbb{R}$ to obtain a discrete character $\psi : G \to \mathbb{R}$ that extends $\phi$ and  $M = Ker(\psi) $ is of type $F_{m-1}$. Note that when the filtration is normal, the assumption $H_1(G;\Z) \cong \oplus_{i=0}^{m-1}H_1(G_{i+1}/G_i;\Z)$ is equivalent also to the fact that for all $0 \leq j \leq m-1$, in the short exact sequence \[ 1 \to \Gamma_{j+1} \to \Pi(m-j) \to \Pi(m-j-1) \to 1, \] the action of the base groups $\Pi(m-j-1)$ on the homology of the fibers $H_{1}(\Gamma_{j+1};\Z)$ is trivial; namely the monodromy representations $\Pi(m-j-1) \to Out(\Gamma_{j+1})$ have values in the corresponding Torelli subgroup. 

 In order to show that, whenever $\chi(G) \neq 0$, $M$ is not $FP_m$, we will use an Euler characteristic argument, with slight differences in the free and the surface cases. For the poly-free case, the very definition of the group $G$ shows its cohomological dimension equal its length $\operatorname{cd}(G) = m$, as it is an iterated extension of $FP$ groups of cohomological dimension $1$ (see \cite[Remarks, pg. 72]{Bieribook}). It follows that the subgroup $M \leq G$ must satisfy $\operatorname{cd}(M) \leq \operatorname{cd}(G) = m$ (see \cite[Proposition 5.9]{Bieribook}. If  $M$ were $FP_m$ it would be $FP$, hence each term of the short exact sequence \begin{equation} \label{algfib} 1 \longrightarrow M \longrightarrow G \stackrel{\psi}{\longrightarrow} \Z \longrightarrow 1 \end{equation} would have a well-defined Euler characteristic.  Using multiplicativity of the Euler characteristic we would thus get 
$\chi(G) = \chi(M) \cdot \chi(\Z) = 0$. When $G$ is a poly-surface group of length $m$, it has cohomological dimension $2m$, this argument would just yield the weaker result that $M$ is not $FP_{2m-1}$; fortunately there is an off-the-shelf stronger argument that can be applied using the fact that $G$ is a Poincar\'e duality group of dimension $2m$ i.e. a $PD(2m)$ group. For in that case, in the sequence of Equation \ref{algfib},  $M$ is a $PD(2m-1)$ group if and only if it is of type $FP_{m}$  (see \cite[Theorem 1.19]{Hibook}). So if we assume the latter, we have much as above that the Euler characteristic of $M$ is well-defined and we must have $\chi(G) = \chi(M) \cdot \chi(\Z) = 0$. \end{proof}

\begin{remark} 
(1) All groups of nonzero Euler characteristic covered by  Theorem  \ref{thm:poly} fail to be  $n$-coherent and homologically $n$-coherent for all $n \leq m - 1$. For $n < m - 1$, the incoherence is witnessed by subgroups of elements of the subnormal series, that may fail to be normal in $G$ itself.   

(2) Note that in the case where $G$ is the direct product of $m$ surface groups, Theorem \ref{Main1} can be seen as a consequence of Meinert's inequality for BNSR invariants, and Theorem \ref{thm:poly} can be obtained as a particular case of the much broader results on finiteness of subgroups of direct products contained in \cite{BHMS}.
\end{remark}

 We want to apply Theorem \ref{thm:poly} to the study of the finiteness properties of subgroups of two poly-free groups of geometric interest, namely the pure braid group and the pure mapping class group of a punctured sphere. We will primarily discuss normal subgroups with associated quotient groups that  equal to $\Z$, i.e. kernels of discrete characters: their study gives some information on the BNSR invariants of the group. The BNSR invariants of the pure braid group have received already attention: in particular the first invariant is completely determined in \cite{KMcCM}, while (portions of) the higher order invariants has been partially elucidated in \cite{Z2}. Corollary \ref{cor:pbg} recasts results similar to those of \cite{Z2}, using techniques that at times are quite different (at least on the surface); in particular, no use is made of Morse theory.

\begin{proof}[Proof of Corollary \ref{cor:pbg}] We start recalling some basic algebraic results on the groups involved, which can be found e.g. in \cite[Section 9.3]{FM}. The pure braid group on two strands $P(2)$ is isomorphic to $\Z$ and cancellation of one strand from a braid yields a short exact sequence $1 \to F_m \to P(m+1) \to P(m) \to 1$.  Iteration of the cancellation process yields the normal filtration  $1  = G_0 \unlhd G_1 \unlhd \ldots \unlhd G_m = P(m+1)$ of length $m$, where $G_{i} = Ker(P(m+1) \to P(m+1-i)) $ is the kernel of the map obtained by successive removal of $i$ strands, so that the factor groups $G_{i}/G_{i-1}$ are free of rank $m+1-i$. Truncation of the filtration at the penultimate step yields that $P(m+1)$ is an extension \begin{equation} \label{eq:pure} 1 \longrightarrow K(m+1) \longrightarrow P(m+1) \longrightarrow \Z  \longrightarrow 1, \end{equation} where $K(m+1) = G_{m-1}$ is a poly-free group of length $m-1$, that can be identified with the pure mapping class group of a sphere with $m+2$ punctures.  Note that in the case of $K(m+1)$ all factors are free of rank at least $2$, hence its Euler characteristic is nonzero, while the extra factor of $P(m+1)$ is $\Z$, so that its Euler characteristic vanishes. Furthermore, we have an actual direct product decomposition $P(m+1) = K(m+1) \times \Z$, with $\Z = Z(P(m+1))$ being the center of $P(m+1)$ (\cite[Section 9.3]{FM}).

Both groups $P(m+1)$ and $K(m+1)$, with their normal filtrations, fit in the framework described in Remark \ref{rem}, and can be thought of as the result of successive extensions related by the commutative diagrams
\begin{equation} \label{eq:diag} \xymatrix{
F_{m-j} \ar[d]_{\cong} \ar@{^{(}->}[r] &
K(m+1-j) \ar@{^{(}->}[d] \ar@{->>}[r] & K(m-j) \ar@{^{(}->}[d] \\
F_{m-j} \ar@{^{(}->}[r] &
P(m+1-j) \ar@{->>}[d]\ar@{->>}[r] & P(m-j) \ar@{->>}[d] \\
 & \Z \ar[r]^{\cong}   &
\Z } 
\end{equation}
 for $m-2 \geq j \geq 1$. (Rather unfortunately, the indices for the number of strands, punctures, and poly-free length are all offset.)

The first homology group of $P(m+1)$ is $\Z^{{m+1}  \choose 2}$. A quick check verifies that its rank is exactly the sum of the ranks of all factors of the normal series. Therefore the condition $H_1(G;\Z) \cong \oplus_{i=0}^{m-1}H_1(G_{i+1}/G_i;\Z)$ is satisfied, which entails that the assumption of Theorem \ref{thm:poly} are satisfied for $K(m+1)$ as well. 

We can then proceed with the rest of the proof, looking at $K(m+1)$ first, by induction. Observe that for $m=2$ we have $K(3) = F_2$ and all infinite index normal subgroups of $F_2$ are of type $F_0$ but not $FP_1$. Assume that the statement holds true for $K(m)$, and we will prove this for $K(m+1)$. By inductive assumption $K(m)$ contains for all $0 \leq n \leq m-3$ a discrete character $\phi_n \colon K(m) \to \R$ whose kernel $Ker(\phi_n) \unlhd K(m)$ is of type $F_{n}$ but not $FP_{n+1}$. The discrete character $\phi_n \circ \pi \colon K(m+1) \to \R$ (where $\pi \colon K(m+1) \to K(m)$) has kernel that fits in the sequence $1 \to F_m \to  Ker (\phi_n \circ \pi) \to Ker( \phi_n) \to 1$ hence, as $F_m$ is of type $F_{\infty}$, it has the same homotopical and homological finiteness properties as $Ker(\phi_n)$. To complete the statement, we need an algebraic fiber of $K(m+1)$ of type $F_{m-2}$ but not of type $FP_{m-1}$: but this is afforded by the fact that $K(m+1)$ is a poly-free group of length $m-1$ satisfying the assumptions of Theorem \ref{thm:poly} and nonzero Euler characteristic. (Note that this does not completely determine the finiteness properties of the algebraic fibers of $K(m+1)$.)  This completes the proof for $K(m+1)$. 

For the case of $P(m+1) = K(m+1) \times \Z$, each discrete character $\psi \colon K(m+1) \to \R$ determines a discrete character $\xi \colon P(m+1) \to \R$ that evaluates trivially on the $\Z$-factor. The kernel of $\xi$ equals $Ker(\psi) \times \Z$ hence will have the same regularity as $Ker(\psi)$, so that $P(m+1)$ admits for all $0 \leq n \leq m-2$ a discrete character with kernel of type $F_{n}$ but not $FP_{n+1}$. There is however one significant distinction from the case of $K(m+1)$: any discrete character of $P(m+1)$ that evaluates nontrivially on the $\Z$-factor (which is the center of $P(m+1)$) has kernel that is automatically of type $F_{\infty}$: this is not obstructed by the second part of Theorem \ref{thm:poly} because the Euler characteristic of $P(m+1)$ vanishes. Note that, here too, this does not completely determine the finiteness properties of the algebraic fibers of $P(m+1)$.
\end{proof}

\begin{remark} The reader may have noticed that the only properties of the pure braid group that are used are the fact that their monodromy representation induced by the sequences $1 \to \Gamma_{j+1} \to G_{m}/G_{j} \to G_{m}/G_{j+1} \to 1$ for $0 \leq j \leq m-1$ is Torelli, namely the entire homology group $H_1(\Gamma_{j+1};\Z)$ is coinvariant, and the isomorphism $G_{m} = G_{m-1} \times Z(G_{m}) = G_{m-1} \times \Z$. It follows that the conclusions of Corollary \ref{cor:pbg} apply verbatim for any group whose normal series satisfies those properties. An example of that kind is the upper McCool group  $P\Sigma^{+}_{m+1}$, a suitable subgroup of $Aut(F_{m+1})$, similar but distinct from $P(m+1)$; the properties required here can be found in \cite{CP,SW}. \end{remark}

Next, we will apply Theorem \ref{thm:poly1} to study finiteness properties of algebraic fibers of poly-surface groups that arise as fundamental groups of complex projective varieties.

\begin{proof}[Proof of Corollary \ref{cor:itko}] The variety $X(m)$ can be taken to be the result of iterations of the Atiyah-Kodaira construction of poly-surface groups of length $2$ with excessive homology. Technically, the induction could start by taking $X(1)$ to be a Riemann surface of genus at least $2$, but for sake of presentation we will give the details of the construction of $X(2)$, the classical Atiyah--Kodaira fibration. Given a Riemann surface of genus at least $2$ we can pass to a regular (unramified) cover of degree two to obtain a Riemann surface $R$ (of genus $g(R)  \geq 3$) admitting a fixed point-free involution $\iota \colon R \to R$. Denote by $f \colon S \to R$  the regular (unramified) cover determined by the (choice of an) epimorphism $\pi_{1}(R) \twoheadrightarrow H_{1}(R;\Z_{2}) \cong \Z_{2}^{2g(R)}$. The union of the graphs $\Gamma_{f} \cup \Gamma_{\iota \circ f} \subset S \times R$ of the maps $f, \iota \circ f \colon S \to R$ is a divisor whose homology class has divisibility $2$.  The double cover $p \colon X \to S \times R$  ramified along $\Gamma_{f} \coprod \Gamma_{\iota \circ f}$ is therefore well--defined, and the composition of $p$ with the projection maps of $S \times R$ to its factors yields two fibrations $\pi_1 \colon X \to S$, $\pi_{2} \colon X \to R$ that have fibers that are ramified covers of $R$ and $S$ respectively, in particular $X$ is doubly--fibered. One can determine (not without some effort) various topological invariants of $X$ in terms of the genus of $R$, but the only two facts of relevance here are that the Euler characteristic of $X$ is nonzero, and that $b_1(X) > \operatorname{max}\{b_1(S),b_1(R)\}$. The latter is a consequence of the fact that any double cover with nonempty ramification locus induces an epimorphism at level of fundamental groups. Denoting by $F$ the double cover of $R$ ramified in $2$ points, which is the fiber of the fibration  $\pi_1 \colon X \to S$, the sequence 
\begin{equation} \label{eq:ak} 1 \longrightarrow \pi_1(F) \longrightarrow \pi_1(X) \longrightarrow \pi_1(S) \longrightarrow 1 \end{equation} has excessive homology. Suitable finite index  (f.i. henceforth) subgroups of $\pi_1(X)$ and $\pi_1(S)$ will correspond, in the notation of Remark \ref{rem}, to the groups $\Pi(2)$ and $\Pi(1)$, and it is known that they admit algebraic fibers with the propertis desired (see \cite{FV21} for the case of $\Pi(2)$). 

In what follows, we will frequently pass to finite index subgroups of a given group, and to avoid to overburden the text, we will maintain when convenient the original notation; note that the intersection of a (sub)normal filtration of a group with one of its finite index subgroups yields a (sub)normal filtration of the latter. Furthermore, we will make repeated use of the following fact: given a short exact sequence $1 \to K \to G \to \Gamma \to 1$ with excessive homology, and a subgroup $G_0 \leq_{f.i.} G$, the induced sequence $1 \to K_0 \to G_0 \to \Gamma_0 \to 1$ will have excessive homology as well. In fact, as the rational homology of a finite index subgroups surjects on that of the group,  the inverse image of any element of $H_1(K;\Q)$ with nontrivial image in $H_{1}(G;\Q)$ will have nonzero image in $H_{1}(G_0;\Q)$.

The Atiyah--Kodaira construction above is the step $m=2$ of the induction process. The inductive step was developed for different purposes and should be credited to \cite{LIP} (see also \cite{Mi}), where it is described in detail; we will limit ourselves to summarizing it in order to prove that the varieties therein defined satisfy the properties relevant to us, referring the reader to \cite[Section 5]{LIP} for justification of the construction.

Assume then that for $0 \leq j \leq m$ we have a collection of complex projective variety $X(m-j)$ with $\operatorname{dim}_{\C}(X(m-j)) = m-j$ with $X(0) = \ast$ such that there exists a smooth fibration (i.e. a holomorphic submersion) $X(m-j)  \to X(m-j -1)$ with fiber an oriented surface of genus at least $2$. Their fundamental groups $\Pi(m-j) := \pi_1(X(m-j))$ are poly-surface groups of length $(m-j)$ and nonzero Euler characteristic related by short exact sequences \[ 1 \to \Gamma_{j+1} \to \Pi(m-j) \to \Pi(m-j-1) \to 1 \] for $0 \leq j \leq m-1$, where the $\Gamma_{i}$ for $1 \leq i \leq m-j$ are factor groups of a normal filtration of $\Pi(m-j)$. Assume furthermore that all these sequences have excessive homology and that for all $0 \leq n \leq m-j-1$, $\Pi(m-j)$ admits a cocylic subgroup of type $F_n$ but not $FP_{n+1}$.

We want to show that there exists a complex projective varieties $X(m+1)$ admitting a smooth fibration $X(m+1) \to X(m)$ with fiber an oriented surface of genus at least $2$ whose fundamental group $\Pi(m+1)$ is a a poly-surface group of length $(m+1)$ given as an extension \[ 1 \to \Gamma_0 \to \Pi(m+1) \to \Pi(m) \to 1 \] with fiber $\Gamma_0$ a surface group, so that the extension has excessive homology. (Note that the ``addition" of one group to a collection as in Remark \ref{rem} entails an extension of each term of the existing normal filtration, hence one new factor group - the extension of the trivial group! - labeled here as $\Gamma_0$, that add to the existing collection of factor groups $\Gamma_{i}$ for $1 \leq i \leq m$.)  Start with the fibration $F \hookrightarrow X(m) \rightarrow X(m-1)$ with associated sequence $1 \to \pi_1(F) \to \pi_1(X(m)) \to \pi_1(X(m-1)) \to 1$ where we $\pi_1(F) = \Gamma_ 1$.
Choose any epimorphism $\pi_1(F) \twoheadrightarrow H_1(F;\Z_{2}) \twoheadrightarrow \Z_2$. Up to going to a f.i. subgroup of $\pi_{1}(X(m-1))$ (and the corresponding f.i. subgroup of $\pi_1(X(m))$) if necessary we can assume (\cite[Proposition 37]{LIP}) the epimorphism $\pi_1(F) \twoheadrightarrow \Z_2$ extends to an epimorphism $\pi_1(X(m)) \twoheadrightarrow \Z_2$ whose associated unramified cover we will denote as $X'(m)$. Denote by $R \hookrightarrow X'(m) \rightarrow X'(m-1)$ the resulting fibration (where $R$ is double cover of $F$ and actually $X'(m-1) = X(m-1)$). By construction $X'(m)$ admits a fixed point-free involution $\iota  \colon X'(m) \to X'(m)$ (given by the $\Z_2$ deck transformation) which restricts fiberwise to an involution of the fiber $R$.  Next, choose an epimorphism $\pi_{1}(R) \twoheadrightarrow H_{1}(R;\Z_{2}) \cong \Z_{2}^{2g(R)}$. Again, up to going to a f.i. subgroup of $\pi_{1}(X'(m-1))$ if necessary we can assume (\cite[Proposition 37]{LIP}) the epimorphism $\pi_1(R) \twoheadrightarrow \Z_2^{2g(R)}$ extends to an epimorphism $\pi_1(X'(m)) \twoheadrightarrow \Z_{2}^{2g(R)}$ whose associated unramified cover we will denote as $X''(m)$. Denote by $S$ the fiber of the induced fibration of $X''(m)$; the base of this fibration can be assumed to be $X'(m-1)$, and the covering map $f \colon X"(m) \to X'(m)$ restricts fiberwise to a covering map $S \to R$ with the same degree. We can now consider the fiber product $X'(m) \times_{X'(m-1)} X''(m)$ (this can also be thought of as pull-back to $X''(m)$ of the fibration $X'(m) \to X'(m-1)$, or viceversa); importantly, it is a complex projective variety that fits in the commutative diagram
\[ \xymatrix@=12pt{ 
  S \times R  \ar@{->>}[d]\ar@{->>}[r] \ar@{^{(}->}@<-3pt>[dr] \hole & \hspace{3pt} S \hole \ar@{^{(}->}@<-2pt>[d] \ar[r]^{\cong} &  \hspace{3pt} S \hole \ar@{^{(}->}@<-2pt>[d]   
\\  R \hole \ar@{^{(}->}@<-1pt>[r] \ar[d]_{\cong} & X'(m) \times_{X'(m-1)} X''(m) \ar@{->>}[d] \ar@{->>}[r] \ar@{->>}@<-3pt>[dr] \hole & X''(m) \ar@{->>}[d]     
\\   R \hole \ar@{^{(}->}@<-1pt>[r] & \hspace{3pt} X'(m) \hole \ar@{->>}[r] & X'(m-1)     } \] 
where all surjective maps are holomorphic fibrations. Similarly, the fundamental groups of the varieties involved are related in the following commutative diagram of short exact sequences, that exhibits the fundamental group of $X'(m) \times_{X'(m-1)} X''(m)$ as fiber product $\pi_1(X'(m)) \times_{\pi_{1}(X(m-1))} \pi_1(X''(m))$ (this affords a direct proof, or can be seen as a consequence of the Mayer-Vietoris fiber sequence) :
\begin{equation} \label{dia:9} \xymatrix@=12pt{ 
  \pi_1(S \times R)  \ar@{->>}[d]\ar@{->>}[r] \ar@{^{(}->}@<-3pt>[dr] \hole & \hspace{3pt} \pi_1(S) \hole \ar@{^{(}->}@<-2pt>[d] \ar[r]^{\cong} &  \hspace{3pt} \pi_1(S) \hole \ar@{^{(}->}@<-2pt>[d]   
\\  \pi_1(R) \hole \ar@{^{(}->}@<-1pt>[r] \ar[d]_{\cong} & \pi_1(X'(m) \times_{X'(m-1)} X''(m))  \ar@{->>}[d] \ar@{->>}[r] \ar@{->>}@<-3pt>[dr] \hole & \pi_1(X''(m)) \ar@{->>}[d]     
\\   \pi_1(R) \hole \ar@{^{(}->}@<-1pt>[r] & \hspace{3pt} \pi_1(X'(m)) \hole \ar@{->>}[r] & \pi_1(X'(m-1))     } \end{equation}
We will (at least implicitly) prove that $X'(m) \times_{X'(m-1)} X''(m)$ satisfy the conclusions of the Theorem, but it has the blemish that its fundamental group would just be a ``trivially extended" version of that of $X(2)$ (as witnessed by the fact that we can endow it with a normal series whose  term $G_2$ is the product $\pi_1(S) \times \pi_1(R)$, and each term of the normal series up the penultimate term $G_{m}$ is a trivial extension (a direct product) of the terms of the normal series for $\pi_1(X''(m))$. In order to avoid this nuisance, we want to perform the Atiyah--Kodaira construction to the fibers of the fibration $X'(m) \times_{X'(m-1)} X''(m) \to X'(m-1)$. In order to do so we proceed as follows. 
Recall that $X'(m)$ admits a fixed point-free involution inducing an involution on the fiber $R$; also, recall that the covering map $f \colon X''(m) \to X'(m)$ restricts fiberwise to a covering map $S \to R$.  The image $\Gamma_{f} \cup \Gamma_{\iota \circ f} \subset X''(m) \to X''(m) \times_{X'(m-1)} X'(m)$ of the sections $(f,1), (\iota \circ f,1) \colon  X''(m) \to X'(m) \times_{X'(m-1)} X''(m)$ is a divisor that restricts, on $S \times R$, to the divisor used in the Atiyah--Kodaira construction. However, as observed in \cite{LIP}, it is not obvious that the construction can be done ``in family" (i.e. over $X'(m-1)$): this may require to  pass once again to a finite cover of $X'(m-1)$ (see \cite[Proposition 41]{LIP}).  Once this step is accomplished, we can take the double cover of (the finite cover of) $X'(m) \times_{X'(m-1)} X''(m)$ ramified over the  (finite cover of) $\Gamma_{f} \cup \Gamma_{\iota \circ f}$ to obtain an complex projective variety $X(m+1)$ with $\operatorname{dim}_{\C}(X(m+1)) = m+1$. It admits a holomorphic submersion $\pi \colon X(m+1) \to X''(m)$ with fiber $F$ which is a double cover of $R$ ramified in $2$ points. Its fundamental group  sits in the commutative diagram, which inherits the second and third line form that in Equation \ref{dia:9}, 
\[ \xymatrix@=12pt{ 
  \pi_1(F)  \ar@{->>}[d]\ar@{^{(}->}[r] \hole & \hspace{3pt} \pi_1(X(m+1)) \hole \ar@{->>}@<-2pt>[d] \ar@{->>}[r] & \hspace{3pt} \pi_1(X''(m)) \hole \ar[d]^{\cong}   
\\  \pi_1(R) \hole \ar@{^{(}->}@<-1pt>[r] \ar[d]_{\cong} & \pi_1(X'(m) \times_{X'(m-1)} X''(m))  \ar@{->>}[d] \ar@{->>}[r]  & \pi_1(X''(m)) \ar@{->>}[d]     
\\   \pi_1(R) \hole \ar@{^{(}->}@<-1pt>[r] & \hspace{3pt} \pi_1(X'(m)) \hole \ar@{->>}[r] & \pi_1(X'(m-1))     } \] 
(Note that only the horizontal sequences are exact.) The vertical maps between the first and second row are epimorphisms induced by the double ramified covers.  As $\pi_1(X''(m))$ has nonzero Euler characteristic, so will $\pi_1(X(m+1))$. The sequence in the third row has excessive homology, as it is (up to passage to finite index subgroups) one of the sequences which do by inductive assumption, hence so does the sequence in the first. 

In summary, after relabeling all the finite covers of the various $X(m-j)$ for $0 \leq j \leq m$, we end up with the candidate family of complex projective varieties, and all that is left is to prove that $\Pi(m+1) := \pi_1(X(m+1))$ has algebraic fibers with the stated properties. (The $\Pi(m-j)$, even if they are finite index subgroups of the original ones, maintain these properties for simple reasons.) The proof at this point is the same as that in Corollary \ref{cor:pbg}: algebraic fibers of $\Pi(m+1)$ of type $F_n$ but not $FP_{n+1}$ for $0 \leq n \leq m-1$ arise as kernels of discrete characters pulled back from $\Pi(m)$, while the algebraic fibers of type $F_{m}$ arise as kernels of a discrete character of the type determined by Theorem  \ref{thm:poly1}. As the Euler characteristic of $\Pi(m+1)$ is nonzero, this algebraic fiber is not $FP_{m+1}$.
\end{proof}

\begin{remark} One may venture  
 and ask whether the only normal $FP_m$-subgroups in poly-free or polysurface groups of length $m$ with nonzero Euler characteristic must have finite index in one element of a subnormal series of some finite-index subgroup. This would be, in a sense, a generalization of the aforementioned f.g.n. property for surface group, and it is known to hold in the case of length $2$ both for poly-free groups (see \cite[Corollaries 8.6 and 8.7]{Bieribook}) and, under the slightly stronger $FP_3$-assumption, for  poly-surface groups (see \cite[Theorem 3.10]{Hibook}).  
\end{remark} 

We want to finish this section on algebraic fibrations of poly-free and poly-surface groups with some comparison with those of (virtually) RFRS groups. This class of groups, which contains the direct products of free and of surface groups, is prone to admit, at least virtually, algebraic fibrations. More precisely, we have the following result of Fisher, that generalizes previous groundbreaking work of Kielak (see \cite{Fisher,Kielak}).
\begin{theorem} (Kielak, Fisher) \label{thm:kf} Let $G$ be a virtually RFRS group of type $FP_n(\Q)$. Then there exists a finite index subgroup of $G$ that admits an algebraic fibration of kernel $FP_n(\Q)$ if and only if the $L^2$-Betti numbers $b_{p}^{(2)}(G)$ vanish for $p = 0,...,n$.  \end{theorem}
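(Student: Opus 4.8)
The plan is to treat the two directions of the equivalence separately. The \emph{necessity} direction --- that a virtual $FP_n(\mathbb{Q})$-fibration forces the $L^2$-Betti numbers to vanish --- is the elementary one. Given a finite-index subgroup $H \leq G$ sitting in a short exact sequence $1 \to K \to H \to \mathbb{Z} \to 1$ with $K$ of type $FP_n(\mathbb{Q})$, I would run the Wang exact sequence of the infinite cyclic cover: for $p \leq n$ the Hilbert $\mathcal{N}(K)$-modules $H_p^{(2)}(K)$ are finitely generated, the generator $t$ of $\mathbb{Z}$ acts on them, and $1 - t_*$ is a weak isomorphism because $\mathbb{Z}$ is infinite, so the long exact sequence yields $b_p^{(2)}(H) = 0$ for $p \leq n$; the restriction formula $b_p^{(2)}(H) = [G:H]\,b_p^{(2)}(G)$ then gives the claim for $G$.

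For the \emph{sufficiency} direction I would first reduce to the case that $G$ is RFRS: a finite-index RFRS subgroup $G_0 \leq G$ is still $FP_n(\mathbb{Q})$ and has $b_p^{(2)}(G_0) = [G:G_0]\,b_p^{(2)}(G) = 0$ for $p \leq n$, and it suffices to fiber a finite-index subgroup of $G_0$. Next I would translate the target into the language of BNSR-invariants: by the $\mathbb{Q}$-coefficient version of Theorem \ref{criterion}, for a discrete integral character $\phi$ of a group $H$ the subgroup $\ker\phi$ is of type $FP_n(\mathbb{Q})$ iff $[\phi], [-\phi] \in \Sigma^n(H;\mathbb{Q})$, and by the Novikov-homology description of $\Sigma^n$ (\cite{S}, \cite[Appendix]{B}) this holds iff $H_p(H;\widehat{\mathbb{Q}H}^{\pm\phi}) = 0$ for all $p \leq n$, where $\widehat{\mathbb{Q}H}^{\psi}$ denotes the Novikov completion of $\mathbb{Q}H$ in the direction $\psi$. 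So the problem becomes: inside an RFRS group $H$ of type $FP_n(\mathbb{Q})$ with vanishing $L^2$-Betti numbers up to degree $n$, find a finite-index subgroup and an integral character killing the Novikov homology in both directions up to degree $n$.

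The core of the argument would then be Kielak's agrarian machinery, in the higher-degree form due to Fisher. For an RFRS group $H$ one has an Ore localization $\mathbb{Q}H \hookrightarrow \mathcal{D}_H$ (a skew field when $H$ is torsion free) through which the $L^2$-Betti numbers factor, $b_p^{(2)}(H) = \dim_{\mathcal{D}_H} H_p(H;\mathcal{D}_H)$, so that the hypothesis says the agrarian chain complex $P_* \otimes_{\mathbb{Q}H} \mathcal{D}_H$ of a partial free resolution $P_*$ of $\mathbb{Q}$ is acyclic in degrees $\leq n$. I would then use the RFRS tower to descend to a finite-index subgroup $H' \leq H$ carrying a \emph{rational} character $\phi$ that is generic with respect to the finitely many Newton polytopes of the boundary maps $\partial_1,\dots,\partial_{n+1}$ and for which the $\phi$-relative (``agrarian'') Betti numbers have dropped to the $L^2$-Betti numbers of $H'$; genericity then forces $\widehat{\mathbb{Q}H'}^{\pm\phi}$-acyclicity in degrees $\leq n$ to coincide with $\mathcal{D}_{H'}$-acyclicity --- this being the agrarian analogue of the openness of $\Sigma$ (Theorem \ref{open}) together with the polytope description of its complement. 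Clearing denominators makes $\phi$ integral, and $\ker\phi$ is the desired algebraic fiber of type $FP_n(\mathbb{Q})$.

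The hard part will be exactly this last descent: producing a \emph{single} finite-index subgroup and a \emph{single} rational character that is simultaneously generic for every boundary map in degrees $\leq n$, works for both signs $\pm\phi$, and witnesses the collapse of the relative Betti numbers onto the $L^2$-Betti numbers. For $n = 1$ this is Kielak's original manipulation of the RFRS chain; the extension to all degrees $\leq n$ is precisely Fisher's theorem and rests on the full agrarian-Betti-number formalism together with an induction along the tower, so at that point I would import the main results of \cite{Kielak} and \cite{Fisher} rather than attempt to reprove them.
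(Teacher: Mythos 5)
The paper does not prove Theorem~\ref{thm:kf}: it is cited as a result of Kielak (the case $n=1$, \cite{Kielak}) and Fisher (general $n$, \cite{Fisher}) and is used only as a point of comparison in the discussion following Corollaries~\ref{cor:pbg} and~\ref{cor:itko}. There is therefore no proof in the paper against which to check your proposal.

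That said, your sketch is a fair summary of the actual proof architecture in the cited sources. The necessity direction is indeed elementary and follows from the behaviour of $L^2$-Betti numbers under extensions by $\Z$ and under passage to finite-index subgroups (the paper itself references \cite[Theorem 7.2]{Luck} for exactly this kind of argument in Proposition~\ref{prop:l2}). Your translation of the sufficiency direction into Novikov homology and the BNSR invariant $\Sigma^n(\cdot;\Q)$ is the right dictionary, and the description of the agrarian/Ore-localization machinery and the role of the RFRS tower is accurate. However, you explicitly concede that the ``hard part'' --- the simultaneous descent to a single finite-index subgroup and a single rational character that is generic for all boundary maps in degrees $\leq n$ and for both signs, with the relative Betti numbers collapsing onto the $L^2$-ones --- is imported wholesale from \cite{Kielak} and \cite{Fisher}. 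Since that descent \emph{is} the theorem, the proposal is an annotated reduction to the cited results rather than an independent proof. For the purpose of this paper, which itself only quotes the theorem, that is exactly the appropriate level of detail; just be aware that the proposal would not constitute a proof in a context where the Kielak--Fisher machinery could not be assumed.
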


We can observe some similarity of this result with those we obtained for poly-free and poly-surface groups (which are $FP$, hence {\it a fortiori} $FP_{n}(\Q)$) especially in light of the following result, that is certainly well-known (see e.g. \cite{Ga}):

\begin{proposition} \label{prop:l2}Let $G$ be a poly-free or a poly-surface group of length $m$ with subnormal filtration $1  = G_0 \unlhd G_1 \unlhd \ldots \unlhd G_m = G$; then for $0 \leq p \leq m-1$ we have $b_{p}^{(2)}(G) = 0$ while $b_{m}^{(2)}(G) = (-1)^{m}\chi(G)$. 
\end{proposition}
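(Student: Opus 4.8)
The plan is to argue by induction on the length $m$, peeling off the top factor. Even when the filtration is only subnormal, $G_{m-1}$ is normal in $G_m=G$, so we always have a short exact sequence $1\to G_{m-1}\to G\to\Gamma_m\to1$ in which $G_{m-1}$ (with the truncated filtration) is a poly-free, resp. poly-surface, group of length $m-1$ and $\Gamma_m$ is a finitely generated free, resp. surface, group; all three groups are of type $FP$, as recalled in the text. Throughout I would use two standard inputs about $L^2$-Betti numbers: that $b_0^{(2)}(H)=0$ for every infinite group $H$, and that for a group of type $FP$ the $L^2$-Euler characteristic agrees with the ordinary one, $\sum_p(-1)^pb_p^{(2)}(G)=\chi(G)$ (see \cite{Ga} and the references therein). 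The base case $m=1$ is the classical computation: for $\Gamma_1$ free of positive rank or a surface group of positive genus one has $b_0^{(2)}(\Gamma_1)=0$ and $b_p^{(2)}(\Gamma_1)=0$ for $p\ge2$ (since $\operatorname{cd}\Gamma_1=1$ in the free case, and by Poincar\'e duality $b_2^{(2)}=b_0^{(2)}=0$ in the surface case), so the Euler characteristic identity gives $b_1^{(2)}(\Gamma_1)=-\chi(\Gamma_1)$.

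For the inductive step, assume the statement for length $m-1$, so that $b_p^{(2)}(G_{m-1})=0$ for $0\le p\le m-2$. I would then invoke the extension principle for $L^2$-Betti numbers: if $1\to N\to E\to Q\to1$ is exact with $N$ and $Q$ of type $FP$, $Q$ infinite, and $b_p^{(2)}(N)=0$ for all $p\le d$, then $b_p^{(2)}(E)=0$ for all $p\le d+1$. Applied with $N=G_{m-1}$, $Q=\Gamma_m$ (infinite, being free of positive rank or a surface group of positive genus) and $d=m-2$, this yields $b_p^{(2)}(G)=0$ for $0\le p\le m-1$. It then remains only to kill the degrees above $m$: in the poly-free case $\operatorname{cd}G=m$, so $b_p^{(2)}(G)=0$ automatically for $p>m$; in the poly-surface case $G$ is a $PD(2m)$ group, hence $b_p^{(2)}(G)=b_{2m-p}^{(2)}(G)$, and for $p>m$ the complementary index $2m-p$ lies in $[0,m-1]$ where vanishing is already established. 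Thus $b_p^{(2)}(G)=0$ for every $p\ne m$, and the Euler characteristic identity collapses to $\chi(G)=(-1)^m b_m^{(2)}(G)$, i.e. $b_m^{(2)}(G)=(-1)^m\chi(G)$, closing the induction.

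The delicate point — and the one I expect to need genuine care — is the ``$+1$'' in the extension principle, i.e. obtaining vanishing of $b_{m-1}^{(2)}(G)$ and not merely of $b_p^{(2)}(G)$ for $p\le m-2$; this is used essentially, since for finite $Q$ the stronger conclusion is false (e.g. for a virtually free group with nonzero rational Euler characteristic). The clean way is via the Lyndon--Hochschild--Serre spectral sequence with coefficients in the group von Neumann algebra, $E^2_{p,q}=H_p\bigl(\Gamma_m;H_q(G_{m-1};\mathcal{N}(G))\bigr)\Rightarrow H_{p+q}(G;\mathcal{N}(G))$: since $\dim_{\mathcal{N}(G)}$ is additive over short exact sequences and nonincreasing under subquotients, $b_n^{(2)}(G)\le\sum_{p+q=n}\dim_{\mathcal{N}(G)}E^2_{p,q}$; the rows with $q\le m-2$ contribute $0$ because $\dim_{\mathcal{N}(G)}H_q(G_{m-1};\mathcal{N}(G))=b_q^{(2)}(G_{m-1})=0$ and $H_p(\Gamma_m;-)$, being computed from a finite free $\Z\Gamma_m$-resolution of $\Z$, takes dimension-zero modules to dimension-zero modules. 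The only term left in total degree $m-1$ is $E^2_{0,m-1}=H_0\bigl(\Gamma_m;H_{m-1}(G_{m-1};\mathcal{N}(G))\bigr)$, and here one uses that $\Gamma_m$ is infinite: choosing $\gamma\in\Gamma_m$ of infinite order, the operator $1-\gamma$ already has image of full von Neumann dimension on the restriction of any $\mathcal{N}(G)$-module to $\langle\gamma\rangle\cong\Z$ (the Fourier-transform computation on $\ell^2\Z$), so the $\Gamma_m$-coinvariants have von Neumann dimension $0$, forcing $b_{m-1}^{(2)}(G)=0$. Alternatively, one may simply quote this extension principle, in the stated degree range, from the literature on $L^2$-invariants.
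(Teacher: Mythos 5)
Your proof follows essentially the same route as the paper: induction on the length of the filtration, applying the extension/vanishing principle for $L^2$-Betti numbers (L\"uck's Theorem 7.2(6) in \cite{Luck}, which is exactly what the paper cites) to the top short exact sequence, then killing the higher degrees via cohomological dimension (poly-free case) or Poincar\'e duality (poly-surface case), and finally reading off $b_m^{(2)}(G)$ from the $L^2$-Euler characteristic. The added spectral-sequence sketch of the extension principle is correct in outline but redundant, since the statement can be (and in the paper is) quoted directly from L\"uck.
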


\begin{proof} We proceed by induction on $1 \leq i \leq m$. The statement is well-known to hold for $G_1$. For each $2 \leq i \leq m$, $G_i$ is an extension of the infinite group $G_{i-1}$ with quotient $G_{i}/G_{i-1}$ that has an element of infinite order. By the inductive hypothesis  we have $b_{p}^{(2)}(G_{i-1}) = 0$ for $0 \leq p \leq i-2$ while $b_{i-1}^{(2)}(G_{i-1}) = (-1)^{i-1}\chi(G_{i-1}) < \infty$, hence we can apply \cite[Theorem 7.2(6)]{Luck} to deduce that for $0 \leq p \leq i-1$ we have $b_{p}^{(2)}(G_{i}) = 0$. For dimensional reasons, supplemented by Poincar\'e duality in the poly-surface case, the only nontrivial $L^2$-Betti number of $G_i$ can therefore only be $b_{i}^{(2)}(G_{i})$. The $L^2$-Euler characteristic of $G_i$, which coincides with $\chi(G_i)$, satisfies the relation \[ \chi(G_i) = \sum_{p} (-1)^p b_{p}^{(2)}(G_{i}) = (-1)^i b_{i}^{(2)}(G_{i}). \] \end{proof}
\begin{remark} There is a little discrepancy between the result of Theorem \ref{thm:poly} and Theorem \ref{thm:kf}, inasmuch as the result of Theorem \ref{thm:poly} yields a slightly stronger homological finiteness (over $\Z$ instead than over $\Q$), but also a slightly weaker obstruction (again over $\Z$ instead than over $\Q$). The latter problem can be quite easily addressed, bringing the obstruction at par by making use of  $L^2$-techniques applied to algebraic fibrations as in \cite[Theorem 7.2(5)]{Luck} 
\end{remark}

In light of Proposition \ref{prop:l2} algebraic fibrations of groups like those covered by Theorem \ref{thm:poly} and Corollaries \ref{cor:pbg} and \ref{cor:itko} seem to have a behavior similar to that of virtually RFRS groups. However, expecting that behavior to hold for all poly-free and poly-surface groups would be quite incorrect, although examples of that are not easy to build. We will at least in part do so, piggybacking on a examples of free-by-free groups with no virtual excessive homology. In \cite{K-V-W} there are examples of semidirect products of the form $K_{m,n}= F_{m} \rtimes F_n$, for any $m \geq 5$ and $n \geq 2$, that have no virtual excessive homology. Their first  BNS invariant $\Sigma^{1}(K_{m,n})$ is empty, as it coincides with that of $F_n$. Direct products of the form $K_{m,n} \times F_{s}$, $s \geq 2$ (which are poly-free groups of length $3$) have the second BNSR invariant $\Sigma^2(K_{m,n} \times F_{s};\Q)$ that can be computed using the product formula (\cite[Theorem 1.3]{B-G},: precisely, we have
\[ S(K_{m,n} \times F_{s}) \setminus \Sigma^2(K_{m,n} \times F_{s};\Q) = \bigcup_{0 \leq p \leq 2} (S(K_{m,n})  \setminus \Sigma^{p}( K_{m,n};Q)) \ast (S(F_{s})   \setminus \Sigma^{2-p}(F_{s};\Q)) =   S( K_{m,n} \times F_{s})  \] hence $\Sigma^2(K_{m,n} \times F_{s};\Q) = \emptyset$, which entails that any algebraic fibrations of $K_{m,n} \times F_{s}$ (whose existence is guaranteed by the product formula for 
$\Sigma^1(K_{m,n} \times F_{s};\Z)$ or, if preferred, by Theorem \ref{Main1}) has kernel that is not $FP_2(\Q)$. This property continues to hold even virtually. In fact, any finite index subgroup of $K_{m,n} \times F_{s}$ admit a finite index subgroup which is direct product of a finite index subgroup of $K_{m,n}$ (and therefore has no virtual excessive homology) and  a finite index subgroup of $F_s$ so that even for these subgroups $\Sigma^{2}$ is empty. Algebraic fibrations pass to finite index subgroups preserving finiteness properties of the kernel, so any virtual algebraic fibration of $K_{m,n} \times F_{s}$ has kernel that is not $FP_2$,  which runs against the behavior of virtually RFRS groups. This entails also that $K_{m,n} \times F_{s}$ is incoherent, with the incoherence witnessed by an algebraic fiber. The fact that $K_{m,n}$ (and any direct product containing it) is incoherent is already known from \cite[Lemma 4.2]{K-W} as its monodromy is not injective, hence it contains a $F_2 \times F_2$ subgroup. This entails that $K_{m,n} \times F_{s}$ fails to be $2$-coherent or homologically $2$-coherent as well.

Examples of this type with poly-surface groups seem harder to obtain: in fact it is still an open problem whether there exist surface-by-surface groups with no virtual excessive homology, see \cite{K-V-W}. 
In principle, if one were capable to gain information of the finiteness properties of (virtual) algebraic fibrations of poly-free and poly-surface groups, one may use Theorem \ref{thm:kf} as an obstruction to virtual RFRSness. In practice, this clashes with the fact that (with the exception of the product case just considered) it seems exceedingly hard to gain that information.

\end{document}